\title{Design Criteria for SGD Preconditioners: Local Conditioning, Noise Floors, and Basin Stability}
\author{\name Mitchell Scott \email mitchell.scott@emory.edu \\
      \addr Department of Mathematics\\
      Emory University
      \AND
      \name Tianshi Xu \email tianshi.xu@emory.edu \\
      \addr Department of Mathematics\\
      Emory University
      \AND
      \name Ziyuan Tang \email tang0389@umn.edu\\
      \addr Department of Computer Science \\
  University of Minnesota Twin Cities
      \AND
      \name Alexandra Pichette-Emmons \email hugolarochelle@google.com\\
      \addr Department of Mathematics\\
  University of Kentucky
  \AND 
  \name Qiang Ye \email qye3@uky.edu\\
  \addr Department of Mathematics\\
  University of Kentucky
  \AND 
  \name Yousef Saad \email saad@umn.edu\\
  \addr Department of Computer Science \\
  University of Minnesota Twin Cities
  \AND 
  \name Yuanzhe Xi \email yuanzhe.xi@emory.edu\\
  \addr Department of Mathematics\\
      Emory University
  }
\newtheorem{theorem}{Theorem}[section]
\newtheorem{corollary}{Corollary}[theorem]
\newtheorem{lemma}[theorem]{Lemma}
\newtheorem{remark}{Remark}
\newtheorem{assumption}{Assumption}
\def\mydefb#1{\expandafter\def\csname bf#1\endcsname{\mathbf{#1}}}
\def\mydefallb#1{\ifx#1\mydefallb\else\mydefb#1\expandafter\mydefallb\fi}
\def\mydefb#1{\expandafter\def\csname cal#1\endcsname{\mathcal{#1}}}
\def\mydefallb#1{\ifx#1\mydefallb\else\mydefb#1\expandafter\mydefallb\fi}
\def\mydefgreek#1{\expandafter\def\csname bf#1\endcsname{\text{\boldmath$\mathbf{\csname #1\endcsname}$}}}
\def\mydefallgreek#1{\ifx\mydefallgreek#1\else\mydefgreek{#1}%
	\lowercase{\mydefgreek{#1}}\expandafter\mydefallgreek\fi}
\def\mydefb#1{\expandafter\def\csname bb#1\endcsname{\mathbb{#1}}}
\def\mydefallb#1{\ifx#1\mydefallb\else\mydefb#1\expandafter\mydefallb\fi}
\newcommand{\bw}{\mathbf{w}}
\begin{document}

\maketitle

\begin{abstract}
Stochastic Gradient Descent (SGD) often slows in the late stage of training due to anisotropic curvature and gradient noise.
We analyze preconditioned SGD in the geometry induced by a symmetric positive definite matrix $\mathbf{M}$.
Our bounds make explicit how both the convergence rate and the stochastic noise floor depend on $\mathbf{M}$.
For nonconvex objectives, we establish a basin-stability guarantee in a local $\mathbf{M}$-metric neighborhood around a minimizer set:
under local smoothness and a local PL condition, we give an explicit lower bound on the probability that the iterates remain in the basin up to a time horizon.
This perspective is particularly relevant in Scientific Machine Learning (SciML), where reaching small training losses under stochastic updates
is closely tied to physical fidelity, numerical stability, and constraint satisfaction.
Our framework covers both diagonal/adaptive and curvature-aware preconditioners and yields a practical criterion:
choose $\mathbf{M}$ to improve local conditioning while attenuating noise in the $\mathbf{M}^{-1}$-norm.
Experiments on a quadratic diagnostic and three SciML benchmarks support the predicted rate--floor behavior.
\end{abstract}

\section{Introduction}
Stochastic Gradient Descent (SGD) has long been the workhorse of large-scale machine learning. Since its early application to multilayer perceptrons in the 1960s \citep{amari_theory_1967}, its simplicity, scalability, and low per-iteration cost have made it a popular optimizer for deep learning models \citep{bottou_optimization_2018}. Classical convergence theory for SGD under noisy gradients typically guarantees a sublinear rate of \( \mathcal{O}(1/k) \) under convexity and smoothness assumptions \citep{robbins_stochastic_1951, blum_approximation_1954}. The theory for SGD convergence under various combinations of conditions is well studied and documented in ~\citet{garrigos_handbook_2024,khaled_better_2022}, and \citet{Bach2024}. 

Recent theoretical developments have established \emph{linear convergence} for SGD under stronger conditions, such as strong convexity, smoothness, and bounded noise~\citep{bottou_optimization_2018}. When the loss \( F \) is \( c \)-strongly convex, has \( L \)-Lipschitz gradients, and the learning rate \( \alpha \) satisfies \( \alpha \leq \mu / (L K_G) \), the iterates \( \bfw_k \) satisfy
\begin{equation}
\label{eq:linear-noise-floor}
\mathbb{E}[F(\bfw_k) - F_\ast]
\leq
(1 - \alpha c \mu)^{k-1}
\left(F(\bfw_1) - F_\ast - \frac{\alpha L K}{2 c \mu}\right)
+ \frac{\alpha L K}{2 c \mu},
\end{equation}
where \( \mu \), \( K \), and \( K_G \) are constants associated with the stochastic gradients (defined in Assumptions~\ref{ass:1}–\ref{ass:3}), and let $\bfw^\ast$ denote the unique minimizer and $F_\ast := F(\bfw^\ast)$ the optimal value. Eq.~\eqref{eq:linear-noise-floor} highlights two late-stage drivers: a linear contraction
factor $1-\alpha c\mu$ and a stochastic error floor
\[
\frac{\alpha L K}{2c\mu} \;=\; \frac{\alpha}{2\mu}\,\kappa\,K,
\]
where $\kappa := \frac{L}{c}$ is the (Euclidean)  condition number associated with curvature. For any admissible $\alpha$, the floor
scales with $\kappa$ and $K$, while the contraction depends on the product $\alpha c\mu$.

Many successful optimizers can be viewed as \emph{preconditioned variants of SGD}.
Adaptive methods such as Adagrad~\citep{duchi_adaptive_2011}, Adam~\citep{kingma_adam_2017}, and RMSProp~\citep{hinton_neural_2014},
structured second-order approaches including Shampoo~\citep{gupta_shampoo_2018}, natural gradient descent~\citep{amari_natural_1998}, K-FAC~\citep{martens_optimizing_2015,ishikawa2024on}, and Sophia~\citep{liu_sophia_2023},
as well as quasi-Newton methods like L-BFGS~\citep{liu_limited_1989, chen_largeScale_2014},
all apply a linear transformation to the gradient that reshapes both curvature and gradient noise.
From this perspective, their empirical effectiveness indicates that late-stage optimization is influenced not only by the choice of learning rates,
but also by how the preconditioning alters local conditioning and the geometry of stochastic noise.
Despite their widespread use, however, there is still no unified theoretical framework that identifies {which properties of a preconditioner determine the late-stage convergence rate and the attainable noise floor}.

Motivated by this perspective, we study the preconditioned SGD update in the following form
\begin{align}
\bfw_{k+1} &= \bfw_k - \alpha_k \bfM^{-1} g(\bfw_k,\bfxi_k), \label{alg:PSGD}
\end{align}
where $\bfM\succ 0$ is a symmetric positive definite (SPD) matrix that defines the geometry in which both curvature and noise are measured, \(g(\bfw_k,\bfxi_k) = \nabla_w F_k(\bfw)\) is the stochastic gradient, \(\alpha_k\) is the learning rate, \(\bfxi_k\) is an i.i.d.\ sample drawn at iteration \(k\). The standard (vanilla) SGD update is recovered when \(\bfM = \mathbf I\). Our goal is not to propose a new optimizer, but to provide a principled framework to {analyze and compare} preconditioners in the late stage of training.

\paragraph{Main contributions}
We investigate how preconditioning influences the late-stage behavior of SGD within a well-behaved basin of the loss surface. 
By analyzing preconditioned SGD in the $\bfM$-induced geometry, we show how rescaling the gradient affects
both the convergence rate and the attainable noise floor, and we derive criteria that clarify which properties of a
preconditioner matter in the late stage of training.
\begin{enumerate}
\item \textbf{Preconditioned SGD in the strongly convex baseline.}
We extend the classical ``linear rate + noise floor'' theory for SGD to updates preconditioned by a fixed SPD
matrix $\bfM$.
The resulting bounds show that late-stage behavior is controlled by (i) an effective conditioning in the
$\bfM$-geometry and (ii) the preconditioned gradient-noise level; the attainable error floor scales with their
\emph{product}.
Since admissible constant stepsizes are limited by $\bfM$-smoothness, improved conditioning allows larger
stepsizes and hence faster contraction. With diminishing stepsizes, we obtain an $\mathcal O(1/k)$ rate.
\item \textbf{Local nonconvex regime with basin stability.}
Under a local $\bfM$--PL condition and local smoothness, we establish late-stage convergence guarantees inside a
well-behaved basin around a minimizer set, again with an explicit rate--floor structure.
In addition, we provide a basin-stability bound that lower-bounds the probability of remaining in the basin
up to a horizon.
\item \textbf{Design criteria and empirical evidence.}
Our theory yields a simple design principle: choose $\bfM$ to improve local conditioning while attenuating noise
in the $\bfM^{-1}$-norm; the attainable late-stage floor tracks their product.
We validate this mechanism on (i) a quadratic diagnostic where the relevant constants can be computed in closed form,
and (ii) three SciML benchmarks where late-stage behavior is strongly tied to final accuracy.
\end{enumerate}     

While late-stage convergence is broadly relevant, it is especially important in SciML. Here, training losses encode physically meaningful quantities (e.g., PDE residuals, boundary conditions, stability). Unlike standard ML tasks where moderate error may still be acceptable, small reductions in the final loss can determine whether solutions conserve invariants, remain stable over long horizons, or meet scientific accuracy requirements. In this setting, the optimizer’s asymptotic behavior—and particularly the final noise floor—directly governs physical fidelity~\citep{rathore_challenges_2024}.

\section{Related work}\label{sec:rw}

Recent work has advanced the theoretical understanding of preconditioned and adaptive variants of SGD under various structural and noise assumptions. \citet{koren2022benign} showed that preconditioned SGD achieves a rate of $\mathcal{O}(1/\sqrt{k})$ for general stochastic convex optimization, though convergence can stagnate in the presence of persistent gradient noise. \citet{faw2022power} further established that adaptive SGD attains an order-optimal $\tilde{\mathcal{O}}(1/\sqrt{k})$ rate for nonconvex smooth objectives under affine variance conditions, without requiring bounded gradients or finely tuned learning rates. More recently, \citet{attia2023sgd} derived high-probability guarantees of $\tilde{\mathcal{O}}(1/k + \sigma_0/\sqrt{k})$ for adaptive methods in both convex and nonconvex settings, relaxing the need for strong smoothness or prior parameter knowledge.

These results primarily address \emph{global} convergence behavior across general problem classes. In contrast, our analysis focuses on the \emph{asymptotic regime}—the late stage of training where iterates lie within a well-behaved basin around a local minimizer and optimization progress is limited by curvature anisotropy and gradient noise. In this regime, we show that both the convergence rate and the noise floor of the preconditioned SGD are determined by curvature and variance quantities measured in the preconditioned geometry. This local, geometry-aware viewpoint clarifies why curvature-informed preconditioners and adaptive algorithms yield faster and more stable late-stage convergence. 

Other techniques such as batch normalization~\citep{lange2022batch} and weight decay~\citep{loshchilov2017decoupled, barrett2020implicit} can also be interpreted as implicit forms of preconditioning, though they operate through different regularization mechanisms. For comprehensive surveys of explicit preconditioned SGD and related adaptive methods, we refer the reader to~\citet{ye_preconditioning_2024}.
Beyond convergence rates, preconditioning has also been studied as an implicit regularization that may affect generalization \citep{amari2021when}. Our paper, however, focuses on optimization and training loss rather than test error or generalization. This emphasis is deliberate in many SciML problems, where the training objective often directly measures physically meaningful quantities such as PDE residuals and boundary-condition violations, so driving the training loss low is itself important. At the same time, preconditioning changes the optimization trajectory and therefore the algorithm's implicit bias, so it may also affect generalization. Understanding how the metric-dependent quantities in our analysis interact with out-of-sample accuracy is an important direction for future work.

\section{Preconditioned SGD convergence analysis}\label{sec:Theory}

We first analyze the globally strongly convex case as a \emph{baseline} to make the role of the preconditioned geometry explicit. Although this setting is rarely realized in deep learning, it reveals the essential mechanism through which preconditioning affects convergence. The analysis shows how curvature and noise floor transform under a change of metric, providing a principled way to compare different choices of \(\bfM\). This also lays the groundwork for the local nonconvex analysis in Section~\ref{sec:local}, where \(\bfM\) influences both basin size and stability.

\subsection{Convergence in the globally strongly convex setting}
\label{sec:Global}

We establish convergence guarantees for preconditioned SGD when the objective is globally strongly convex. This simplified setting allows for a transparent analysis of how a preconditioner reshapes both the effective curvature and the gradient noise. While the derivations parallel the Euclidean case, expressing them in the $\bfM$-induced geometry makes the dependence on the preconditioner explicit and lays the groundwork for the more general nonconvex results to follow.

\paragraph{Curvature assumptions.}
Preconditioning redefines smoothness and strong convexity through effective constants
$(\hat L,\hat c)$ measured in the $\bfM$–induced norm.

\begin{assumption}[$\bfM$-strong convexity]\label{assumption:44}
$F\colon\mathbb R^d\!\to\!\mathbb R$ is $\bfM$-strongly convex: there exists $\hat c>0$ such that
\[
  F(\overline{\bfw})\ge F(\bfw)
  +\nabla F(\bfw)^\top(\overline{\bfw}-\bfw)
  +\tfrac12\,\hat c\,\|\overline{\bfw}-\bfw\|_{\bfM}^2,
  \quad \forall\,\overline{\bfw},\bfw\in\mathbb R^d.
\]
\end{assumption}

\begin{assumption}[$\bfM$-Lipschitz gradient]\label{assumption:55}
$\nabla F$ is $\bfM$-Lipschitz with constant $\hat L>0$:
\[
  \|\nabla F(\overline{\bfw})-\nabla F(\bfw)\|_{\bfM^{-1}}
  \le \hat L\,\|\overline{\bfw}-\bfw\|_{\bfM},
  \quad \forall\,\overline{\bfw},\bfw\in\mathbb R^d.
\]
\end{assumption}

These conditions are direct analogues of the Euclidean definitions.  
Writing $\bfM^{-1}=\bfP\bfP^\top$ gives the spectral characterization:

\begin{lemma}\label{lemma:bound}
Let $F$ be twice differentiable and $\bfM^{-1}=\bfP\bfP^\top$. Then:
(i) $\nabla F$ is $\bfM$-Lipschitz with constant $\hat L$  
$\iff$ all eigenvalues of $\bfP^\top\nabla^2F(\bfw)\bfP$ are $\le \hat L$;
(ii) $F$ is $\bfM$-strongly convex with constant $\hat c$  
$\iff$ all eigenvalues of $\bfP^\top\nabla^2F(\bfw)\bfP$ are $\ge \hat c$.
\end{lemma}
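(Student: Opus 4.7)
The plan is to reduce both (i) and (ii) to the classical Euclidean equivalences between Lipschitz continuity of $\nabla F$, strong convexity of $F$, and spectral bounds on $\nabla^2 F$, via a linear change of variables induced by $\bfP$. Since $\bfM^{-1}=\bfP\bfP^\top$ with $\bfP$ invertible, I would introduce $\bfu=\bfP^{-1}\bfw$ and set $\tilde F(\bfu):=F(\bfP\bfu)$. The central identities are that $\|\bfP^{-1}\bfx\|^2=\bfx^\top\bfM\bfx$ and $\|\bfP^\top\bfy\|^2=\bfy^\top\bfM^{-1}\bfy$, so the $\bfM$-norm on the primal side and the $\bfM^{-1}$-norm on the dual side both become the Euclidean norm in the $\bfu$-coordinates.

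Next I would transport the derivatives through this change of variables. A direct computation gives
\[
\nabla \tilde F(\bfu)=\bfP^\top \nabla F(\bfP\bfu), \qquad
\nabla^2 \tilde F(\bfu)=\bfP^\top \nabla^2 F(\bfP\bfu)\,\bfP.
\]
Then for any $\bfw,\overline{\bfw}$ with images $\bfu,\overline{\bfu}$, the four quantities in Assumptions~\ref{assumption:44}–\ref{assumption:55} line up cleanly: $\|\nabla F(\overline{\bfw})-\nabla F(\bfw)\|_{\bfM^{-1}}=\|\nabla\tilde F(\overline{\bfu})-\nabla\tilde F(\bfu)\|$, $\|\overline{\bfw}-\bfw\|_{\bfM}=\|\overline{\bfu}-\bfu\|$, $F(\bfw)=\tilde F(\bfu)$, and $\nabla F(\bfw)^\top(\overline{\bfw}-\bfw)=\nabla\tilde F(\bfu)^\top(\overline{\bfu}-\bfu)$. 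Hence Assumption~\ref{assumption:55} for $F$ with constant $\hat L$ is equivalent to the standard Euclidean $\hat L$-Lipschitz continuity of $\nabla\tilde F$, and Assumption~\ref{assumption:44} for $F$ with constant $\hat c$ is equivalent to the standard Euclidean $\hat c$-strong convexity of $\tilde F$.

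Finally I would invoke the textbook Hessian characterizations in Euclidean coordinates: under twice differentiability, $\nabla\tilde F$ is $\hat L$-Lipschitz iff $\lambda_{\max}(\nabla^2\tilde F(\bfu))\le\hat L$ for all $\bfu$, and $\tilde F$ is $\hat c$-strongly convex iff $\lambda_{\min}(\nabla^2\tilde F(\bfu))\ge\hat c$ for all $\bfu$ (one direction via the Taylor remainder / fundamental theorem of calculus, the converse via the directional-difference limit definition of the Hessian). Substituting $\nabla^2\tilde F(\bfu)=\bfP^\top\nabla^2 F(\bfw)\bfP$ yields (i) and (ii).

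There is no real obstacle here; the only point that deserves attention is bookkeeping in the change of variables, specifically verifying that $\bfP^{-\top}\bfP^{-1}=\bfM$ (equivalent to the hypothesis $\bfM^{-1}=\bfP\bfP^\top$) so that the $\bfM$- and $\bfM^{-1}$-norms really do pull back to Euclidean norms under $\bfu=\bfP^{-1}\bfw$ and $\bfz\mapsto\bfP^\top\bfz$, respectively. Once this is in place, everything else is a direct transcription of the Euclidean theory.
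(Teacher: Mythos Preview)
Your proposal is correct and follows essentially the same approach as the paper: both introduce the change of variables $\bfw=\bfP\bfu$, transfer the $\bfM$- and $\bfM^{-1}$-norms to Euclidean norms, compute $\nabla^2\tilde F(\bfu)=\bfP^\top\nabla^2 F(\bfw)\bfP$, and then invoke the standard Euclidean Hessian characterizations of Lipschitz gradients and strong convexity. Your write-up is in fact somewhat more explicit about the norm identities than the paper's own proof.
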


Hence, preconditioning improves the effective condition number whenever $\hat L/\hat c < L/c$.

\paragraph{Noise assumptions.}
We measure the first and second moments of the stochastic gradient in the $\bfM^{-1}$–norm. Specifically, holding $\bw_k$ fixed, we define the variance with respect to the sampling of $\bfxi_k$ by
\begin{equation}\label{var2}
\mathbb{V}_{\bfxi_k}\!\big[g(\bfw_k,\bfxi_k),\|\cdot\|_{\bfM^{-1}}\big]
  := \mathbb E_{\bfxi_k}\!\left[\|g(\bfw_k,\bfxi_k)\|_{\bfM^{-1}}^2\right]
   - \big\|\mathbb E_{\bfxi_k}[g(\bfw_k,\bfxi_k)]\big\|_{\bfM^{-1}}^2.
\end{equation}

\begin{assumption}[Moment bounds in $\bfM^{-1}$]\label{assumption:66}
For the iterates of~\eqref{alg:PSGD}, there exist constants $\mu_G\!\ge\!\mu\!>\!0$, $K\!\ge\!0$, and $K_V\!\ge\!0$ such that, for all $k$,
\begin{align}
\langle \nabla F(\bw_k),\, \mathbb E_{\bfxi_k}[g(\bw_k,\bfxi_k)] \rangle_{\bfM^{-1}}
  &\ge \mu\,\|\nabla F(\bw_k)\|_{\bfM^{-1}}^2, \label{eq:constmu}\\
\|\mathbb E_{\bfxi_k}[g(\bw_k,\bfxi_k)]\|_{\bfM^{-1}}
  &\le \mu_G\,\|\nabla F(\bw_k)\|_{\bfM^{-1}}, \label{eqn:constmuG}\\
\mathbb{V}_{\bfxi_k}\!\big[g(\bw_k,\bfxi_k),\|\cdot\|_{\bfM^{-1}}\big]
  &\le K + K_V\,\|\nabla F(\bw_k)\|_{\bfM^{-1}}^2. \label{eqn:varGrad}
\end{align}
\end{assumption}

We call $K$ the \emph{preconditioned noise level} because the variance in the
$\bfM^{-1}$–norm satisfies
\[
  \mathbb{V}_{\bfxi}\big[g(\bw,\bfxi),\|\cdot\|_{\bfM^{-1}}\big]
  \;=\; \mathrm{tr}\!\big(\bfM^{-1}\boldsymbol{\Sigma}(\bw)\big),
\]
where $\boldsymbol{\Sigma}(\bw) := \mathrm{Cov}(g(\bw,\bfxi)\mid \bw)$. In the stationary
case $\boldsymbol{\Sigma}(\bw)\equiv\boldsymbol{\Sigma}$, we have the fixed
$\mathrm{tr}(\bfM^{-1}\boldsymbol{\Sigma})$. More generally, on a region
containing the iterates it is natural to choose
$K \ge \sup_{\bw}\mathrm{tr}(\bfM^{-1}\boldsymbol{\Sigma}(\bw))$, so $K$ is a uniform
baseline for the preconditioned noise.

Under these assumptions we obtain the usual linear and sublinear rates, but with constants that depend explicitly on the preconditioned geometry.

\begin{theorem}\label{1stmainthm}
Under Assumptions \ref{assumption:44}–\ref{assumption:66} (with $F_{\min}=F_\ast$), suppose \eqref{alg:PSGD} uses a fixed learning rate $\alpha_k=\overline\alpha$ with
\[
  0<\overline\alpha \;\le\; \frac{\mu}{\hat L\,K_G}
  \qquad\text{where }~ K_G=K_V+\mu_G^2\ge \mu^2>0.
\]
Then, for all $k\in\mathbb N$,
\begin{align}\label{eqn:1stOptGap}
\mathbb E\!\left[F(\bfw_k)-F_\ast\right]
\;\le\;
\frac{\overline\alpha\,\hat L\,K}{2\,\hat c\,\mu}
\;+\;
(1-\overline\alpha\,\hat c\,\mu)^{k-1}
\left(\,F(\bfw_1)-F_\ast - \frac{\overline\alpha\,\hat L\,K}{2\,\hat c\,\mu}\right)
\;\xrightarrow{k\to\infty}\; \frac{\overline\alpha\,\hat L\,K}{2\,\hat c\,\mu}.
\end{align}
\end{theorem}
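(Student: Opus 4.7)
The plan is to follow the Bottou--Curtis--Nocedal template for linear convergence under strong convexity, but carried out entirely in the $\bfM$--induced geometry so that the dual pairing between the $\bfM$--norm (for displacements) and the $\bfM^{-1}$--norm (for gradients) does the bookkeeping automatically. The starting point is a one--step descent lemma obtained from Assumption~\ref{assumption:55}: by integrating the $\bfM$--Lipschitz gradient condition one gets
\[
F(\bfw_{k+1})\le F(\bfw_k)+\nabla F(\bfw_k)^\top(\bfw_{k+1}-\bfw_k)+\tfrac{\hat L}{2}\|\bfw_{k+1}-\bfw_k\|_{\bfM}^{2}.
\]
Plugging in the update $\bfw_{k+1}-\bfw_k=-\overline\alpha\,\bfM^{-1}g(\bfw_k,\bfxi_k)$ and using the key identity $\|\bfM^{-1}g\|_{\bfM}^{2}=g^\top\bfM^{-1}g=\|g\|_{\bfM^{-1}}^{2}$ converts the quadratic term into a clean $\bfM^{-1}$--norm. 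The cross term likewise becomes $\langle\nabla F(\bfw_k),g\rangle_{\bfM^{-1}}$, which is exactly the quantity controlled by \eqref{eq:constmu}.

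Next I would take a conditional expectation in $\bfxi_k$, apply \eqref{eq:constmu} to the first--order term and the identity $\mathbb E[\|g\|_{\bfM^{-1}}^{2}]=\mathbb V_{\bfxi_k}[g,\|\cdot\|_{\bfM^{-1}}]+\|\mathbb E[g]\|_{\bfM^{-1}}^{2}$ together with \eqref{eqn:constmuG}--\eqref{eqn:varGrad} to the second--order term, obtaining
\[
\mathbb E_{\bfxi_k}[F(\bfw_{k+1})]-F(\bfw_k)\le -\overline\alpha\Bigl(\mu-\tfrac{\hat L\overline\alpha K_G}{2}\Bigr)\|\nabla F(\bfw_k)\|_{\bfM^{-1}}^{2}+\tfrac{\hat L\,\overline\alpha^{2}\,K}{2}.
\]
The step--size restriction $\overline\alpha\le\mu/(\hat L K_G)$ guarantees $\mu-\tfrac{\hat L\overline\alpha K_G}{2}\ge\mu/2$, so the gradient coefficient is at least $\overline\alpha\mu/2$ and remains negative.

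To turn the gradient norm into a function gap I would derive the $\bfM$--analogue of the standard PL inequality by minimizing the quadratic lower bound from Assumption~\ref{assumption:44} over $\overline\bfw$, which yields $F(\bfw_k)-F_\ast\le \tfrac{1}{2\hat c}\|\nabla F(\bfw_k)\|_{\bfM^{-1}}^{2}$. Substituting and taking total expectations gives the affine recursion
\[
\mathbb E[F(\bfw_{k+1})-F_\ast]\le (1-\overline\alpha\,\hat c\,\mu)\,\mathbb E[F(\bfw_k)-F_\ast]+\tfrac{\hat L\,\overline\alpha^{2}\,K}{2}.
\]
Finally I would unroll this recursion around its fixed point $A^{*}=\tfrac{\overline\alpha\hat L K}{2\hat c\mu}$: writing $A_k-A^{*}\le(1-\overline\alpha\hat c\mu)(A_{k-1}-A^{*})$ and iterating delivers the claimed bound \eqref{eqn:1stOptGap}, with the $k\to\infty$ limit following from $1-\overline\alpha\hat c\mu\in(0,1)$.

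The only place requiring care is bookkeeping the norms: one must verify that $\bfM^{-1}$ is the right norm for gradients, that $\|\bfM^{-1}g\|_{\bfM}=\|g\|_{\bfM^{-1}}$, and that $\mu^{2}\le\mu_G^{2}\le K_G$ so the admissible step is nonempty and the contraction factor lies in $(0,1)$; these are exactly the preconditioned analogues of the Euclidean sanity checks, so once the geometry is set up correctly the rest is algebraic. I do not anticipate a substantive obstacle beyond this careful translation.
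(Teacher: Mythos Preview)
Your proposal is correct and follows essentially the same route as the paper: the paper packages your one–step descent, conditional moment bound, and affine recursion into three lemmas (Lemma~\ref{lemma:1}, Lemma~\ref{lemma:2}, Lemma~\ref{lemma:3}) and then the theorem proof subtracts the fixed point $\tfrac{\overline\alpha\hat L K}{2\hat c\mu}$ and iterates exactly as you describe. The norm bookkeeping you flag ($\|\bfM^{-1}g\|_{\bfM}=\|g\|_{\bfM^{-1}}$, $K_G\ge\mu^2$, $\hat c\le\hat L$) is handled in the same way.
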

Theorem~\ref{1stmainthm} shows that, with a fixed learning rate $\overline\alpha$,
preconditioned SGD contracts linearly with factor $1-\overline\alpha\,\hat c\,\mu$ and
converges to an asymptotic floor
\[
  \frac{\overline\alpha\,\hat L\,K}{2\,\hat c\,\mu}
  \;=\; \frac{\overline\alpha}{2\mu}\;\Big(\frac{\hat L}{\hat c}\Big)\;K.
\]
Thus, the floor factorizes into an \emph{effective condition number} $\hat L/\hat c$ and a
\emph{preconditioned noise level} $K$.  In the late stage of training, we have
$F(\bw_k)-F_\ast = \mathcal O(\overline\alpha K)$ and $\|\nabla F(\bw_k)\|_{\bfM^{-1}}^2 = \mathcal O(\overline\alpha K)$.
Substituting into the variance bound~\eqref{eqn:varGrad} gives
\[
  \mathbb{V}_{\bfxi_k}\!\big[g(\bw_k,\bfxi_k),\|\cdot\|_{\bfM^{-1}}\big]
  \;\le\; K + \mathcal O(\overline\alpha K),
\]
so for small $\overline\alpha$ the variance is dominated by the baseline
$K$ term.  

Moreover, since
$\mathbb{V}_{\bfxi}[g(\bw,\bfxi),\|\cdot\|_{\bfM^{-1}}]
= \mathrm{tr}(\bfM^{-1}\bfSigma(\bw))$, we may view $K$ as an upper
baseline for the preconditioned noise
$\mathrm{tr}(\bfM^{-1}\bfSigma(\bw))$ along the late–stage trajectory.
Preconditioning reduces this baseline  through its effect on
$\mathrm{tr}(\bfM^{-1}\bfSigma(\bw))$; choosing $\bfM$ to attenuate
high–variance directions lowers this trace and thus lowers the
effective noise floor.

\begin{theorem}\label{2ndmainthm}
Under Assumptions \ref{assumption:44}–\ref{assumption:66} (with $F_{\min} = F_\ast$), suppose \eqref{alg:PSGD} uses $\alpha_k=\beta/(\gamma+k)$ with $\beta>\frac{1}{\hat c\,\mu}$ and $\gamma>0$ chosen so that $\alpha_1\le \mu/(\hat L K_G)$. Then, for all $k\in\mathbb N$,
\begin{equation}\label{eq:13}
\mathbb E\!\left[F(\bfw_k)-F_\ast\right] \;\le\; \frac{\nu}{\gamma+k},
\qquad
\nu := \max\left\{ \frac{\beta^2 \hat L K}{2(\beta \hat c \mu-1)},\, (\gamma+1)\bigl(F(\bfw_1)-F_\ast\bigr) \right\}.
\end{equation}
\end{theorem}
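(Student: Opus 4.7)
The proof will proceed by induction on $k$, following the familiar template for SGD with diminishing step sizes, but with all constants reinterpreted in the $\bfM$–geometry via Assumptions~\ref{assumption:44}–\ref{assumption:66}.

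First I would record the one-step recursion that the proof of Theorem~\ref{1stmainthm} already relies on. Combining the $\bfM$–smoothness descent inequality with~\eqref{eq:constmu}–\eqref{eqn:varGrad} and taking conditional expectations in $\bfxi_k$, one obtains
\begin{equation*}
\mathbb E\!\left[F(\bfw_{k+1})-F_\ast\right]
\;\le\;
\bigl(1-\alpha_k\,\hat c\,\mu\bigr)\,\mathbb E\!\left[F(\bfw_k)-F_\ast\right]
\;+\;\tfrac{1}{2}\,\alpha_k^{2}\,\hat L\,K,
\end{equation*}
valid whenever $\alpha_k\le \mu/(\hat L K_G)$. The condition $\alpha_1\le \mu/(\hat L K_G)$ together with the monotone decrease of $\alpha_k=\beta/(\gamma+k)$ guarantees this bound at every iteration.

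Next I would run the induction. The base case $k=1$ is immediate from the choice $\nu\ge(\gamma+1)(F(\bfw_1)-F_\ast)$. For the inductive step, assume $\mathbb E[F(\bfw_k)-F_\ast]\le \nu/(\gamma+k)$ and substitute $\alpha_k=\beta/(\gamma+k)$ in the one-step recursion to get
\begin{equation*}
\mathbb E\!\left[F(\bfw_{k+1})-F_\ast\right]
\;\le\;
\frac{(\gamma+k-1)\,\nu}{(\gamma+k)^{2}}
\;-\;\frac{\nu(\beta\hat c\mu-1)-\tfrac{1}{2}\beta^{2}\hat L K}{(\gamma+k)^{2}}.
\end{equation*}
The assumption $\beta>1/(\hat c\mu)$ makes $\beta\hat c\mu-1>0$, so the second-term bracket is nonnegative precisely when $\nu\ge \beta^{2}\hat L K/\bigl(2(\beta\hat c\mu-1)\bigr)$; this is the other half of the max in the definition of $\nu$.

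Finally, the elementary identity $(\gamma+k-1)(\gamma+k+1)=(\gamma+k)^{2}-1\le(\gamma+k)^{2}$ yields $(\gamma+k-1)/(\gamma+k)^{2}\le 1/(\gamma+k+1)$, which upgrades the displayed inequality to $\mathbb E[F(\bfw_{k+1})-F_\ast]\le \nu/(\gamma+k+1)$ and closes the induction. The main obstacle, really the only delicate point, is lining up the two clauses in the definition of $\nu$ with the two distinct roles they play: the first controls the noise contribution in the inductive step and requires $\beta\hat c\mu>1$, while the second anchors the base case; once both are in hand, the algebraic step collapses to the telescoping identity above. The quantitative effect of preconditioning enters only through $\hat L$, $\hat c$, and $K$, matching the interpretation already developed after Theorem~\ref{1stmainthm}.
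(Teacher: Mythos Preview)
Your proposal is correct and follows essentially the same approach as the paper: establish the one-step recursion $\mathbb E[F(\bfw_{k+1})-F_\ast]\le(1-\alpha_k\hat c\mu)\mathbb E[F(\bfw_k)-F_\ast]+\tfrac12\alpha_k^2\hat L K$ (the paper packages this as Lemma~\ref{lemma:3}), then run the same induction with the same algebraic split and the identity $(\gamma+k-1)(\gamma+k+1)\le(\gamma+k)^2$. The two roles of the clauses in $\nu$ that you highlight are exactly how the paper uses them.
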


With diminishing learning rates, the noise floor vanishes and
Theorem~\ref{2ndmainthm} shows that preconditioned SGD attains the optimal
$\mathcal{O}(1/k)$ rate.  Preconditioning no longer changes the rate itself—it
always decays like $1/k$—but it directly influences the leading constant $\nu$
which has the same structure as the fixed-learning-rate floor: an effective
condition number $\hat L/\hat c$ multiplied by the preconditioned noise level
$K$. Thus even when the noise floor disappears, late–stage performance is still
governed by the same metric–dependent quantities $(\hat L,\hat c,K)$. Consequently, effective preconditioners must again balance curvature alignment
(to reduce $\hat L/\hat c$) with noise attenuation (to reduce $K$), improving
both the asymptotic constants in the $\mathcal{O}(1/k)$ regime.

\subsection{Local convergence in the nonconvex setting}
\label{sec:local}
The empirical loss $F(\bw)$ over network parameters is typically \emph{nonconvex}, and its local geometry near minimizers is rarely strictly convex. Empirical studies show that trained models often converge to regions that are flat in many directions and exhibit highly degenerate curvature—manifested as a cluster of very small or near-zero eigenvalues in the Hessian—arising from overparameterization, symmetries, and parameter non-identifiability~\citep{sagun2018empirical,pmlr-v97-ghorbani19b}. Despite this degeneracy, the optimization dynamics remain structured: iterates contract along directions with significant curvature while the loss changes little along flat directions. To describe this late-stage regime without assuming strong convexity, we impose a \emph{local Polyak--Łojasiewicz (PL)} condition~\citep{CHAN197931,10.1007/978-3-319-46128-1_50} in the $\bfM$–geometry, which enforces gradient domination only in informative directions and tolerates flat or weakly curved subspaces. This flat-tolerant formulation provides a natural framework to study how preconditioning reshapes local curvature and noise, governing contraction rates, asymptotic error floors, and stability during the final phase of optimization.

\paragraph{Additional local assumptions.}
Fix an SPD matrix $\bfM$ and an open neighborhood $\mathcal U\subset\mathbb R^d$.
Assume the local minimizer set
\[
  \mathcal S \;:=\; \arg\min_{\bfw\in\mathcal U} F(\bfw) \;\neq\; \varnothing,
  \qquad
  F_\ast \;:=\; \min_{\bfw\in\mathcal U} F(\bfw) \;=\; F(\bfs)\ \ \text{for any }\bfs\in\mathcal S.
\]
Write $\|x\|_{\bfM}:=(x^\top\bfM x)^{1/2}$ and
$\mathrm{dist}_{\bfM}(\bfw,\mathcal S):=\inf_{\bfs\in\mathcal S}\|\bfw-\bfs\|_{\bfM}$.
For radii $0<r<r_+$, define the $\bfM$–metric neighborhoods
\[
  \mathcal N_r \;:=\; \{\bfw:\ \mathrm{dist}_{\bfM}(\bfw,\mathcal S)\le r\},
  \qquad
  \mathcal N_{r_+} \;:=\; \{\bfw:\ \mathrm{dist}_{\bfM}(\bfw,\mathcal S)\le r_+\}\subseteq\mathcal U.
\]

We assume the following conditions hold on $\mathcal N_r$ (for the iterates) and on $\mathcal N_{r_+}$ (for the exit bound).

\begin{assumption}[Local $\bfM$–PL on $\mathcal N_r$]\label{assumption:localPL}
There exists $\hat\mu_{\mathrm{PL}}>0$ such that, for all $\bfw\in\mathcal N_r$,
\[
  2\hat\mu_{\mathrm{PL}}\big(F(\bfw)-F_\ast\big) \;\le\; \|\nabla F(\bfw)\|_{\bfM^{-1}}^2.
\]
\end{assumption}

\begin{assumption}[Local $\bfM$–Lipschitz gradient on a convex neighborhood of $\mathcal N_{r_+}$]\label{assumption:localL-PL}
There exists an open \emph{convex} set $\mathcal V$ with $\mathcal N_{r_+}\subset \mathcal V \subseteq \mathcal U$ and a constant $\hat L>0$ such that, for all $\overline{\bfw},\bfw\in\mathcal V$,
\[
  \|\nabla F(\overline{\bfw})-\nabla F(\bfw)\|_{\bfM^{-1}}
  \ \le\ \hat L\,\|\overline{\bfw}-\bfw\|_{\bfM}.
\]
\end{assumption}

\begin{assumption}[Local stochastic gradient conditions on $\mathcal N_r$]\label{assumption:localNoise-PL}
Let $(\mathcal F_k)$ denote the natural filtration and set $g_k:=g(\bw_k,\boldsymbol\xi_k)$.
There exist constants $\mu\in(0,1]$, $K_G\ge0$, and $K\ge0$ such that, for every $k$ with $\bw_k\in\mathcal N_r$,
\[
\big\langle \nabla F(\bw_k),\, \mathbb E[g_k\mid \mathcal F_k]\big\rangle_{\bfM^{-1}}
  \ \ge\ \mu\,\|\nabla F(\bw_k)\|_{\bfM^{-1}}^2,
\qquad
\mathbb E\!\left[\|g_k\|_{\bfM^{-1}}^2\mid \mathcal F_k\right]
  \ \le\ K_G\,\|\nabla F(\bw_k)\|_{\bfM^{-1}}^2 + K.
\]
\end{assumption}

\begin{assumption}[Local quadratic growth (QG) on $\mathcal N_{r_+}$]\label{assumption:localQG}
There exists $\alpha_{\rm QG}>0$ such that, for all $\bfw\in\mathcal N_{r_+}$,
\[
  F(\bfw)-F_\ast \ \ge\ \tfrac{\alpha_{\rm QG}}{2}\,\mathrm{dist}_{\bfM}(\bfw,\mathcal S)^2.
\]
\end{assumption}
\begin{assumption}[Controlled one-step overshoot on $\mathcal N_r$]\label{assumption:onestep}
Fix radii $0<r<r_+$ and a horizon $T\ge 1$, and set $\Delta := r_+-r$.
There exist deterministic numbers $(\delta_k)_{k=1}^{T-1}$ with $\delta_k\in[0,1)$ such that for every $k\le T-1$,
\[
\mathbf 1_{\{\bw_k\in\mathcal N_r\}}\,
\alpha_k^{2}\,\mathbb E\!\left[\|g_k\|_{\bfM^{-1}}^2\mid \mathcal F_k\right]
\ \le\ \delta_k\,\Delta^2
\qquad\text{a.s.}
\]
\end{assumption}

Lemma~\ref{lem:onestep} gives the one-step containment probability implied by Assumption~\ref{assumption:onestep}.

\begin{lemma}[Containment probability implied by Assumption~\ref{assumption:onestep}]
\label{lem:onestep}
Under Assumption~\ref{assumption:onestep}, for every $k\le T-1$,
\[
\bw_k\in\mathcal N_r
\quad\Longrightarrow\quad
\mathbb P\!\left(\bw_{k+1}\in\mathcal N_{r_+}\mid \mathcal F_k\right)\ \ge\ 1-\delta_k.
\]
\end{lemma}

These local assumptions are the basin--restricted analogue of the global conditions in
Section~\ref{sec:Global}. The local $\bfM$--PL condition replaces global strong convexity by a
\emph{gradient--domination} inequality in the $\bfM$--metric: it enforces curvature only in directions
that drive descent while permitting flat or weakly curved directions.
The local $\bfM$--Lipschitz gradient assumption on a convex neighborhood
$\mathcal V\supset \mathcal N_{r_+}$ provides a quadratic upper model along any update segment that stays in $\mathcal V$:
\[
  F(\overline{\bf w}) \;\le\; F(\bfw)
  + \nabla F(\bfw)^{\!\top}(\overline{\bfw}-\bfw)
  + \tfrac{\hat{L}}{2}\,\|\overline{\bfw}-\bfw\|_{\bfM}^2.
\]
In our finite-horizon analysis, this condition is invoked only on trajectories for which the iterates (and hence the
corresponding update segments, by convexity) remain inside $\mathcal V$ up to time $T$.

The local stochastic gradient condition (Assumption~\ref{assumption:localNoise-PL}) mirrors the global moment bounds in
Assumption~\ref{assumption:66}, but is only required to hold when $\bw_k\in\mathcal N_r$.
It imposes a first-moment alignment condition and a \emph{second-moment} bound in the $\bfM^{-1}$--norm, which is the natural scale
for preconditioned updates. The local QG condition ensures that the objective grows at least quadratically with
$\mathrm{dist}_{\bfM}(\bw,\mathcal S)$ near the basin boundary---a property that holds, for example, when curvature is positive in
normal directions---and it supplies the barrier needed in the optional-stopping/exit-time argument.

Assumption~\ref{assumption:onestep} controls rare one-step {overshoots} from the inner basin
$\mathcal N_r$ to outside the enlarged neighborhood $\mathcal N_{r_+}$.
When $\bw_k\in\mathcal N_r$, the preconditioned update moves a distance
\[
\|\bw_{k+1}-\bw_k\|_{\bfM}=\alpha_k\,\|g_k\|_{\bfM^{-1}}.
\]
Since $\mathrm{dist}_{\bfM}(\bw_k,\mathcal S)\le r$ on $\mathcal N_r$, the triangle inequality implies that
$\bw_{k+1}\notin\mathcal N_{r_+}$ can occur only if $\alpha_k\|g_k\|_{\bfM^{-1}}>\Delta$ with $\Delta:=r_+-r$.
Assumption~\ref{assumption:onestep} bounds the conditional second moment of $\|g_k\|_{\bfM^{-1}}$ relative to $\Delta$; therefore,
by Markov’s inequality,
\[
\mathbb P(\bw_{k+1}\notin\mathcal N_{r_+}\mid\mathcal F_k)\le \delta_k
\qquad\text{whenever }\bw_k\in\mathcal N_r.
\]
Together, these assumptions describe a local regime that accommodates moderate nonconvexity and flatness while still providing
sufficient structure for quantitative finite-horizon convergence and stability guarantees under stochastic gradients.
\begin{theorem}[Convergence in a local basin up to a finite horizon]\label{thm:3rdmainthm}
Fix radii $0<r<r_+$ and a horizon $T\ge 1$, and let
\[
\tau := \inf\{k\ge 1:\bw_k\notin\mathcal N_r\},\qquad \Omega_T:=\{\tau>T\}.
\]
Assume:
(i) Assumptions~\ref{assumption:localPL} and \ref{assumption:localNoise-PL} hold on $\mathcal N_r$;
(ii) Assumption~\ref{assumption:localL-PL} holds on a convex set $\mathcal V$ with
$\mathcal N_{r_+}\subset \mathcal V \subseteq \mathcal U$;
(iii) Assumption~\ref{assumption:localQG} holds on $\mathcal N_{r_+}$;
(iv) Assumption~\ref{assumption:onestep} holds with horizon $T$ and failure probabilities $(\delta_k)_{k=1}^{T-1}$;
and (v) the conditional-moment version of Assumption~\ref{assumption:localNoise-PL} holds on $\Omega_T$
(i.e., the first/second-moment bounds are valid when conditioning on $(\mathcal F_k,\Omega_T)$ for $k\le T-1$).

Suppose $\bw_1\in\mathcal N_r$ and use a constant stepsize $\alpha_k=\overline\alpha$ such that
\[
0<\overline\alpha \le \frac{\mu}{\hat L K_G}\quad(\text{if }K_G>0), 
\qquad\text{and}\qquad
0<\overline\alpha < \frac{1}{\mu\hat\mu_{\mathrm{PL}}}.
\]
Define
\[
\rho := \overline\alpha\,\hat\mu_{\mathrm{PL}}\,\mu \in(0,1),
\qquad
C := \frac{\overline\alpha\,\hat L\,K}{2\,\hat\mu_{\mathrm{PL}}\,\mu},
\qquad
B := \frac{\alpha_{\rm QG}}{2}\,r^2.
\]

For all $1\le k\le T$,
\[
\mathbb E\!\left[F(\bw_k)-F_\ast \,\middle|\, \tau>T\right]
\le
C+(1-\rho)^{k-1}\bigl(F(\bw_1)-F_\ast - C\bigr).
\]

The probability of remaining in $\mathcal N_r$ up to time $T$ satisfies
\[
\mathbb P(\tau>T)
\ \ge\
\left[
1-\frac{F(\bw_1)-F_\ast + \frac{\hat L}{2}\overline\alpha^2 K\,(T-1)}{B}
-\sum_{k=1}^{T-1}\delta_k
\right]_+,
\]
where $[x]_+:=\max\{0,x\}$.
\end{theorem}

\begin{theorem}[Diminishing learning rate in a local basin up to a finite horizon]
\label{thm:4thmainthm}
Fix radii $0<r<r_+$ and a horizon $T\ge 1$, and let
\[
\tau := \inf\{k\ge 1:\bw_k\notin\mathcal N_r\},\qquad \Omega_T:=\{\tau>T\}.
\]
Assume:
(i) Assumptions~\ref{assumption:localPL} and \ref{assumption:localNoise-PL} hold on $\mathcal N_r$;
(ii) Assumption~\ref{assumption:localL-PL} holds on a convex set $\mathcal V$ with
$\mathcal N_{r_+}\subset \mathcal V \subseteq \mathcal U$;
(iii) Assumption~\ref{assumption:localQG} holds on $\mathcal N_{r_+}$;
(iv) Assumption~\ref{assumption:onestep} holds with horizon $T$ and failure probabilities $(\delta_k)_{k=1}^{T-1}$;
and (v) the conditional-moment version of Assumption~\ref{assumption:localNoise-PL} holds on $\Omega_T$.

Suppose $\bw_1\in\mathcal N_r$ and use harmonic stepsizes
\[
\alpha_k=\frac{\beta}{\gamma+k},\qquad \gamma>0,
\]
with
\[
0<\alpha_1=\frac{\beta}{\gamma+1}\ \le\ \frac{\mu}{\hat L K_G}\quad(\text{if }K_G>0),
\qquad\text{and}\qquad
\beta>\frac{1}{\mu\hat\mu_{\mathrm{PL}}}\ \ (\text{equivalently }a:=\beta\mu\hat\mu_{\mathrm{PL}}>1).
\]
Define
\[
m:=\mu\hat\mu_{\mathrm{PL}},\qquad
c:=\frac{\hat L K}{2},\qquad
B:=\frac{\alpha_{\rm QG}}{2}\,r^2,
\qquad
\nu:=\max\!\left\{\frac{c\beta^2}{\beta m-1},\;(\gamma+1)\bigl[F(\bw_1)-F_\ast\bigr]\right\}.
\]

For all $1\le k\le T$,
\[
\mathbb E\!\left[F(\bw_k)-F_\ast \,\middle|\, \tau>T\right]
\le
\frac{\nu}{\gamma+k}.
\]

The probability of remaining in $\mathcal N_r$ up to time $T$ satisfies
\[
\mathbb P(\tau>T)
\ \ge\
\left[
1-\frac{F(\bw_1)-F_\ast + c\sum_{k=1}^{T-1}\alpha_k^2}{B}
-\sum_{k=1}^{T-1}\delta_k
\right]_+,
\]
where $[x]_+:=\max\{0,x\}$.
\end{theorem}

Theorem~\ref{thm:3rdmainthm} (fixed stepsize) and Theorem~\ref{thm:4thmainthm} (harmonic stepsizes)
characterize late-stage optimization \emph{after} the iterates have entered a well-behaved local basin
$\mathcal N_r$.
Both results are stated on the finite-horizon survival event
\[
\Omega_T:=\{\tau>T\},\qquad \tau:=\inf\{k\ge1:\bw_k\notin\mathcal N_r\},
\]
so that along $\Omega_T$ the local $\bfM$--smoothness and local $\bfM$--PL inequalities apply to the entire
trajectory up to time $T$ and yield explicit descent recursions.
With a constant stepsize $\overline\alpha$, Theorem~\ref{thm:3rdmainthm} gives conditional geometric contraction
to the noise floor $C=\frac{\overline\alpha\,\hat L\,K}{2\hat\mu_{\mathrm{PL}}\mu}$,
whereas with harmonic stepsizes $\alpha_k=\beta/(\gamma+k)$, Theorem~\ref{thm:4thmainthm} yields the conditional
$\mathcal O(1/k)$ rate.
In both cases, the constants are \emph{local} and expressed in the $\bfM$--geometry. Unlike global strongly convex analyses, no global curvature or global variance control is required; the bounds
depend only on the basin actually explored by the iterates.

The basin-stability guarantees are also local, and they make two distinct failure mechanisms explicit.
The first is an objective barrier controlled by the local QG constant
$\alpha_{\rm QG}$ and the basin radius $r$ through
\[
B:=\frac{\alpha_{\rm QG}}{2}\,r^2,
\]
which quantifies the minimum objective increase needed to reach the boundary $\mathcal N_{r_+}\setminus\mathcal N_r$.
The second is {one-step overshoot}: Assumption~\ref{assumption:onestep} allows rare updates that jump from
$\mathcal N_r$ to outside the enlarged neighborhood $\mathcal N_{r_+}$, with conditional failure probabilities
$\delta_k$. Here,  $\sum_{k=1}^{T-1}\delta_k$ quantifies the accumulated overshoot risk: if the tails/second moments are large,
or if the basin margin $\Delta=r_+-r$ is small, then $\delta_k$ may be large, and the stability bound becomes
conservative.

Because all constants in the local bounds are $\bfM$--dependent, a well-chosen preconditioner $\bfM$ can improve late-stage behavior by:
(i) enhancing local conditioning (increasing $\hat\mu_{\mathrm{PL}}$ and/or decreasing $\hat L$, thereby strengthening contraction);
(ii) reducing the preconditioned noise level $K$; and (iii) improving stability by reducing the overshoot probabilities
$\delta_k$ (e.g., via smaller $\mathbb E[\|g_k\|_{\bfM^{-1}}^2]$ and/or a larger margin $\Delta=r_+-r$) and, when aligned with
normal-space curvature, by increasing the barrier parameter $B=\tfrac{\alpha_{\rm QG}}{2}r^2$.

\subsection{Practical preconditioners for SGD}\label{sec:precond}

A wide range of preconditioning strategies are used in modern machine learning.
On the first–order side, adaptive methods such as Adam~\citep{kingma_adam_2017}, AMSGrad~\citep{j.2018on},
PAdam~\citep{ijcai2020p452}, and Yogi~\citep{NEURIPS2018_90365351} implicitly apply \emph{diagonal}
preconditioners by rescaling gradients with running estimates of coordinatewise second moments.
On the second–order side, \emph{curvature‑aware} preconditioners exploit Hessian or Fisher Information Matrix (FIM) structure,
including the empirical FIM~\citep{schraudolph_fast_2002}, full or mini‑batch Hessians~\citep{fletcher_practical_2013,garg_secondOrder_2024},
mini‑batch quasi‑Newton updates~\citep{griffin_minibatch_2022}, and Kronecker‑factored FIM (K‑FAC)~\citep{martens_optimizing_2015}.
Classical schemes such as L‑BFGS~\citep{liu_limited_1989,chen_largeScale_2014} can also be viewed as low‑rank, history‑based preconditioners. A particularly important special case is \emph{natural gradient descent}, obtained by choosing \(\bfM\) as the Fisher information matrix \citep{amari_natural_1998}. In that case, the update follows the local information geometry of the model rather than the Euclidean geometry of parameter space. For exponential-family and least-squares settings, the Fisher matrix is closely related to, and in many cases coincides with, the generalized Gauss--Newton matrix \citep{schraudolph_fast_2002,martens_new_2020}. Our present theory treats a fixed SPD metric \(\bfM\), so it does not directly analyze the fully time-varying choice \(\bfM_k=\bfF(\bw_k)\). Nevertheless, it provides a local lens on natural-gradient-type methods.Appendix~\ref{appendix:preconditioner} summarizes these approaches and their computational trade‑offs.

The convergence analysis in Sections~\ref{sec:Global}–\ref{sec:local} suggests two practical mechanisms through which preconditioners
shape late‑stage behavior:
\begin{itemize}
\item \emph{Local conditioning.}
Curvature‑aware preconditioners (e.g., Fisher, Gauss–Newton, Hessian, K‑FAC) tend to reduce the metric–smoothness constant \(\hat L\)
and can increase the local PL constant \(\hat\mu_{\mathrm{PL}}\).
In our bounds, this improves the effective local condition number \(\hat L/\hat\mu_{\mathrm{PL}}\),
permits larger admissible fixed learning rates \(\alpha \le \mu/(\hat L K_G)\),
and reduces the leading constant under diminishing learning rates.
\item \emph{Noise attenuation.}
Preconditioners aligned with the gradient‑noise structure reduce the preconditioned noise level $K$ in the late‑stage regime.
Together with improved conditioning (smaller \(\hat L/\hat c\) or \(\hat L/\hat\mu_{\rm PL}\)),
this lowers the noise floor, which scales with their \emph{product}.
Fisher‑based and related methods are especially effective because they explicitly incorporate gradient statistics.
\end{itemize}

These two mechanisms—improved conditioning and reduced preconditioned noise—match the behavior observed in
Section~\ref{sec:experiments}. Curvature‑matched preconditioners (Fisher, Gauss–Newton, K‑FAC, Hessian) typically yield
faster late‑stage contraction by reducing \(\hat L\) and, in some cases, increasing \(\hat\mu_{\mathrm{PL}}\), while their use of
gradient second‑moment information tends to reduce \(K\).
Adaptive/diagonal methods likewise lower \(K\) by damping high‑variance coordinates, though their alignment with curvature is typically weaker. Recent theory further suggests that in anisotropic settings, Kronecker-structured preconditioning can be statistically necessary for efficient feature learning, whereas entry-wise/diagonal scaling offers only partial improvements \citep{zhang2025on}.

\section{Numerical results}\label{sec:experiments}

Many machine-learning benchmarks illustrate the benefits of preconditioned SGD 
(e.g., \citet{schmidt_descending_2021,schneider_deepobs_2019}), but our emphasis is on 
SciML, where driving the loss to very small values is 
tightly linked to physical fidelity, numerical stability, and constraint satisfaction.
We therefore structure the experiments in two parts.  

First, we analyze a \emph{diagnostic quadratic model} in which all the quantities in our 
theory---$\hat L$, $\hat\mu_{\mathrm{PL}}$, and the preconditioned 
noise level $K$---admit closed forms.  
This allows us to directly compute the geometry– and noise–dependent metrics from Sections~\ref{sec:Global}–\ref{sec:local} and verify their influence on rate and floor.  

Second, we examine three representative SciML problems: noisy Franke surface regression 
\citep{franke_critical_1979}, a Poisson–type PINN, and Green’s–function learning for 
diffusion and convection–diffusion \citep{zhang2024federatedscientificmachinelearning,rathore_challenges_2024,hao2024multiscaleneuralnetworksapproximating,xu2025neuralapproximateinversepreconditioners}, 
to see how the theoretical mechanisms are reflected in practical settings.

\subsection{Diagnostic quadratic model}
\label{sec:diagnostic-quadratic}

To isolate the effects predicted by the theory, we consider the quadratic objective
\[
F(\bw)=\tfrac12(\bw-\bw^\ast)^\top \mathbf H\,(\bw-\bw^\ast)+F_\ast,
\qquad \mathbf H\succeq 0,
\]
here $\mathbf H$ specifies curvature. We test two simple, analytically tractable choices: Euclidean SGD ($\mathbf M=\mathbf I$) and a low-rank curvature-aware preconditioner $\mathbf M = \mathbf I + \mathbf U_s(\mathbf{\widetilde{\Lambda}}_s-\mathbf I)\mathbf U_s^\top$,
where $\mathbf U_s$ contains the top (or bottom) $s$ eigenvectors of $\mathbf H$ and $\mathbf{\widetilde{\Lambda}}_s$ is a diagonal matrix.
This model captures the essential effect of curvature information.
We used a fixed learning rate. 

Instead of forming a dataset, we synthesize unbiased mini–batch gradients
\[
g_k = \nabla F(\bw_k) + \zeta_k,\qquad 
\mathbb E[\zeta_k]=0,\quad
\mathrm{Cov}(\zeta_k)=\frac{1}{B}\boldsymbol\Sigma.
\]
To match the second-order statistics of least-squares problems near $\bw^\ast$, we set 
$\boldsymbol\Sigma=\sigma^2\mathbf H$, giving $
K = \frac{\sigma^2}{B}\,\mathrm{tr}(\mathbf M^{-1}\mathbf H)$. We choose $d=100$ and construct $\mathbf H=\mathbf U\mathbf \Lambda \mathbf U^\top$ with 
$\mathbf \Lambda$ log-uniform grid on $[10^{-2},10^{2}]$ and $\mathbf U$ Haar-distributed.
We set $\bw^\ast=0$, $F_\ast=0$, and initialize $\bw_1\sim\mathcal N(0,10^{-4}\mathbf I)$, and report averages over 30 runs. To illustrate how individual eigenvalues affect constants $(\hat L, \hat\mu_{\mathrm{PL}}, K)$, we design three groups of tests targeting different part of the spectrum of $\mathbf H$.


\begin{figure}[htbp]
    \centering
    \includegraphics[width=0.975\linewidth]{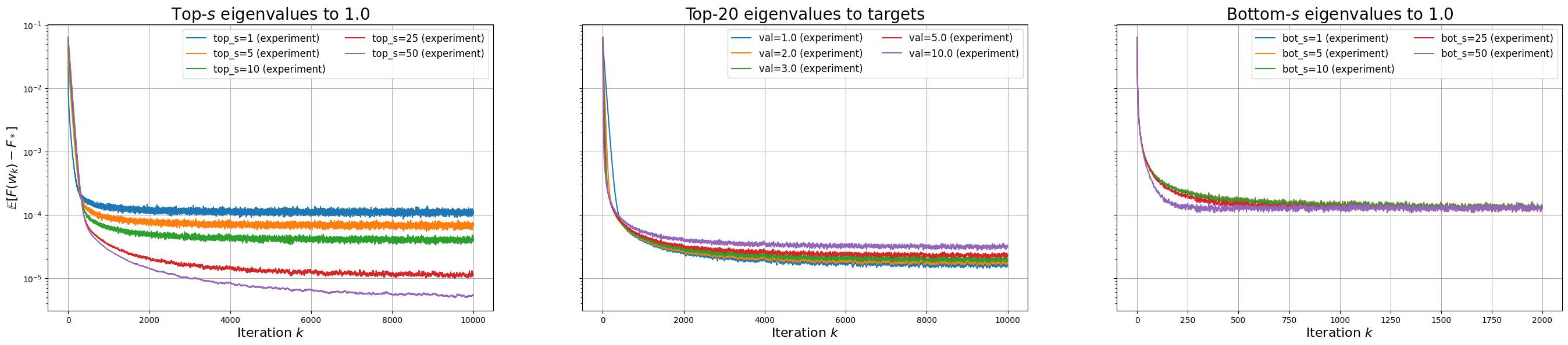}
    \caption{Convergence behavior under different deflation-based preconditioners.  
    Left: deflating the largest $s$ eigenvalues ($s\in\{1,5,10,25,50\}$). 
    Middle: deflating the top $20$ eigenvalues to target values $1.0,2.0,3.0,5.0,10.0]$. 
    Right: deflating the smallest $s$ eigenvalues ($s\in\{1,5,10,25,50\}$).}
    \label{fig:bound}
\end{figure}

Figure~\ref{fig:bound} shows how deflating different parts of the spectrum of $\mathbf H$ affects the key theoretical constants. Denote the eigenpairs of $\mathbf H$ as $(\lambda_i,\mathbf u_i)$, and let $\mathbf U_s$ contain the selected eigenvectors. We construct a spectral preconditioner of the form $\mathbf M=\mathbf I+\mathbf U_s(\mathbf{\widetilde{\Lambda}}_s-\mathbf I)\mathbf U_s^\top$, where $\mathbf{\widetilde{\Lambda}}_s=\mathrm{diag}(\tau_1,\ldots,\tau_s)$ assigns a target value $\tau_i$ to the $i$-th chosen eigendirection. Deflating the largest $s$ eigenvalues (left panel)---i.e., setting $\tau_i=\lambda_i$ so that these preconditioned eigenvalues become~$1$---reduces the smoothness constant $\hat L$ and the noise level $K=\tfrac{\sigma^2}{B}\mathrm{tr}(\mathbf M^{-1}\mathbf H)$ while leaving $\hat\mu_{\mathrm{PL}}$ unchanged, yielding a lower noise floor.

{To isolate the effect of the noise term, the middle panel fixes $\hat\mu_{\mathrm{PL}}$. It deflates the top $20$ eigenvalues into a common value $v$ lying between $\lambda_{21}$ and $\lambda_d$ by setting $\tau_i=\lambda_i/v$, so that $\hat L$ and $\hat\mu_{\mathrm{PL}}$ remain unchanged while $K$ varies. The resulting steady-state losses track this change in $K$, in line with the predicted noise-floor scaling.} Deflating the smallest $s$ eigenvalues (right panel)—that is, selecting the bottom eigenvectors and assigning target values $\tau_i$ equal to these smallest eigenvalues so that the preconditioned eigenvalues $\lambda_i/\tau_i$ move to~$1$—does increase $\hat\mu_{\mathrm{PL}}$, but it simultaneously enlarges $K$. The two effects counterbalance each other, yielding only modest overall gains, consistent with the predicted noise-floor behavior.

\subsection{SciML problems}
\label{sec:datasets}

We then briefly summarize the three SciML tasks used to evaluate late–stage optimization behavior under 
different preconditioners.

\paragraph{Noisy Franke surface regression.}
The Franke function is a classical multiscale benchmark consisting of several Gaussian peaks 
with heterogeneous length scales. We sample $256$ points uniformly in $[0,1]^2$ and corrupt 
the values with Gaussian noise $\varepsilon\sim\mathcal N(0,10^{-4})$. The combination of 
multiscale structure and observational noise yields a loss landscape with varying curvature, 
making it well suited for evaluating how preconditioning affects convergence in practice. 
The target surface is
\[
f(x,y)=0.75e^{-\frac{(9x-2)^2+(9y-2)^2}{4}}
+0.75e^{-\frac{(9x+1)^2}{49}-\frac{9y+1}{10}}
+0.5e^{-\frac{(9x-7)^2+(9y-3)^2}{4}}
-0.2e^{-(9x-4)^2-(9y-7)^2}.
\]

\paragraph{Physics–informed neural networks (PINNs).}
We train a PINN to solve the 2D Poisson problem
\[
-\Delta u = f(x,y)=8\pi^2\sin(2\pi x)\sin(2\pi y)\quad\text{in }(0,1)^2,\qquad
u=0\ \text{on }\partial[0,1]^2,
\]
whose exact solution is $u(x,y)=\sin(2\pi x)\sin(2\pi y)$.  
The training set includes $1{,}000$ interior residual points and $200$ boundary points.  
The weighted loss (PDE residual weight $1.0$, boundary weight $100.0$) produces a 
challenging composite landscape known to stress first–order methods \citep{krishnapriyan_characterizing_2021}. The right panel of Fig.~\ref{fig:pinnFinal} visualizes the source term $f(x,y)$.

\paragraph{Green’s–function learning.}
We learn Green’s functions for the 1D convection–diffusion operator
\[
\mathcal L u := -\nu u'' + \beta u',\qquad u(0)=u(1)=0,
\]
under two regimes: (i) diffusion‐dominated ($\nu=1.0,\beta=0$) and 
(ii) convection‐dominated ($\nu=0.1,\beta=1.0$).  
The Green’s function satisfies $\mathcal L G(x,y)=\delta(x-y)$, where we approximate 
the delta distribution by a narrow Gaussian with width $\sigma=0.01$.  
Training uses:  
(a) $1{,}000$ uniformly sampled $(x,y)$ pairs for PDE residuals,  
(b) $500$ near-diagonal samples ($|x-y|$ small) to capture the near-singularity, and  
(c) $200$ boundary samples.  
This produces a highly multiscale and stiffness–dominated optimization problem, 
ideal for testing curvature-aware preconditioners.

\paragraph{Baselines and protocol.}
Across all SciML tasks, we compare vanilla SGD, momentum, Adam, L–BFGS, and 
curvature-aware preconditioners (CG–Hessian and CG–GGN/Fisher).  
Matrix–free CG with a fixed iteration budget is used to apply Hessian or Gauss–Newton/Fisher 
updates.  
Following standard SciML practice, we adopt a two-phase schedule: Phase~I uses Adam to reach a comparable local basin; Phase~II switches to the target optimizer to isolate late-stage behavior. Because our nonconvex theory is local, the basin reached at the end of Phase~I can influence the local constants \((\hat L,\hat\mu_{\mathrm{PL}},K)\) encountered in Phase~II and hence may affect which optimizer performs best after the switch. We therefore use the same Adam warm start, switch point, architecture, and seed protocol across all methods to control for basin selection and interpret the Phase~II results as comparisons conditional on entering a comparable basin rather than fully basin-agnostic rankings. For all loss-versus-epoch and loss-versus-time plots, we report the \emph{relative training loss}, i.e., the task-specific training objective normalized so that the first plotted epoch has value \(1.0\). Thus values below \(1\) indicate reduction relative to the initial training loss. We report loss vs.\ epochs and wall–clock time, with all 
architectural and implementation details in Appendix~\ref{appendix:experiments}.  
All implementations use \textsc{JAX}~\citep{jax}; code and data are available in the supplemental material.

\subsection{Noisy data regression}

\begin{figure}[tbp]
  \begin{minipage}{0.25\linewidth}
    \centering
    \includegraphics[width=\linewidth]{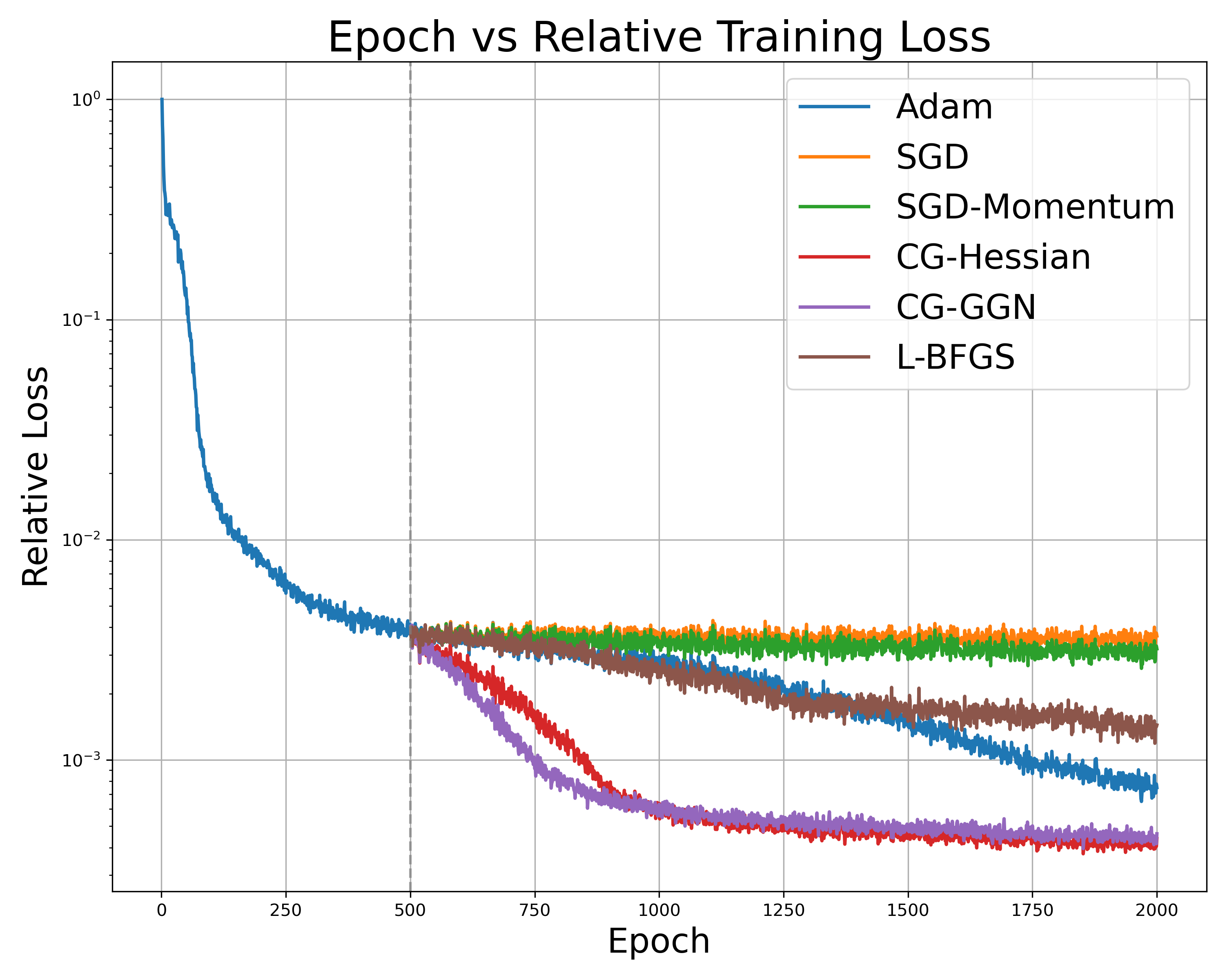}
  \end{minipage}\hfill
  \begin{minipage}{0.25\linewidth}
    \centering
    \includegraphics[width=\linewidth]{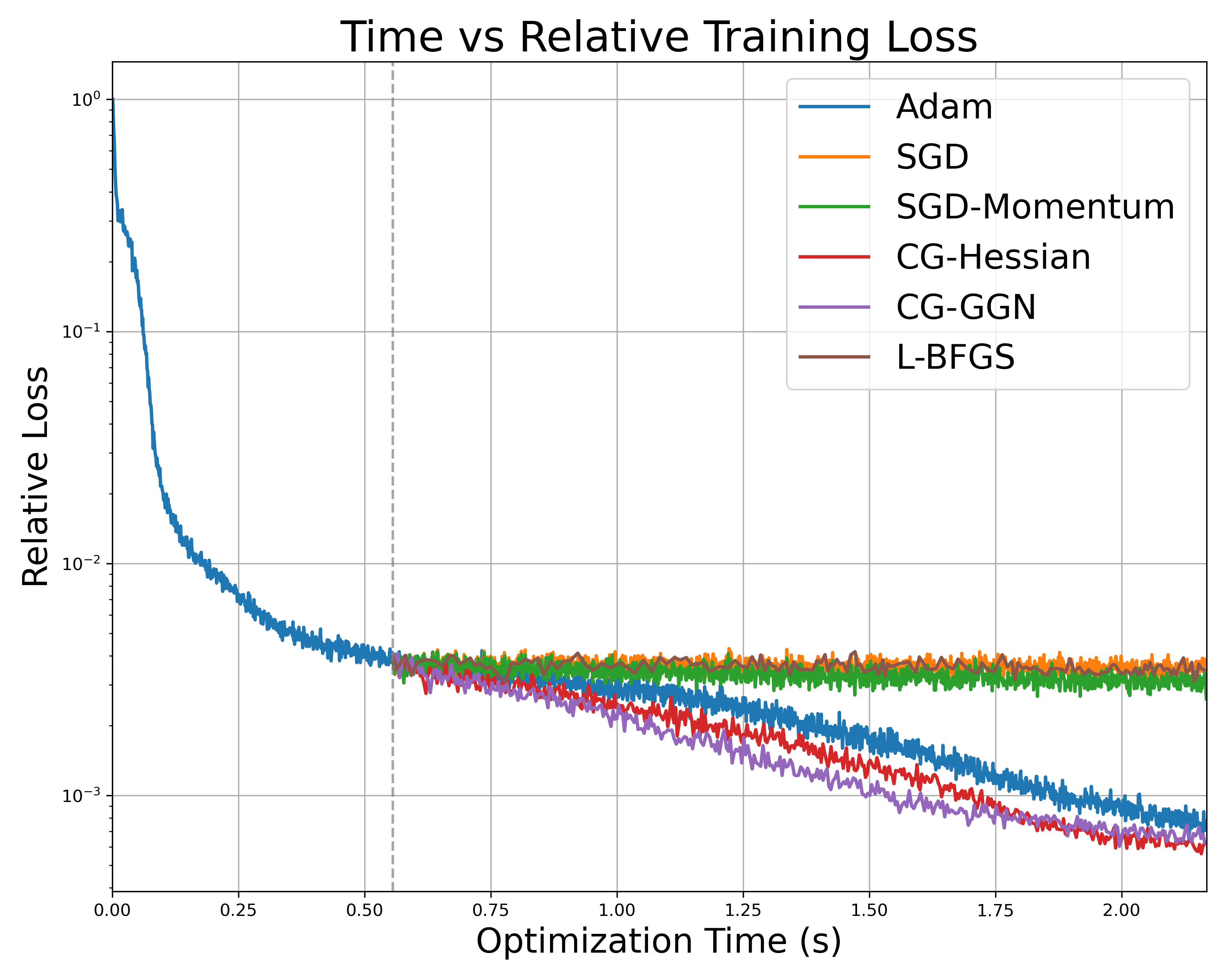}
  \end{minipage}\hfill
  \begin{minipage}{0.25\linewidth}
    \centering
    \includegraphics[width=\linewidth]{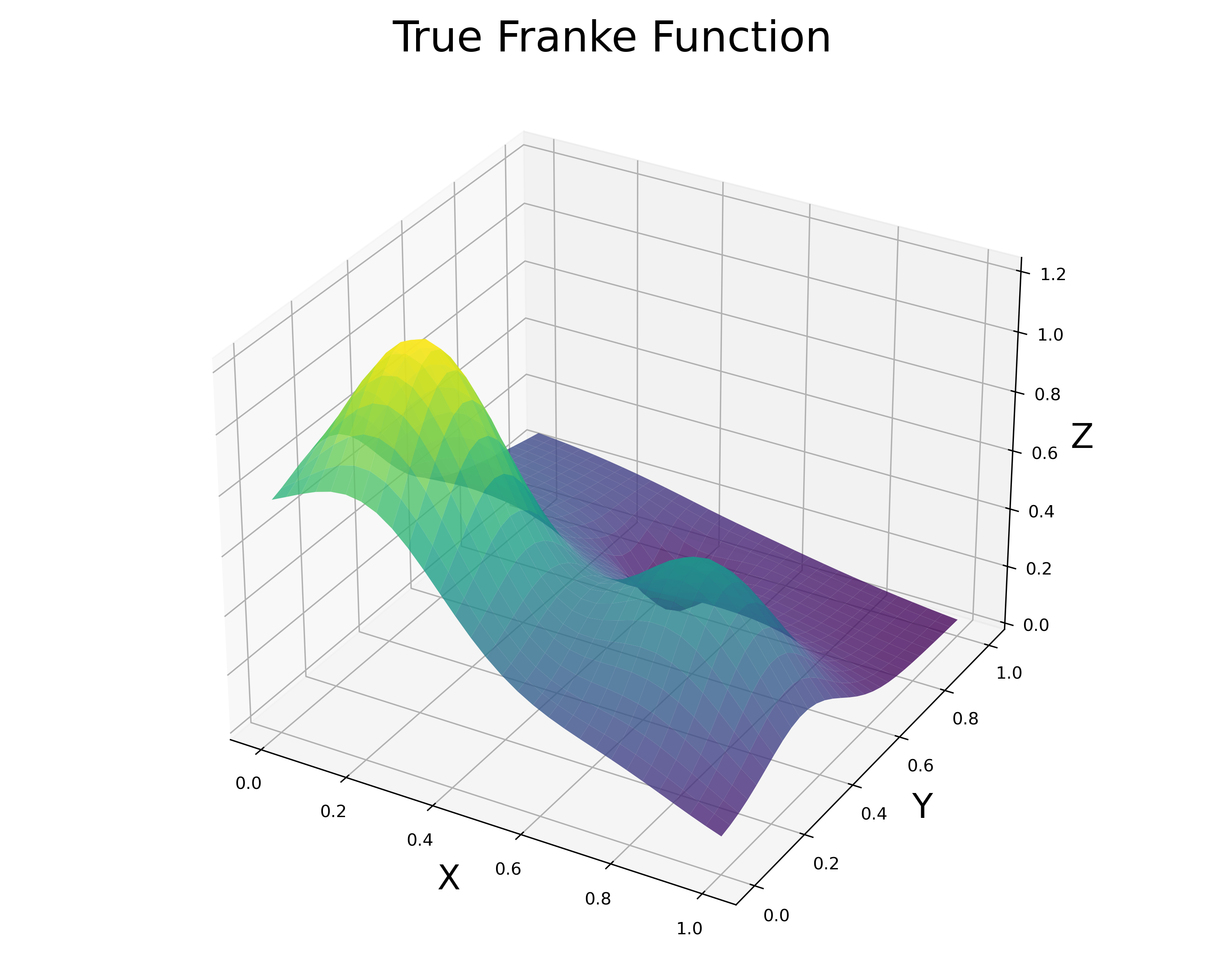}
  \end{minipage}
  \caption{Franke-function regression (mean over $5$ runs). Left: relative training loss vs.\ epochs with the switch to Phase~II at epoch $500$. Center: relative training loss vs.\ wall-clock time. 
Here, relative training loss denotes the training objective normalized by its value at the first plotted epoch, so the initial plotted value is \(1.0\).} Right: Franke surface.
  \label{fig:regression_test}
\end{figure}

After the Adam warm start (Phase~I), Phase~II separates the methods (Fig.~\ref{fig:regression_test}): \emph{Adam}, \emph{L--BFGS}, \emph{CG--GGN}, and \emph{CG--Hessian} descend faster than \emph{SGD} and \emph{SGD+Momentum}. The two curvature‑aware variants, \emph{CG--Hessian} and \emph{CG--GGN}, track one another closely--showing similar contraction and reaching essentially the same loss floor. The similar performance of \emph{CG-Hessian} and \emph{CG-GGN} suggests that both methods provide comparable normal‑space curvature and covariance matrix structure approximation. Adam’s diagonal rescaling and \emph{L--BFGS}’s low‑rank curvature information also mitigate anisotropy and stabilize noisy directions, which explains their advantage over \emph{SGD}. In wall–clock time, the faster descent of curvature‑aware methods compensates for their higher per‑step cost. 

\subsection{Physics–informed neural networks (PINNs)}

\begin{figure}[tbp]
  \begin{minipage}{0.25\linewidth}
    \centering
    \includegraphics[width=\linewidth]{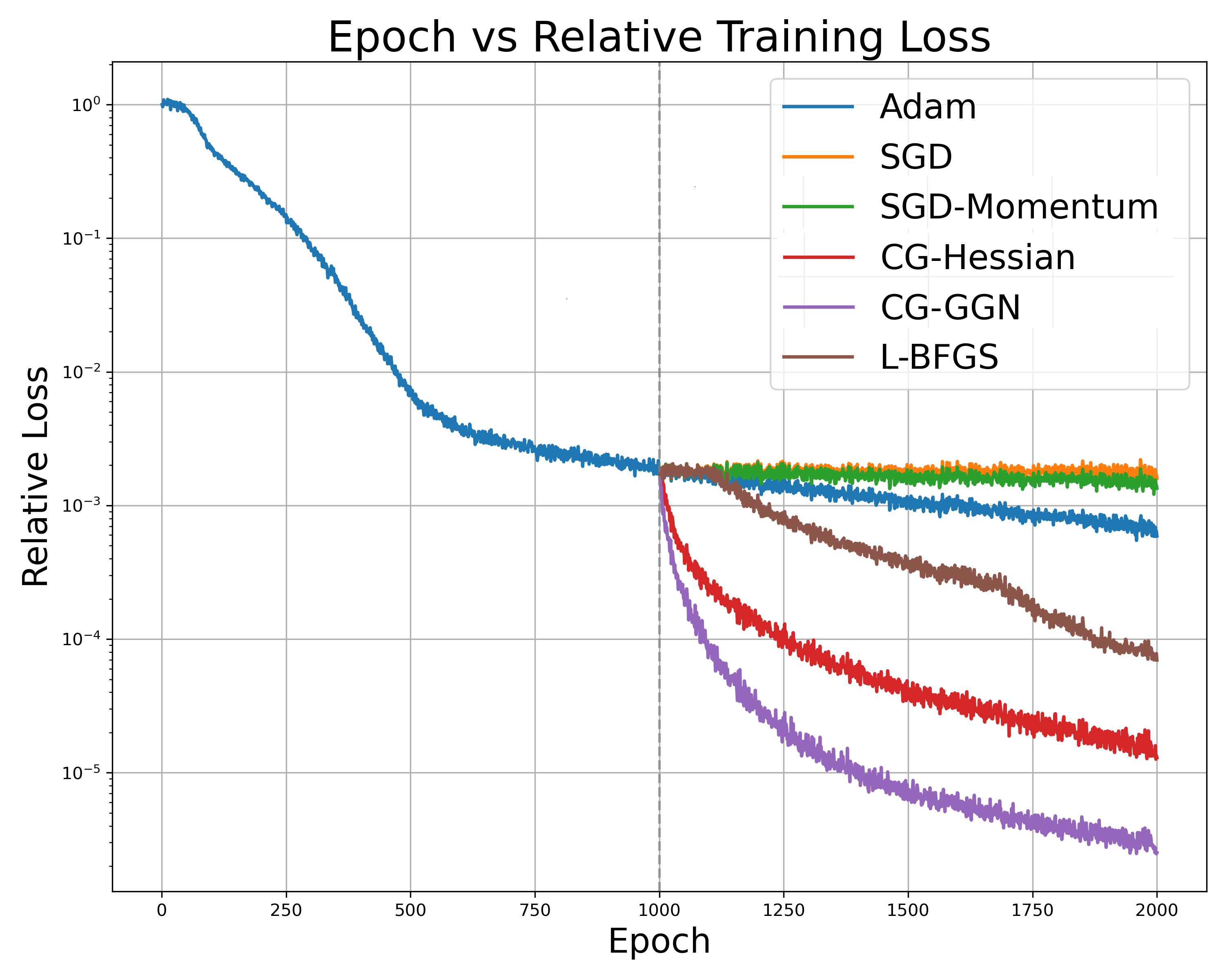}
  \end{minipage}\hfill
  \begin{minipage}{0.25\linewidth}
    \centering
    \includegraphics[width=\linewidth]{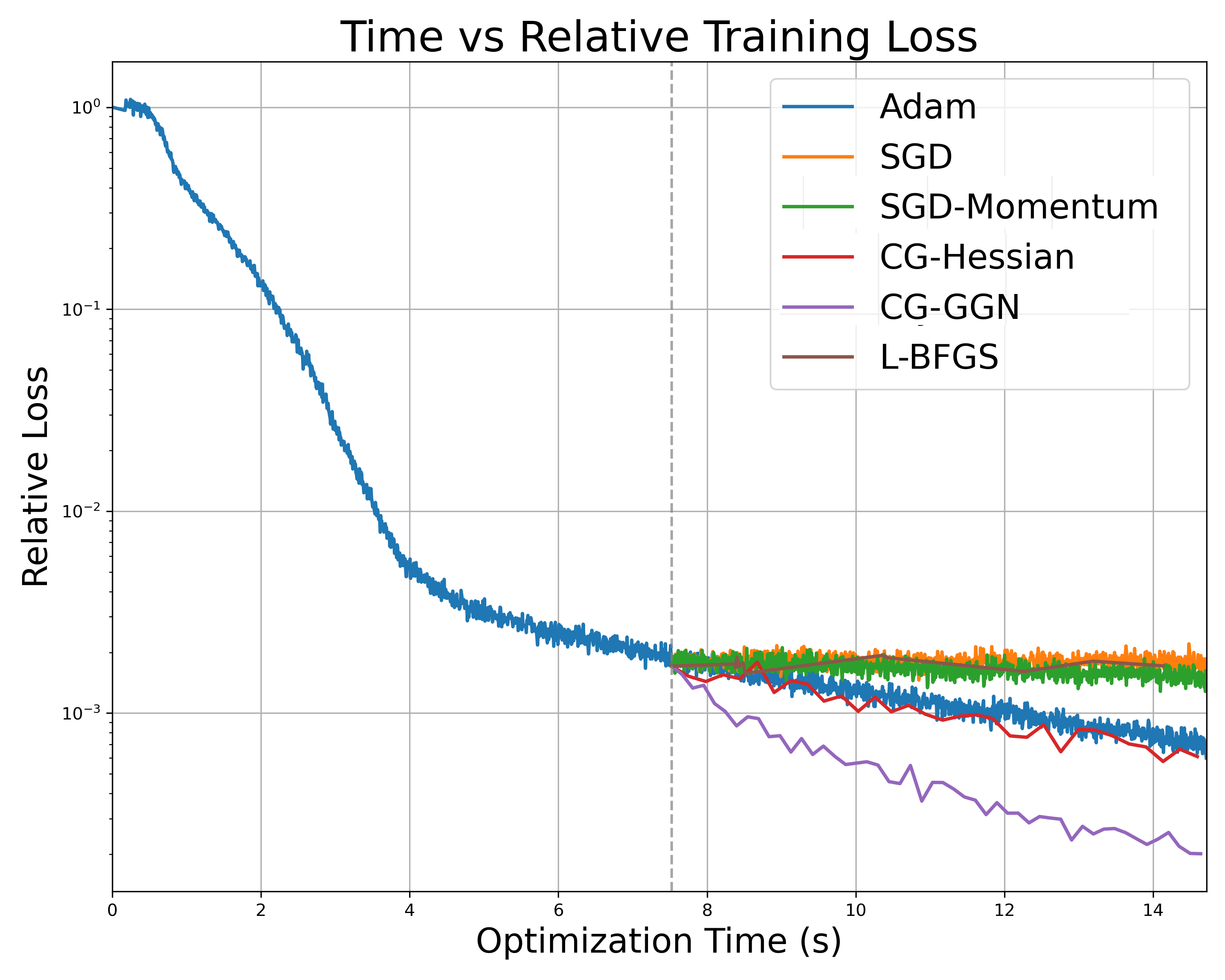}
  \end{minipage}\hfill
  \begin{minipage}{0.25\linewidth}
    \centering
    \includegraphics[width=\linewidth]{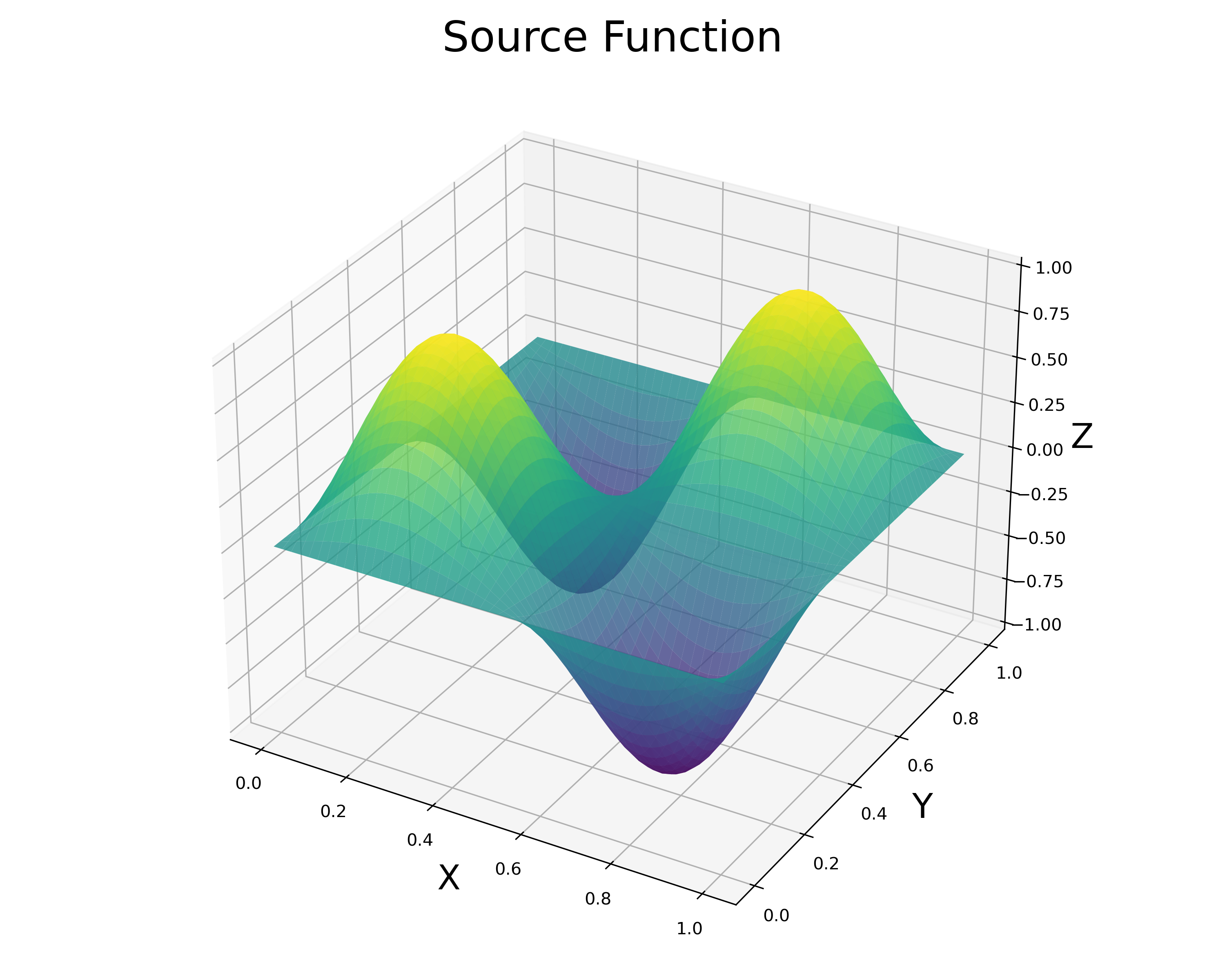}
  \end{minipage}
  \caption{PINN for a Poisson‑type PDE (mean over $5$ runs). Left: relative training loss vs.\ epochs with Phase~I $\rightarrow$ Phase~II at epoch $1{,}000$. Center: relative training loss vs.\ wall–clock time.} Right: source term.
  \label{fig:pinnFinal}
\end{figure}

With the same two‑phase protocol, Phase~II shows a consistent ranking (Fig.~\ref{fig:pinnFinal}). At the bottom, \emph{Adam} and \emph{SGD}/\emph{SGD+Momentum} lack explicit curvature information and progress slowly. \emph{L-BFGS} achieves intermediate performance: it captures limited curvature through its low-rank approximation and line search, but the memory constraint prevents it from matching the full curvature captured by the two \emph{CG} methods. At the top tier, \emph{CG--GGN} and \emph{CG--Hessian} both achieve better performance as curvature-aware methods, with \emph{CG--GGN} showing a slight advantage.

For PINNs, which minimize weighted least-squares residuals, the Gauss–Newton approximation $\mathbf J^\top\mathbf J$ is naturally aligned with the gradient covariance structure and thus provides more effective noise attenuation—consistent with our theory, where the preconditioned noise level is governed by $\mathrm{tr}(\mathbf M^{-1}\bfSigma(\bw))$ in the late stage. The Hessian approximation, by contrast, can introduce negative curvature and additional anisotropy. In wall–clock time, \emph{CG--GGN} achieves the best accuracy within a comparable time budget, despite its higher per-step cost.

\subsection{Green’s function learning}

After Phase~I, Phase~II again shows a clear separation of methods (Figs.~\ref{fig:green_poisson_prediction}–\ref{fig:green_cd_prediction}). In both the diffusion- and convection–dominated cases, \emph{CG--GGN} continues to drive the loss down, whereas \emph{CG--Hessian}, \emph{L--BFGS}, \emph{Adam}, \emph{SGD}, and \emph{SGD+Momentum} quickly form a tight cluster and improve only marginally. Compared with the earlier PINNs experiment, the Green’s–function tasks are more near-singular due to the smoothed-delta forcing, leading to a more challenging, highly anisotropic optimization problem.

Although we did not directly measure the local constants $(\hat L,\hat\mu_{\mathrm{PL}},K)$ on this run, the observed advantage of \emph{CG--GGN} is consistent with the structure of PINN objectives. First, for squared-residual losses, the Gauss–Newton/Fisher matrix is positive semidefinite, avoiding the negative-curvature directions introduced by second-derivative terms in the exact Hessian. This makes the preconditioner more stable and better suited to CG. Second, Fisher-type preconditioners are built from gradient second moments and therefore tend to \emph{whiten} gradient noise, reducing the preconditioned noise level $K$. In contrast, a Hessian preconditioner includes second-order terms that are often misaligned with the gradient-noise covariance, and the damping needed to handle indefiniteness diminishes curvature gains while weakening noise attenuation.

These two effects—better alignment with useful curvature and more effective noise whitening—explain why \emph{CG--GGN} reaches lower losses within comparable wall-clock time, despite its higher per-step cost.

\begin{figure}[thbp]
  \begin{minipage}{0.25\linewidth}
    \centering
    \includegraphics[width=\linewidth]{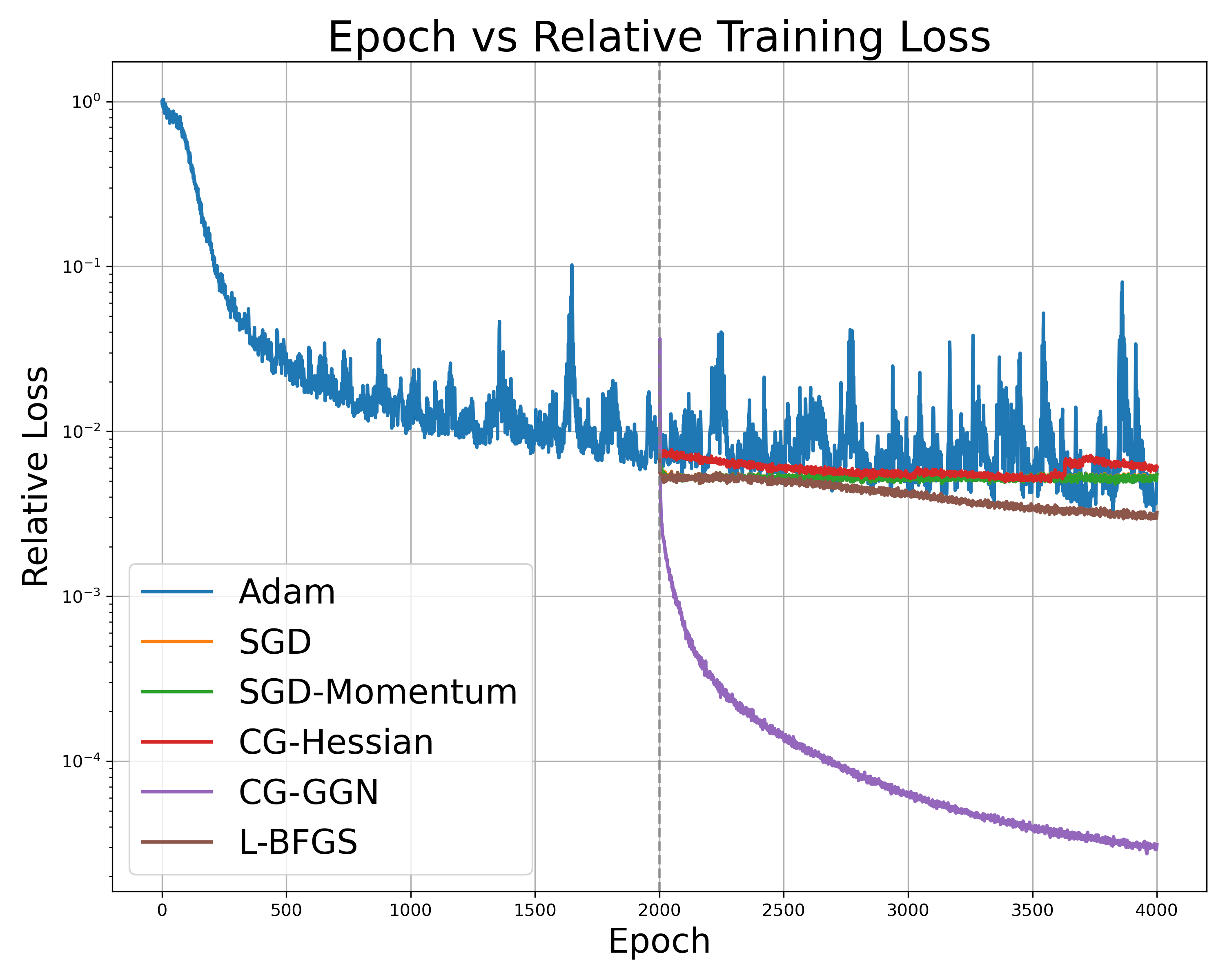}
  \end{minipage}\hfill
  \begin{minipage}{0.25\linewidth}
    \centering
    \includegraphics[width=\linewidth]{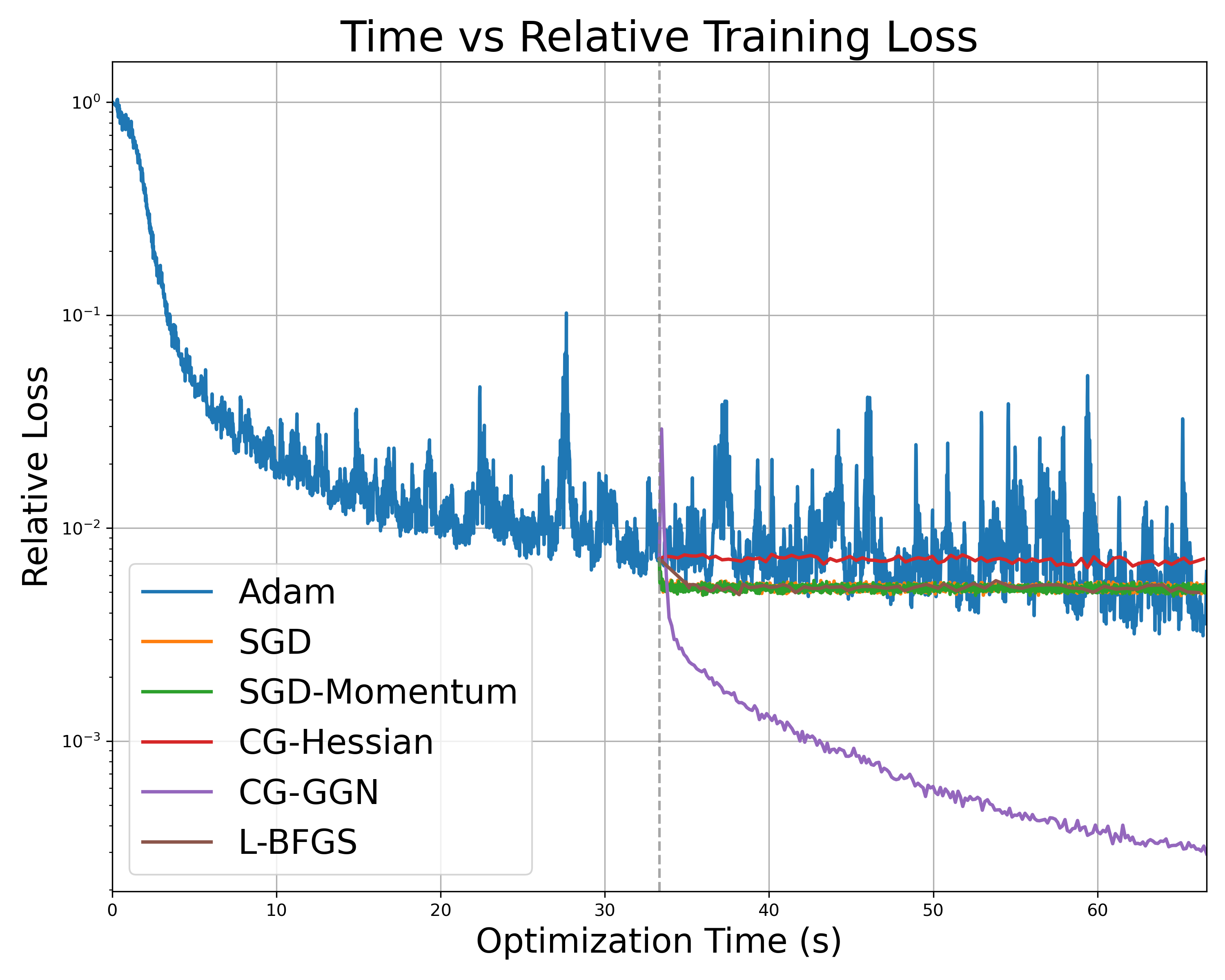}
  \end{minipage}\hfill
  \begin{minipage}{0.25\linewidth}
    \centering
    \includegraphics[width=\linewidth]{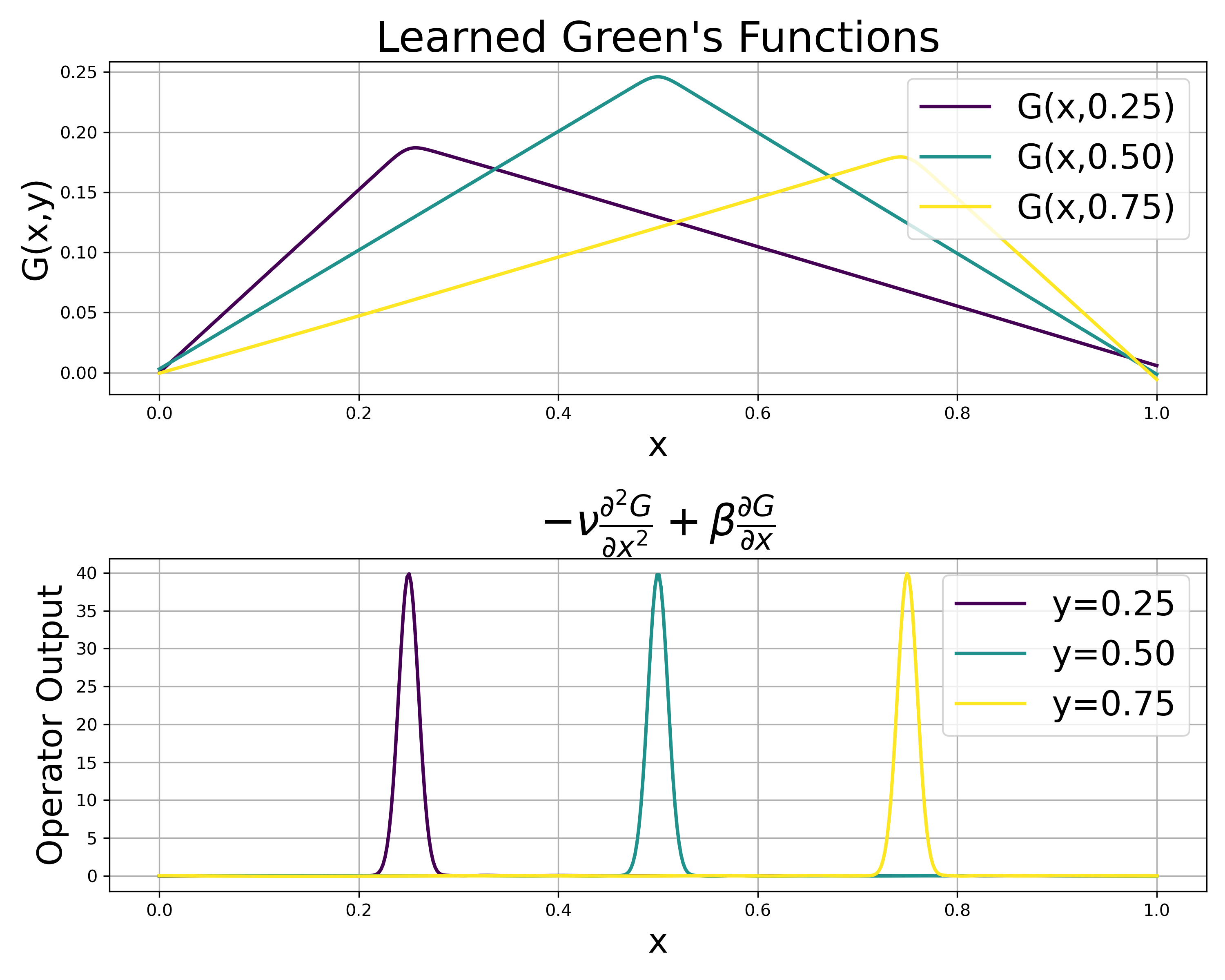}
  \end{minipage}
  \caption{Laplacian Green’s function learning (mean over $5$ runs). Left: relative training loss vs.\ epochs with Phase~I $\rightarrow$ Phase~II at epoch $2{,}000$. Center: relative training loss vs.\ wall–clock time.} Right: learned $G(x,y)$ for three source locations and operator checks.
  \label{fig:green_poisson_prediction}
\end{figure}

\begin{figure}[tbp]
  \begin{minipage}{0.25\linewidth}
    \centering
    \includegraphics[width=\linewidth]{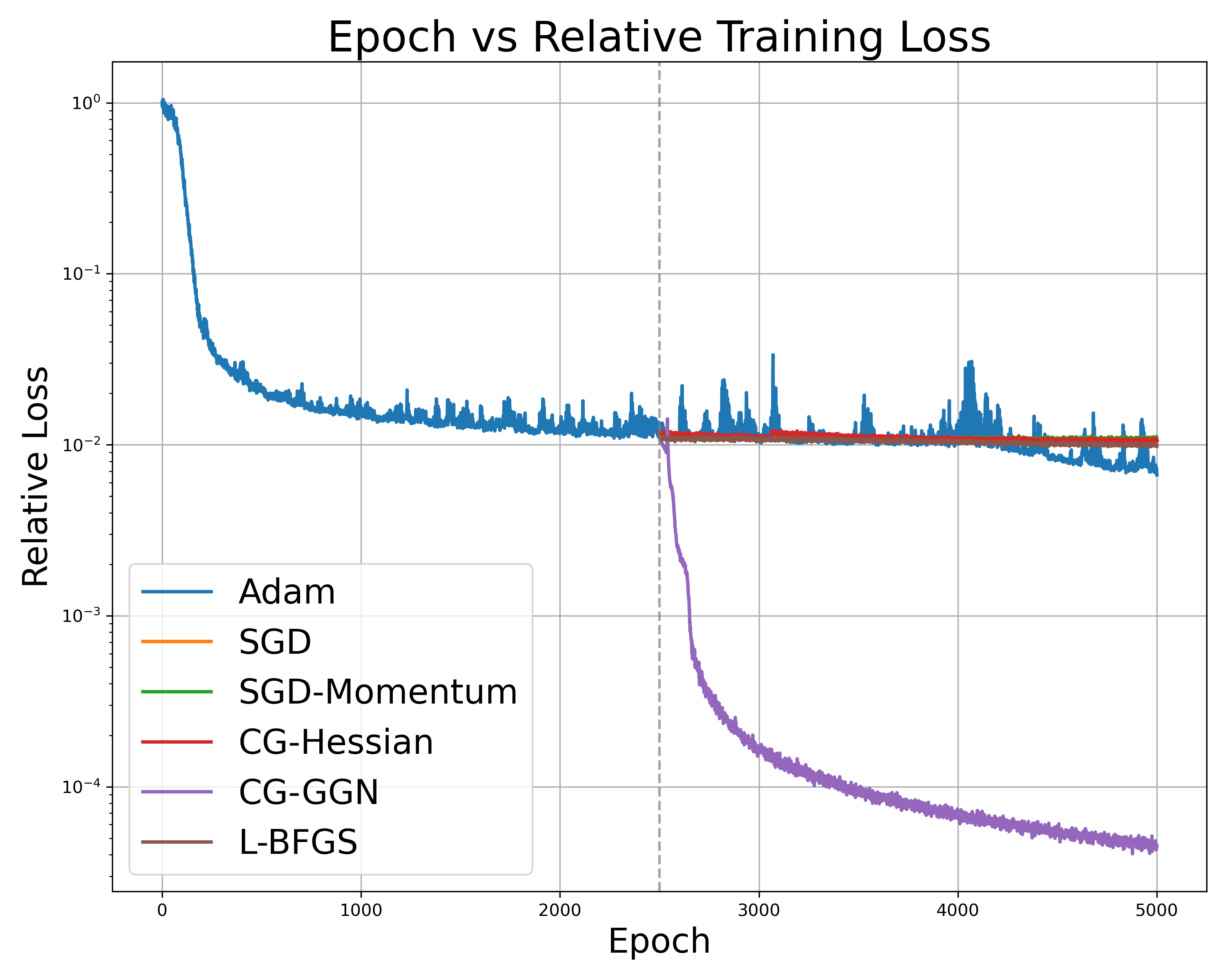}
  \end{minipage}\hfill
  \begin{minipage}{0.25\linewidth}
    \centering
    \includegraphics[width=\linewidth]{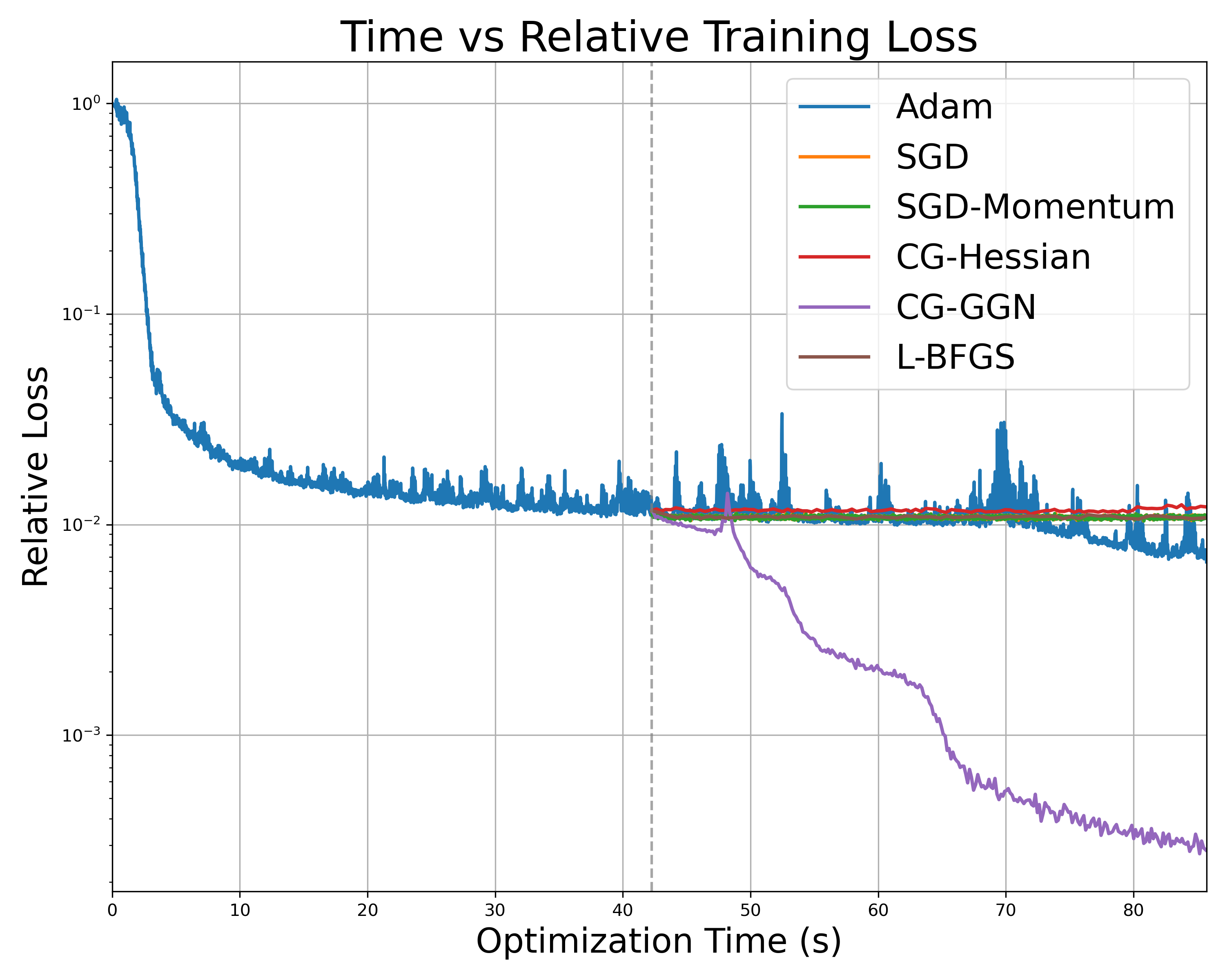}
  \end{minipage}\hfill
  \begin{minipage}{0.25\linewidth}
    \centering
    \includegraphics[width=\linewidth]{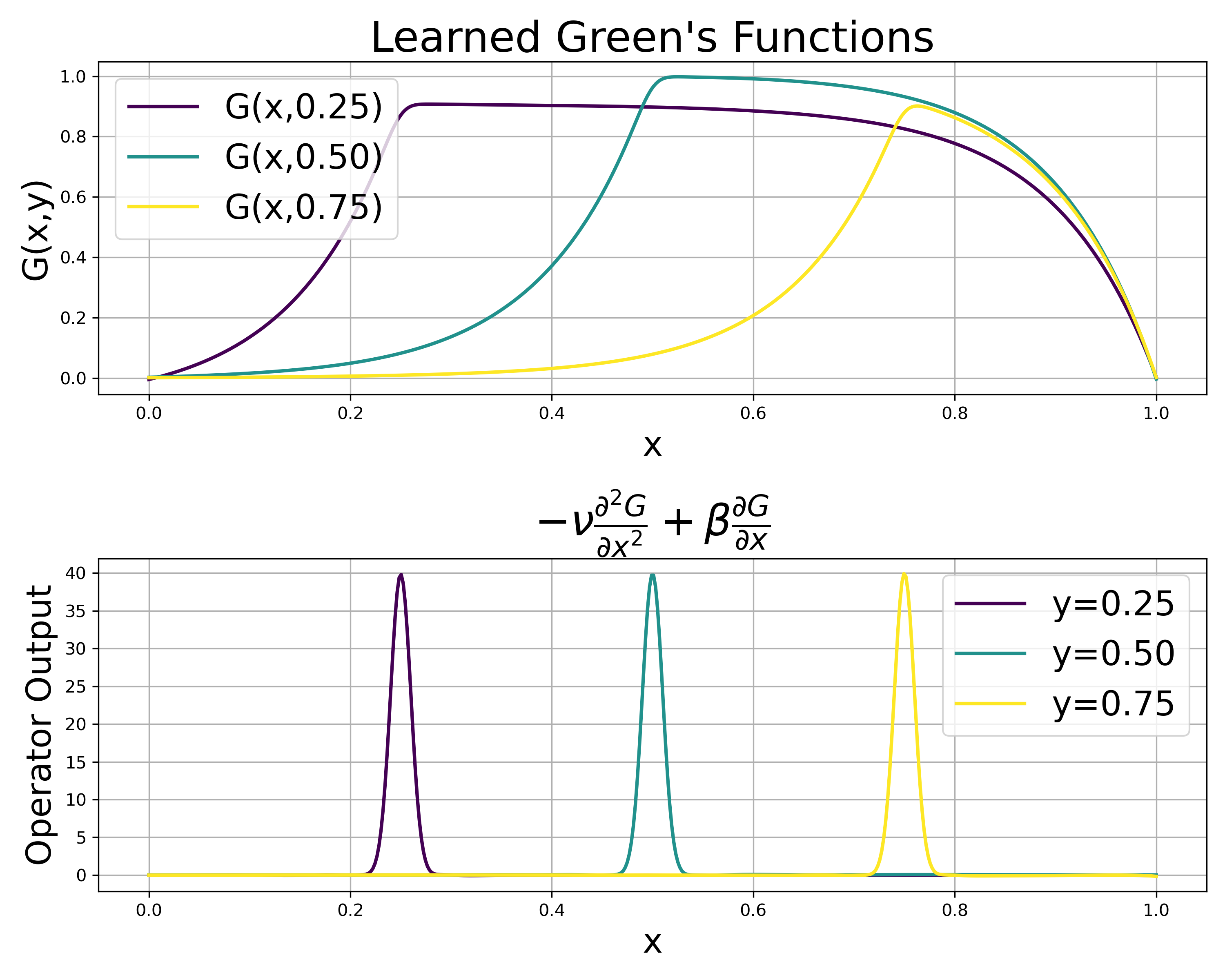}
  \end{minipage}
  \caption{Convection–diffusion Green’s function learning (mean over $5$ runs). Left: relative training loss vs.\ epochs with Phase~I $\rightarrow$ Phase~II at epoch $2{,}500$. Center: relative training loss vs.\ wall–clock time.} Right: learned $G(x,y)$ for three source locations and operator checks.
  \label{fig:green_cd_prediction}
\end{figure}

The right panels of Figs.~\ref{fig:green_poisson_prediction} and~\ref{fig:green_cd_prediction} display the learned Green’s functions $G(x,y)$ at three representative source locations $y$ together with simple operator and boundary checks for \emph{CG--GGN}.
The kernels are localized around the source locations and decay toward the Dirichlet boundaries, and the corresponding operator evaluations produce narrow spikes at $x=y$, in line with the
smoothed–delta forcing used in the training loss.
This suggests that the lower training losses achieved by \emph{CG--GGN} reflect a reasonable Green’s-function approximation rather than a purely numerical artifact.

We conclude the numerical experiments by connecting the CG–GGN preconditioner to the theoretical convergence framework developed in this paper. We empirically examine the quantities $L$ and $K$ that govern the convergence of preconditioned SGD for the PINNs problem and two Green's function learning problems. Because a CG-based preconditioner with only a few iterations typically does not significantly alter the cluster of near-zero eigenvalues, we treat the $\mathbf{M}$–PL constant as unchanged and attribute the quality of the preconditioner primarily to its effect on $L$ and $K$. For these three problems, we fix the random seed to 42 and analyze the network parameters at epoch 250 in Phase II. After preconditioning, the $L$ value reduced by factors of 78x, 3710x, and 1923x, respectively. We additionally quantify the impact of preconditioning on the noise level $K$. Using the same network parameters $\mathbf{w}$, we sample $100$ independent mini-batches, construct the preconditioner $\mathbf{M}^{-1}$ from the first batch, and observe that after preconditioning the estimated trace of the gradient-noise covariance matrix is reduced by factors of 12x, 1505x, and 203x, respectively. This substantial reduction demonstrates that the CG–GGN preconditioner effectively attenuates gradient noise. Consistent with our theory, the combined improvements in conditioning and noise reduction yield both faster linear convergence and a significantly lower asymptotic noise floor.

\section{Conclusion}
We developed a local, geometry-aware theory for preconditioned SGD that makes two effects explicit:  
(1) the {rate} inside a basin is controlled by a preconditioner-dependent condition number in the $\mathbf M$–metric,  
and (2) the {noise floor} is governed by the preconditioned noise.  
We additionally obtained a basin-stability guarantee, giving an explicit probability that 
iterates remain in a region where these local properties hold.  
Together, the results motivate a simple rule: choose $\mathbf M$ to improve local conditioning while suppressing noise 
in the $\mathbf M^{-1}$–norm.

A key next direction is \emph{covariance-aware} preconditioning.  
Our bounds suggest that effective design should jointly target conditioning and noise attenuation, motivating 
structured covariance models and adaptive schemes that update curvature and noise statistics simultaneously.  
Extending basin-stability guarantees to nonstationary noise and developing online diagnostics for the local 
constants would move toward fully adaptive, geometry- and noise-aware SGD.

\section*{Acknowledgments}
The research of M. Scott, T. Xu, and Y. Xi is supported by NSF DMS-2208412 and NSF DMS-2513118. The research of Q. Ye and A. Pichette-Emmons is supported by NSF DMS-2208314 and IIS-2327113. The research of Z. Tang and Y. Saad is supported by NSF DMS-2513117.

{
\newpage

\bibliography{SGD_Preconditioning}
\bibliographystyle{tmlr}
}

\newpage
\appendix
\part*{Appendix}

\section{Notation used in paper}

In general, capital bold letters are matrices ($\bfA$), lower case bold letters are vectors ($\bfv$), and lower case Greek or Latin letters are constants ($\nu, c$). Moreover, there are some notation that is used consistently throughout the paper. A reference table for these symbols is given in Table~\ref{tab:notation}.

\begin{table}[!ht]
    \centering
    \caption{Reference for recurring notation in the paper.}
    \resizebox{\linewidth}{!}{
    \begin{tabular}{|c|l|}
    \hline 
    \textbf{Symbol} & \textbf{Definition}\\
    \hline \hline
        $k$ & iteration counter\\
        $\bfw$ & Model parameters\\
        $F(\bfw)$ & Objective function at point $w$\\
        $F_\ast := F(\bfw^\ast)$ & minimum function value at minimizer\\
       $\alpha$  & learning rate  \\
        $\alpha_k$ & learning rate scheduler/ learning rate at epoch $k$\\
        $\overline{\alpha}$ & fixed learning rate\\
        $c, L$ & strong convexity, Lipschitz constant for $\norm{\cdot}_2 = \norm{\cdot}_\bfI$\\
        $\hat{c}, \hat{L}$ & strong convexity, Lipschitz constant for preconditioned case: $\norm{\cdot}_\bfM$\\
                $\hat{\mu}_{PL}$ & PL constant for preconditioned case: $\norm{\cdot}_\bfM$\\

        $\calB$  & mini-batch of the dataset\\
        $\bfM$ & generic preconditioner where $\bfM^{-1}$ is applied to a vector\\
        $g(\cdot,\cdot)$ & gradient vector\\
        $\kappa(\bfM)$ & Condition number of $\bfM$ (always based on $\norm{\cdot}_2$)\\
        $\mu, \mu_G$ & lower and upper bound constants on the first moment of the gradient\\
        $K, K_V$ & constant and scaling values of the affine bound on the gradient's variance\\
        $K_G$ & Constant needed for learning rate upper bound, dependent on $K_V+ \mu_G^2>0$.\\
        $\bbE_\bfxi, \bbV_\bfxi$ & Expectation and Variance of gradient with random realization $\bfxi$\\
        $\beta, \gamma$ & constants affecting the lower and upper bound on $\alpha_k$ for diminishing learning rate proofs\\
        $\nu$ & convergence constant in $\order{(\gamma + k)^{-1}}$\\
        $r$ & radius of convex basin around local minimum\\
        $\calN_r, \calN_{r_+}$ & local neighborhood around minimizer, slightly larger local neighborhood for containment\\
        $\tau$ & smallest iteration number where $\bfw_k \notin \calN_r$.\\
        $C$ & The stochastic noise floor defined $\overline{\alpha}\hat{L}K/ (2\hat{c}\mu)$\\
        \(\mathcal{N}_{\bfM}(\bw)\)  & instantaneous preconditioned noise \(\mathrm{tr}(\bfM^{-1}\Sigma(\bw))\) \\
\(K\)  & uniform baseline for \(\mathcal{N}_{\bfM}(\bw)\) on the analysis region (noise floor constant) \\
$\alpha_{\text{QG}}$ & quadratic growth constant of locally convex basin a distance from the minimizer\\
        \hline
    \end{tabular}
    }
    \label{tab:notation}
\end{table}

\section{Mathematical preliminaries}

\subsection{Preconditioning}\label{appendix:preconditioning}
The condition number from a linear equation $\bfA\bfx = \bfb$ bounds the accuracy of the solution $\bfx$, and is defined as 
\begin{align*}
    \kappa(\bfA) = \norm{\bfA}\norm{\bfA^{-1}},
\end{align*}
where if not stated $\norm{\cdot} = \norm{\cdot}_2$. If $\bfA$ is ill-conditioned, i.e. has a large condition number, then a small perturbation in $\bfb$ can result in a large perturbation of the solution $\bfx$. In addition to the accuracy of the solution, the convergence rate of iterative methods, such as conjugate gradient, depends on $r = \frac{\sqrt{\kappa}-1}{\sqrt{\kappa} +1}$.

It is easy to see that $r<1$, but if $\kappa\gg1$, then convergence will be extremely slow as $r\to1$. This motivates the need for ways to reduce the condition number, through a technique called \emph{preconditioning}. Throughout this paper, we assume that $\bfM$ is the preconditioner, and we only have access to the action of $\bfM^{-1}$ onto a vector. More technically, we say $\bfM$ is an efficient preconditioner to the matrix $\bfA$ such that

\begin{align*}
\kappa(\bfM^{-1}\bfA) &< \kappa(\bfA).
\end{align*}

For clarity, even though we call $\bfM$ the preconditioner, we don't explicitly form it. Additionally, we don't form $\bfM^{-1}$ either but just observe the action of the preconditioner on a vector, $\bfM^{-1}\bfv$.

There are different ways we can utilize the preconditioner $\bfM$. First, assume $\bfM^{-1}$ exists, then the \emph{left} preconditioned system is 
\begin{align*}
    \bfM^{-1}\left(\bfA\bfx - \bfb\right) &= 0.    
\end{align*}
Both the original linear system and the left-preconditioned system give the same solution. Additionally, we could solve the right preconditioned system
\begin{align*}
    \bfA\bfM^{-1}\left(\bfM \bfx\right) &= \bfb.
\end{align*}

This requires us to solve $\bfA\bfM^{-1}\bfy = \bfb$ for $\bfy$, and then to recover the original solution, we would need to do another linear system solve $\bfM\bfx=\bfy$ for $\bfx.$

These two techniques can be combined to perform \emph{split} preconditioning. If we employ $\bfM$ as the right preconditioner, and $\bfN$ as the left preconditioner, we compute

\begin{align*}
    \bfN\bfA\bfM^{-1}\left(\bfM\bfx\right) &= \bfN\bfb.
\end{align*}

This is beneficial if one would like to scale the rows and columns of $\bfA$ differently. Additionally, observe that if $\bfA$ is symmetric and $\bfN^\top =\bfM^{-1}$, then $\bfN\bfA\bfM^{-1}$ is also symmetric.

In the preconditioned version of CG (PCG), one solves the equivalent system $\bfM^{-1}\bfA\bfx = \bfM^{-1}\bfb$ using a similar three-term recurrence, but applied to the transformed system. The key requirement is that the preconditioner $\bfM$ be symmetric positive definite and chosen so that $\bfM^{-1}\bfA$ has a significantly smaller condition number than $\bfA$ itself. For practical purposes, PCG is used in matrix-free settings where only the action $\bfM^{-1}\bfv$ is required, not the explicit matrix $\bfM^{-1}$.

\subsection{Preconditioners for SGD}\label{appendix:preconditioner}
In this section, we briefly review several preconditioners commonly used in the ML literature. First, if we define $\bfg_k$ to be the sum of the squared gradients up until iteration $k$, we arrive at AdaGrad~\citep{duchi_adaptive_2011}
\[\mathbf{M}_{\text{AdaGrad}} = \operatorname{diag}\left(\sqrt{\mathbf{g}_k} + \varepsilon\right).\] The issues with this is the gradient squared will only increase, leading to  premature stopping. To counteract that, exponentially moving weighted averages are widely used in diagonal preconditioners such as Adam~\citep{kingma_adam_2017} and its momentum-less counterpart RMSProp~\citep{hinton_neural_2014}:
\[\mathbf{M}_{\text{Adam}} = \operatorname{diag}\left(\sqrt{\mathbf{s}_k} + \varepsilon\right),\] where here $\mathbf{s}_k$ is an exponential moving average of squared gradients, and $\varepsilon > 0$ is a small constant added for numerical stability. While computationally efficient and robust to scaling, such diagonal preconditioners fail to capture cross-parameter curvature, which may lead to suboptimal convergence in ill-conditioned problems.

The Hessian matrix of the loss function,
\[
\mathbf{H}(\bfw) = \nabla^2 \mathcal{L}(\bfw),
\]
captures the exact second-order structure of the problem and provides the most complete curvature information. However, computing or storing the full Hessian is typically infeasible in high-dimensional neural network (NN) models. Moreover, it is not guaranteed to be positive definite in nonconvex settings, which complicates its direct use as a preconditioner.

To reduce computational cost, one can approximate the Hessian using a single mini-batch, $\calB$:
\[
\mathbf{H}_{\mathcal{B}}(\bfw) = \nabla^2 \mathcal{L}_{\mathcal{B}}(\bfw).
\]
This matrix is cheaper to compute and can be updated online, but suffers from high variance and may not preserve important curvature directions observed over the full dataset. While the Newton and quasi-Newton methods work well for deterministic optimization, many have provided a distinction between these and other methods for designing preconditioners in the stochastic setting~\citep{Li_Preconditioned_2018,bottou_optimization_2018}.

As opposed to constructing the Hessian, an alternative is the Gauss-Newton Hessian approximation, which assumes the difference between the model and label is small in a least-squares norm. This idea was further generalized to loss functions of the form $\ell(\theta) =  \sum_n a_n\left(b_n\left(\mathbf{\theta}\right)\right)$ in \cite{schraudolph_fast_2002}. This generalized Gauss-Newton matrix (GGN), which ignores second order information of $b_n$, is SPD when $a_n$ is convex even when the true Hessian is indefinite. 

Another alternate method is the FIM defined as
\[
\mathbf{F}(\bfw) = \mathbb{E}_{x, y} \left[ \nabla_\bfw \log p_\bfw(y \mid x) \nabla_\bfw \log p_\bfw(y \mid x)^\top \right],
\]
which is guaranteed to be SPD under mild regularity conditions. For models trained with exponential-family losses, the FIM coincides with the GGN~\citep{martens_new_2020, schraudolph_fast_2002}. Its structure allows for stable and curvature-aware preconditioning.  Using the FIM as the preconditioner in the stochastic gradient descent algorithm yields Natural Gradient Descent from online learning~\cite{amari_natural_1998}.

The empirical FIM estimates the expectation in the FIM using a finite mini-batch:
\[
\mathbf{F}_{\text{emp}}(\bfw) = \frac{1}{|\mathcal{B}|} \sum_{(x, y) \in \mathcal{B}} \nabla_\bfw \log p_\bfw(y \mid x) \nabla_\bfw \log p_\bfw(y \mid x)^\top.
\]
It is symmetric and positive semidefinite, and is often used in practice due to its lower computational overhead compared to the full FIM. However, it may introduce bias depending on the mini-batch size and model quality~\citep{kunstner_limitations_2020}.

Finally, the L-BFGS algorithm is a popular quasi-Newton method that builds a low-rank approximation to the inverse Hessian using a history of gradients and iterates. It is well-suited to medium-scale problems and has seen empirical success in ML \citep{bottou_optimization_2018}. Additional variants of L-BFGS have also been proposed~\citep{berahas_multibatch_2016,bollapragada_progressive_2016}. While not traditionally framed as a preconditioner, L-BFGS can be interpreted as implicitly applying a data-driven curvature approximation.



\section{Assumptions and proofs of theorems}\label{sec:AssProofs}
\subsection{Assumptions}
\begin{assumption}[Strong Convexity] \label{ass:1} 
         The objective function $F\colon \mathbb{R}^d \to \mathbb{R}$ is strongly convex in that there exists a constant $c>0$ such that 
      \[
      F(\overline{\bfw})\geq F(\bfw)+\nabla F(\bfw)^\top(\overline{\bfw}-\bfw)+\frac{1}{2}c||\overline{\bfw}-\bfw||_2^2, \qquad\forall \ (\overline{\bfw},\bfw)\in \mathbb{R}^d\times \mathbb{R}^d
      \]
\end{assumption}

From elementary optimization, this assumption is equivalent to $F$ having a unique minimizer $\bfw^\ast\in\mathbb{R}^d$. We define $F_\ast := F(\bfw^\ast)$.

\begin{assumption}[Lipschitz continuity of gradient] \label{ass:2} The objective function $F\colon \mathbb{R}^d \to \mathbb{R}$ is continuously differentiable and the gradient function of $F$,  $\nabla F\colon \mathbb{R}^d \to \mathbb{R}^d$, is Lipschitz continuous with Lipschitz constant $L>0$, i.e.
\[
||\nabla F(\bfw) - \nabla F(\overline{\bfw})||_2 \leq L||\bfw-\overline{\bfw}||_2
\]
    for all $\{\bfw,\overline{\bfw}\}\subset \mathbb{R}^d$.
\end{assumption}

\begin{remark}\label{rmk:condNum}
    If $F$ is continuously twice differentiable, then $\nabla F$ is Lipschitz continuous with Lipschitz constant $ L$ if and only if the eigenvalues of the matrix $\nabla^2 F(\bfw)$ are bounded above by $ L$ for all $w$. $F$ is strongly convex with constant  $ c$ if and only if the eigenvalues of the matrix $\nabla^2 F(\bfw)$ is bounded below by $ c$ for all $w$. Therefore, $L/c$ is an upper bound of the condition number of $\nabla^2 F(\bfw)$. 
\end{remark}

Lipschitz continuity of gradient is an assumption made in nearly all convergence analyses of gradient-based methods~\citep{khaled_better_2022}. 

\begin{assumption}[Bounds on First and Second Moments of Gradient]\label{ass:3}
Assume
    \begin{enumerate}

        \item There exist scalars $\mu_G \geq \mu>0$ such that, for all $k\in\mathbb{N}$,
\begin{equation}\label{assumptionA}
\nabla F(\bfw_k)^\top \mathbb{E}_{\bfxi_k}[g(\bfw_k,\bfxi_k)] \geq \mu ||\nabla F(\bfw_k)||_2^2  
\end{equation}
\begin{equation}\label{assumptionB}
||\mathbb{E}_{\bfxi_k}[g(\bfw_k,\bfxi_k)]||_2 \leq \mu_G ||\nabla F(\bfw_k)||_2
\end{equation}
        \item There exist scalars $K\geq 0$ and $K_V\geq 0$ such that, for all $k\in\mathbb{N}$,
\begin{equation}\label{assumptionC}
\mathbb{V}_{\bfxi_k}[g(\bfw_k,\bfxi_k)]\leq K+K_V||\nabla F(\bfw_k)||_2^2
\end{equation}

    \end{enumerate}
    where $\mathbb{V}_{\bfxi_k}[g(\bfw_k,\bfxi_k)]:= \mathbb{E}_{\bfxi_k}[||g(\bfw_k,\bfxi_k)||_2^2]-||\mathbb{E}_{\bfxi_k}[g(\bfw_k,\bfxi_k)]||_2^2$. 
\end{assumption}

\begin{theorem}[Strongly convex objective function, fixed learning rate~\citep{bottou_optimization_2018}]\label{thm:1}
    Under Assumptions \ref{ass:1},\ref{ass:2}, \ref{ass:3}, suppose that the SGD algorithm is run with fixed learning rates, $\alpha_k=\overline{\alpha}$ for all $k\in\mathbb{N}$ where 
   \[
   0<\overline{\alpha}\leq \frac{\mu}{LK_G}
   \;\;\mbox{ and }\;\; K_G:= K_V+\mu_G^2\geq \mu^2 >0.
   \]
   Then, the expected optimality gap satisfies the following for all $k\in\mathbb{N}$:
   \begin{equation}
       \mathbb{E}[F(\bfw_k)-F_\ast] \leq \frac{\overline{\alpha}LK}{2c\mu} + (1-\overline{\alpha}c\mu)^{k-1} \left(  F(\bfw_1)-F_\ast - \frac{\overline{\alpha}LK}{2c\mu}\right) \stackrel{k\to\infty}{\longrightarrow} \frac{\overline{\alpha}LK}{2c\mu}
   \end{equation}
\end{theorem}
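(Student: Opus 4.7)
The plan is to build a one-step recursion of the form $\mathbb{E}[F(\bfw_{k+1})-F_\ast]\le (1-\rho)\,\mathbb{E}[F(\bfw_k)-F_\ast]+D$ with an explicit contraction factor $\rho\in(0,1)$ and offset $D$, and then unroll it around its fixed point. The first ingredient is the \emph{descent lemma} implied by Assumption~\ref{ass:2}: for any $\bfv$,
$$F(\bfv)\le F(\bfw_k)+\nabla F(\bfw_k)^\top(\bfv-\bfw_k)+\tfrac{L}{2}\|\bfv-\bfw_k\|_2^2.$$
Substituting $\bfv=\bfw_{k+1}=\bfw_k-\overline\alpha\,g(\bfw_k,\bfxi_k)$ and taking the conditional expectation in $\bfxi_k$, the cross term becomes $-\overline\alpha\,\nabla F(\bfw_k)^\top\mathbb{E}_{\bfxi_k}[g(\bfw_k,\bfxi_k)]$, bounded above by $-\overline\alpha\mu\|\nabla F(\bfw_k)\|_2^2$ via~\eqref{assumptionA}. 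For the quadratic term I would use the variance decomposition $\mathbb{E}_{\bfxi_k}[\|g\|_2^2]=\mathbb{V}_{\bfxi_k}[g]+\|\mathbb{E}_{\bfxi_k}[g]\|_2^2$ together with~\eqref{assumptionB}--\eqref{assumptionC} to get $\mathbb{E}_{\bfxi_k}[\|g\|_2^2]\le K+K_G\|\nabla F(\bfw_k)\|_2^2$, where $K_G=K_V+\mu_G^2$.

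Combining these bounds yields
$$\mathbb{E}_{\bfxi_k}[F(\bfw_{k+1})]-F(\bfw_k)\le -\Big(\overline\alpha\mu-\tfrac{L\overline\alpha^{2} K_G}{2}\Big)\|\nabla F(\bfw_k)\|_2^{2}+\tfrac{L\overline\alpha^{2} K}{2}.$$
The learning-rate restriction $\overline\alpha\le \mu/(LK_G)$ forces $\tfrac{L\overline\alpha K_G}{2}\le \tfrac{\mu}{2}$, so the parenthesized coefficient is at least $\overline\alpha\mu/2>0$. Strong convexity (Assumption~\ref{ass:1}) implies the Polyak--{\L}ojasiewicz inequality $\|\nabla F(\bfw_k)\|_2^{2}\ge 2c\bigl(F(\bfw_k)-F_\ast\bigr)$; substituting, subtracting $F_\ast$, and taking total expectation yields the clean linear recursion
$$\mathbb{E}[F(\bfw_{k+1})-F_\ast]\le (1-\overline\alpha c\mu)\,\mathbb{E}[F(\bfw_k)-F_\ast]+\tfrac{L\overline\alpha^{2} K}{2}.$$

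To close the argument, I would identify the fixed point $C:=\overline\alpha LK/(2c\mu)$, determined by $C=(1-\overline\alpha c\mu)\,C+L\overline\alpha^{2}K/2$, rewrite the recursion as $\mathbb{E}[F(\bfw_{k+1})-F_\ast]-C\le (1-\overline\alpha c\mu)\bigl(\mathbb{E}[F(\bfw_k)-F_\ast]-C\bigr)$, and iterate $k-1$ times to obtain the claimed closed form. The only genuine check is that the contraction factor lies in $(0,1)$: from $K_G\ge\mu^{2}$ and $\overline\alpha\le \mu/(LK_G)$ one gets $\overline\alpha c\mu\le c\mu^{2}/(LK_G)\le c/L\le 1$, using the standard fact that Assumptions~\ref{ass:1}--\ref{ass:2} force $L\ge c$. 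Beyond this, the proof is essentially bookkeeping; the subtlest point is cleanly separating conditional and total expectations when promoting the per-step bound to a deterministic recursion in $\mathbb{E}[F(\bfw_k)-F_\ast]$, so that iteration is unambiguous.
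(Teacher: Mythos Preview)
Your proposal is correct and follows essentially the same route as the paper. The paper does not give a standalone proof of Theorem~\ref{thm:1} (it is cited from \cite{bottou_optimization_2018}), but its proof of the preconditioned analogue, Theorem~\ref{1stmainthm}, proceeds exactly as you outline: descent lemma from $L$-smoothness, conditional moment bounds to get the per-step inequality (paper's Lemmas~\ref{lemma:1}--\ref{lemma:2}), the step-size condition to absorb the $K_G$ term, the PL inequality from strong convexity, promotion to the total-expectation recursion (Lemma~\ref{lemma:3}), and then subtraction of the fixed point $\overline\alpha LK/(2c\mu)$ followed by iteration, with the same chain $\overline\alpha c\mu\le c\mu^2/(LK_G)\le c/L\le 1$ to certify contraction.
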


Note that it follows from (\ref{assumptionB}) and (\ref{assumptionC}) that  $\mathbb{E}_{\bfxi_k}[||g(\bfw_k,\bfxi_k)||_2^2]\leq K+K_G||\nabla F(\bfw_k)||_2^2$ with $K_G:= K_V+\mu_G^2\geq \mu^2 >0$.

\begin{theorem}[Strongly convex objective function, diminishing learning rates~\citep{bottou_optimization_2018}]\label{thm:2}
    Under the same assumptions as Theorem \ref{thm:1}, suppose that the SGD algorithm is run with a learning rate sequence such that, for all $k\in\mathbb{N}$,
   \[
   \alpha_k=\frac{\beta}{\gamma+k} \text{ for some } \beta> \frac{1}{c\mu} \text{ and } \gamma>0 \text{ such that } \alpha_1\leq \frac{\mu}{LK_G}
   \]
   Then, the expected optimality gap satisfies the following for all $k\in\mathbb{N}$:
   \begin{equation}
       \mathbb{E}[F(\bfw_k)-F_\ast] \leq \frac{\nu}{\gamma+k}
   \end{equation}
   where
   \begin{equation}
       \nu:= \max\left\{ \frac{\beta^2 LK}{2(\beta c\mu -1)}, (\gamma+1)(F(\bfw_1)-F_\ast)  \right\}
   \end{equation}
\end{theorem}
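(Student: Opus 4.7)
\medskip
\noindent\textbf{Proof proposal.}

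My plan is the classical two-stage route: first derive a one-step descent recursion for the expected optimality gap, then close the $\mathcal O(1/k)$ bound by induction on $k$ using the harmonic schedule $\alpha_k=\beta/(\gamma+k)$. The two terms inside the $\max$ defining $\nu$ are engineered so that exactly one controls the inductive step and the other the base case.

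Step one (one-step descent). From $L$-Lipschitz gradient (Assumption~\ref{ass:2}) applied to the SGD update $\bfw_{k+1}=\bfw_k-\alpha_k g(\bfw_k,\bfxi_k)$, take conditional expectation in $\bfxi_k$ and invoke Assumption~\ref{ass:3}: inequality \eqref{assumptionA} produces the descent term $-\alpha_k\mu\|\nabla F(\bfw_k)\|_2^2$, while \eqref{assumptionB}--\eqref{assumptionC} combine to yield $\mathbb{E}_{\bfxi_k}[\|g(\bfw_k,\bfxi_k)\|_2^2]\le K+K_G\|\nabla F(\bfw_k)\|_2^2$. The learning-rate cap $\alpha_k\le\alpha_1\le\mu/(LK_G)$ (valid for all $k$ since $\alpha_k$ is decreasing in $k$) then gives $\alpha_k^2 LK_G/2\le\alpha_k\mu/2$, so half of the descent absorbs the harmful second-moment term. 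Applying strong convexity (Assumption~\ref{ass:1}) in the form $\|\nabla F(\bfw_k)\|_2^2\ge 2c(F(\bfw_k)-F_\ast)$, subtracting $F_\ast$, and taking total expectation produces
\[
\phi_{k+1}\;\le\;(1-\alpha_k c\mu)\,\phi_k+\tfrac12\alpha_k^2 LK,\qquad \phi_k:=\mathbb{E}[F(\bfw_k)-F_\ast].
\]

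Step two (induction). The base case $\phi_1\le\nu/(\gamma+1)$ is immediate from $(\gamma+1)(F(\bfw_1)-F_\ast)\le\nu$. For the inductive step, set $\hat k:=\gamma+k$ and suppose $\phi_k\le\nu/\hat k$. Substituting $\alpha_k=\beta/\hat k$ into the recursion,
\[
\phi_{k+1}\;\le\;\frac{(\hat k-\beta c\mu)\nu}{\hat k^{\,2}}+\frac{\beta^{2}LK}{2\hat k^{\,2}}
\;=\;\frac{(\hat k-1)\nu}{\hat k^{\,2}}-\frac{(\beta c\mu-1)\nu-\beta^{2}LK/2}{\hat k^{\,2}}.
\]
Because $\beta>1/(c\mu)$ the denominator $\beta c\mu-1$ is strictly positive, and $\nu\ge\beta^{2}LK/\bigl(2(\beta c\mu-1)\bigr)$ makes the second term nonpositive. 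For the first, the identity $(\hat k-1)(\hat k+1)=\hat k^{\,2}-1\le\hat k^{\,2}$ gives $(\hat k-1)/\hat k^{\,2}\le 1/(\hat k+1)$. Hence $\phi_{k+1}\le\nu/(\hat k+1)=\nu/(\gamma+k+1)$, closing the induction.

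The only genuine obstacle is the bookkeeping: one must verify that the two constraints on $(\beta,\gamma)$---namely $\beta>1/(c\mu)$ and $\alpha_1=\beta/(\gamma+1)\le\mu/(LK_G)$---can be simultaneously satisfied and that they are exactly what is needed in the two steps above (the former for a finite $\nu$, the latter for the descent recursion). Everything else is routine algebra, and the structure of $\nu$ as a $\max$ reflects precisely which of the two constraints is binding on the inductive step versus the base case.
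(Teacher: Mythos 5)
Your proposal is correct and follows essentially the same route the paper uses for the preconditioned analogue (Theorem~\ref{2ndmainthm}, whose proof specializes to this statement when $\bfM=\bfI$): the one-step recursion $\phi_{k+1}\le(1-\alpha_k c\mu)\phi_k+\tfrac12\alpha_k^2LK$ from the smoothness/moment lemmas, followed by induction with the same splitting of $(\hat k-\beta c\mu)\nu/\hat k^2$ and the inequality $(\hat k-1)(\hat k+1)\le\hat k^2$. The roles you assign to the two branches of the $\max$ defining $\nu$ (base case versus inductive step) match the paper's argument exactly.
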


Under the assumption of strong convexity, the optimality gap can be bounded at any point by the $2$-norm squared of the gradient of the objective function at that particular point. That is,
\[
2c(F(\bfw)-F_\ast)\leq ||\nabla F(\bfw)||_2^2 \text{ for all } \bfw\in\mathbb{R}^d
\]

As before, $F$ has a unique minimizer, denoted as $\bfw^\ast \in \mathbb{R}^d$ with $F_\ast:= F(\bfw^\ast)$.

Previously, the optimality gap was bounded by the $2$-norm of the gradient of the objective function squared. Here, however, the optimality gap is bounded by the $\bfM$-norm of the gradient of the objective function squared. That is,
\[
2\hat{c}(F(\bfw)-F(\bfw_\ast))\leq ||\nabla F(\bfw)||_{\bfM^{-1}}^2
\]
This result is used several times in the upcoming proofs. We repeat Lemma \ref{lemma:bound} here for convenience below:

\begin{lemma}
Let $F$ be twice differentiable and $\bfM^{-1}=\bfP\bfP^\top$. Then:
(i) $\nabla F$ is $\bfM$-Lipschitz with constant $\hat L$  
$\iff$ all eigenvalues of $\bfP^\top\nabla^2F(\bfw)\bfP$ are $\le \hat L$;
(ii) $F$ is $\bfM$-strongly convex with constant $\hat c$  
$\iff$ all eigenvalues of $\bfP^\top\nabla^2F(\bfw)\bfP$ are $\ge \hat c$.
\end{lemma}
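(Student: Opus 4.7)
The plan is to reduce both claims to the standard Euclidean characterizations of smoothness and strong convexity via a linear change of variables. Since $\bfM\succ 0$, we may take $\bfP$ square and invertible (e.g.\ $\bfP=\bfM^{-1/2}$), so that inverting the identity $\bfM^{-1}=\bfP\bfP^\top$ yields $\bfP^\top\bfM\bfP=\bfI$. I would then define the transformed objective $G(\bfv):=F(\bfP\bfv)$, for which a direct calculation gives $\nabla G(\bfv)=\bfP^\top\nabla F(\bfP\bfv)$ and $\nabla^2 G(\bfv)=\bfP^\top\nabla^2 F(\bfP\bfv)\bfP$.

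Next I would record two norm identities that carry the $\bfM$–geometry over to the Euclidean geometry in $\bfv$: for any $\bfd\in\mathbb R^d$, $\|\bfP\bfd\|_{\bfM}^2=\bfd^\top(\bfP^\top\bfM\bfP)\bfd=\|\bfd\|_2^2$, and for any $\bfg\in\mathbb R^d$, $\|\bfg\|_{\bfM^{-1}}^2=\bfg^\top(\bfP\bfP^\top)\bfg=\|\bfP^\top\bfg\|_2^2$. With the substitution $\bfw=\bfP\bfv$, $\overline{\bfw}=\bfP\overline{\bfv}$, the defining inequality in Assumption~\ref{assumption:44} becomes verbatim the standard strong convexity of $G$ in $\bfv$ with the same constant $\hat c$; likewise, Assumption~\ref{assumption:55} transforms to the ordinary Lipschitz condition on $\nabla G$ with the same constant $\hat L$, since $\|\nabla F(\overline{\bfw})-\nabla F(\bfw)\|_{\bfM^{-1}}=\|\nabla G(\overline{\bfv})-\nabla G(\bfv)\|_2$ and $\|\overline{\bfw}-\bfw\|_{\bfM}=\|\overline{\bfv}-\bfv\|_2$.

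Having reduced to the Euclidean setting, I would then apply the classical second-derivative equivalences used in Remark~\ref{rmk:condNum}: strong convexity of $G$ with constant $\hat c$ holds iff $\nabla^2 G(\bfv)\succeq \hat c\,\bfI$ for all $\bfv$, which unwinds to $\bfP^\top\nabla^2 F(\bfw)\bfP\succeq \hat c\,\bfI$ for all $\bfw$; the analogous upper-bound characterization $\nabla^2 G(\bfv)\preceq \hat L\,\bfI$ for $\hat L$–Lipschitz $\nabla G$ unwinds to $\bfP^\top\nabla^2 F(\bfw)\bfP\preceq \hat L\,\bfI$. Since $\bfP^\top\nabla^2 F(\bfw)\bfP$ is symmetric, these matrix inequalities are precisely the eigenvalue statements in (i) and (ii), and the bijection $\bfv\leftrightarrow\bfw$ means the bounds hold for all $\bfw$ iff they hold for all $\bfv$. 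The only real subtlety is verifying the two norm identities from the single factorization $\bfM^{-1}=\bfP\bfP^\top$; beyond that, the argument is pure bookkeeping around the linear change of coordinates $\bfw=\bfP\bfv$.
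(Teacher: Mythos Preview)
Your proposal is correct and follows essentially the same route as the paper: both introduce the linear change of variables $\bfw=\bfP\bfv$, define the transformed objective $G(\bfv)=F(\bfP\bfv)$, verify the two norm identities that convert the $\bfM$– and $\bfM^{-1}$–norms into Euclidean norms, and then invoke the standard Hessian characterizations of smoothness and strong convexity for $G$. Your write-up is in fact a bit more explicit about deriving $\bfP^\top\bfM\bfP=\bfI$ and the bijectivity of $\bfw\leftrightarrow\bfv$, but the argument is identical in substance.
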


\begin{proof}
We consider a change of parameter as used in preconditioning. Let $\bfw=\bfP\bfz$ and $\overline{\bfw}=\bfP\overline{\bfz}$. Then $\bfw-\overline{\bfw}=\bfP(\bfz-\overline{\bfz})$ which gives $ \bfP^{-1}(\bfw-\overline{\bfw})=\bfz-\overline{\bfz}$. Define $f(\bfz)=F(\bfP\bfz)$. Then $\nabla_\bfz f(\bfz)=\bfP^\top \nabla_\bfw F(\bfw)$
and  $\nabla_\bfz^2 f(z) =\bfP^\top \nabla_\bfw^2 F(\bfw) \bfP$. Hence 
\[||\nabla f(\bfz)-\nabla f(\overline{\bfz})||_2= 
\|\bfP^\top \nabla_\bfw F(\bfw) -\bfP^\top \nabla_\bfw F(\overline{\bfw}) \|_2 =
\|\nabla_\bfw F(\bfw) - \nabla_\bfw F(\overline{\bfw}) \|_{\bfM^{-1}}.
\]
Therefore, the $\bfM$-Lipschitz continuity of the gradient for $F$ is equivalent to the Lipschitz continuity of the gradient for $f$, which is equivalent to 
that $\nabla_z^2 f(z)$, i.e.  $\bfP^\top \nabla_w^2 F(\bfw) \bfP$, has eigenvalues bounded above by $\hat L$.   Similarly, the statement on M-strong convexity follows from 
\[
 F(\bfw)+\nabla F(\bfw)^\top (\overline{\bfw}-\bfw)+\frac{1}{2} \hat{c}||\overline{\bfw}-\bfw||_{\bfM}^2 =
f(\bfz )+\nabla_\bfz f(\bfz)^\top (\overline{\bfz}-\bfz)+\frac{1}{2} \hat{c}||\overline{\bfz}-\bfz||^2_2 .
\]
\end{proof}


We may assume $\hat L$ and  $\hat c$ are respectively the maximum and the minimum of the eigenvalues of $\bfP^\top\nabla^2 F(\bfw)\bfP$ for all $\bfw$.  
So $\frac{\hat L}{\hat c}$ plays the role of the condition number of the preconditioned matrix $\bfP^\top\nabla^2 F(\bfw)\bfP$. If we assume $\bfM^{-1}=\bfP \bfP^\top$ is such that $\frac{\hat L}{\hat c}$ is smaller than $\frac{\ L}{\ c}$, it basically reduces the condition number. We will demonstrate that this accelerates the speed of convergence.

An important lemma comes directly from this assumption.
\begin{lemma}
    Under the assumption of $\bfM$-Lipschitz continuity of gradient, 
    \begin{equation}
     F(\bfw) \leq F(\overline{\bfw})+\nabla F(\overline{\bfw})^\top (\bfw-\overline{\bfw}) + \frac{1}{2}\hat{L}||\bfw-\overline{\bfw}||_{\bfM}^2
\end{equation}
\end{lemma}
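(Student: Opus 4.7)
The plan is to lift the classical Euclidean descent lemma into the $\bfM$--geometry by exploiting the duality between $\|\cdot\|_{\bfM}$ and $\|\cdot\|_{\bfM^{-1}}$. I would first write the fundamental theorem of calculus along the segment from $\overline{\bfw}$ to $\bfw$,
\[
F(\bfw)-F(\overline{\bfw}) \;=\; \int_0^1 \nabla F\!\left(\overline{\bfw}+t(\bfw-\overline{\bfw})\right)^{\!\top}(\bfw-\overline{\bfw})\,\mathrm{d}t,
\]
then add and subtract $\nabla F(\overline{\bfw})^\top(\bfw-\overline{\bfw})$ to isolate the remainder. The remainder is bounded by the dual Cauchy--Schwarz inequality $|a^\top b|\le \|a\|_{\bfM^{-1}}\|b\|_{\bfM}$, followed by the $\bfM$--Lipschitz hypothesis, which gives $\|\nabla F(\overline{\bfw}+t(\bfw-\overline{\bfw}))-\nabla F(\overline{\bfw})\|_{\bfM^{-1}}\le \hat L\, t\,\|\bfw-\overline{\bfw}\|_{\bfM}$. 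Integrating $\hat L\, t\,\|\bfw-\overline{\bfw}\|_{\bfM}^2$ over $t\in[0,1]$ produces the factor $1/2$ and the stated inequality.

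A cleaner alternative, which I would actually prefer, is to reuse the change of variables already set up in the proof of Lemma~\ref{lemma:bound}. Writing $\bfM^{-1}=\bfP\bfP^\top$ with $\bfP$ invertible and setting $f(\bfz):=F(\bfP\bfz)$, the $\bfM$--Lipschitz condition on $\nabla F$ becomes plain Euclidean Lipschitz continuity of $\nabla f$ with the \emph{same} constant $\hat L$. The standard descent lemma then yields $f(\bfz)\le f(\overline{\bfz}) + \nabla f(\overline{\bfz})^\top(\bfz-\overline{\bfz}) + \tfrac12\hat L\,\|\bfz-\overline{\bfz}\|_2^2$, and mapping back via $\bfw=\bfP\bfz$ converts $\nabla f(\overline{\bfz})^\top(\bfz-\overline{\bfz})$ into $\nabla F(\overline{\bfw})^\top(\bfw-\overline{\bfw})$ (using $\nabla f(\bfz)=\bfP^\top\nabla F(\bfw)$) and $\|\bfz-\overline{\bfz}\|_2^2$ into $\|\bfw-\overline{\bfw}\|_{\bfM}^2$ (since $\bfP^{-\top}\bfP^{-1}=\bfM$). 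This delivers the claim with no integration at all, at the price of invoking the classical descent lemma as a black box.

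The main obstacle is essentially bookkeeping: one must track carefully which vectors live in the primal space (measured by $\|\cdot\|_{\bfM}$) and which in the dual (measured by $\|\cdot\|_{\bfM^{-1}}$), so that Cauchy--Schwarz produces matching norms and the change of variables preserves the constant $\hat L$ rather than introducing stray factors of $\kappa(\bfP)$. Once that identification is pinned down, both routes reduce to the one-dimensional integration of a Lipschitz derivative and no new analytic ingredients are needed beyond what already appears in the proof of Lemma~\ref{lemma:bound}.
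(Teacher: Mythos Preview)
Your first route---fundamental theorem of calculus along the segment, add/subtract $\nabla F(\overline{\bfw})^\top(\bfw-\overline{\bfw})$, dual Cauchy--Schwarz $|a^\top b|\le \|a\|_{\bfM^{-1}}\|b\|_{\bfM}$, then the $\bfM$--Lipschitz bound and integrate $t$---is exactly what the paper does; the only cosmetic difference is that the paper writes the dual pairing explicitly via the factorization $\bfM^{-1}=\bfP\bfP^\top$, inserting $\bfP\bfP^{-1}$ and applying the ordinary Cauchy--Schwarz in $\|\cdot\|_2$, which is the same inequality you state abstractly.

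Your second route (pull back to $f(\bfz)=F(\bfP\bfz)$, invoke the classical Euclidean descent lemma, push forward) is a genuinely different and slightly cleaner packaging that the paper does not use. It trades the one-line integration for a black-box citation of the Euclidean result plus the norm identifications $\|\bfz-\overline{\bfz}\|_2=\|\bfw-\overline{\bfw}\|_{\bfM}$ and $\nabla f(\overline{\bfz})^\top(\bfz-\overline{\bfz})=\nabla F(\overline{\bfw})^\top(\bfw-\overline{\bfw})$ already established in Lemma~\ref{lemma:bound}; this makes the dependence on the change of metric more transparent and avoids re-deriving the $1/2$, at the cost of hiding the integral estimate. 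Either argument is complete.
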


\begin{proof}
    Consider the following,
    \begin{align*}
        F(\bfw) &= F(\overline{\bfw}) + \int_{0}^{1} \left( \nabla F(\overline{\bfw} + t(\bfw-\overline{\bfw}) \right)^\top \bfP\bfP^{-1}(\bfw-\overline{\bfw})\dd{t} \\
        &= F(\overline{\bfw})+ \nabla F(\overline{\bfw})^\top(\bfw-\overline{\bfw})+\int_0^1 \left( \nabla F(\overline{\bfw}+t(\bfw-\overline{\bfw}))-\nabla F(\overline{\bfw})\right)^\top \bfP\bfP^{-1} (\bfw-\overline{\bfw})\dd{t} \\
        &\leq F(\overline{\bfw})+\nabla F(\overline{\bfw})^\top(\bfw-\overline{\bfw}) + \int_0^1 \hat{L} ||t(\bfw-\overline{\bfw})||_{\bfM}||\bfw-\overline{\bfw}||_{\bfM}\dd{t} 
    \end{align*}
    which gives us our consequence that was to be shown.
\end{proof}

Notice that combining the variance definition (Eq.  \ref{var2}) with Assumption \ref{assumption:66}, we have the following
\begin{equation}\label{eq:9}
       \mathbb{E}_{\bfxi_k}[||g(\bfw_k,\bfxi_k)||_{\bfM^{-1}}^2] \leq K_G ||\nabla F(\bfw_k)||_{\bfM^{-1}}^2 + K \text{ with } K_G := K_V + \mu_G^2 \geq \mu^2>0
\end{equation}

The proof for the two theorems relies on the following lemmas.

\begin{lemma} \label{lemma:1}
    Under Assumption \ref{assumption:44}, the iterates of Eq. \ref{alg:PSGD} satisfy the following inequality for all $k\in \mathbb{N}$:
    \begin{equation}
        \mathbb{E}_{\bfxi_k}[F(\bfw_{k+1})]-F(\bfw_k)\leq -\alpha_k \nabla F(\bfw_k)^\top\mathbb{E}_{\bfxi_k}[g(\bfw_k,\bfxi_k)]+\frac{1}{2}\alpha_k^2 \hat{L}\mathbb{E}_{\bfxi_k}[|| g(\bfw_k,\bfxi_k)||_{\bfM^{-1}}^2]
    \end{equation}
\end{lemma}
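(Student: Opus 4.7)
The plan is to instantiate the quadratic upper bound lemma proved immediately above (the $\bfM$-descent lemma, which follows from the $\bfM$-Lipschitz gradient condition) with $\bfw=\bfw_{k+1}$ and $\overline{\bfw}=\bfw_k$, substitute the preconditioned SGD update from \eqref{alg:PSGD}, and then take the conditional expectation over $\bfxi_k$. This is the direct $\bfM$-metric transcription of the classical descent-lemma bookkeeping in the Euclidean analysis of~\citet{bottou_optimization_2018}.

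First I would write
\[
F(\bfw_{k+1}) \;\le\; F(\bfw_k) + \nabla F(\bfw_k)^{\top}(\bfw_{k+1}-\bfw_k) + \tfrac{1}{2}\hat L\,\|\bfw_{k+1}-\bfw_k\|_{\bfM}^{2}.
\]
Next, I would substitute the update rule $\bfw_{k+1}-\bfw_k = -\alpha_k\,\bfM^{-1}\,g(\bfw_k,\bfxi_k)$. The linear term becomes $-\alpha_k\,\nabla F(\bfw_k)^{\top}\bfM^{-1}\,g(\bfw_k,\bfxi_k)$, which is exactly the $\bfM^{-1}$–inner product $-\alpha_k\,\langle\nabla F(\bfw_k),\,g(\bfw_k,\bfxi_k)\rangle_{\bfM^{-1}}$ matching the pairing used in Assumption~\ref{assumption:66}. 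For the quadratic term, I would apply the identity
\[
\|\bfM^{-1}v\|_{\bfM}^{2} \;=\; v^{\top}\bfM^{-1}\bfM\,\bfM^{-1}v \;=\; \|v\|_{\bfM^{-1}}^{2},
\]
so that $\|\bfw_{k+1}-\bfw_k\|_{\bfM}^{2} = \alpha_k^{2}\,\|g(\bfw_k,\bfxi_k)\|_{\bfM^{-1}}^{2}$.

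Finally I would take $\mathbb E_{\bfxi_k}[\cdot]$ on both sides of the resulting pointwise inequality. Because $\bfw_k$ (and hence $\nabla F(\bfw_k)$) and the learning rate $\alpha_k$ are deterministic conditional on the history through step $k$, they pass through the expectation as constants, and linearity of expectation then yields the claimed bound after rearranging $F(\bfw_k)$ to the left-hand side.

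The only place that requires any care is the $\bfM$/$\bfM^{-1}$ bookkeeping — in particular, verifying the identity that converts $\|\bfM^{-1}g\|_{\bfM}^{2}$ into $\|g\|_{\bfM^{-1}}^{2}$ and recognizing that the mixed factor $\nabla F(\bfw_k)^{\top}\bfM^{-1}\mathbb E_{\bfxi_k}[g(\bfw_k,\bfxi_k)]$ is exactly the $\bfM^{-1}$–pairing appearing in the subsequent moment assumptions. Once those identifications are made, the proof is a routine transcription of the Euclidean descent lemma into the preconditioned geometry, with no genuine obstacle beyond the change of metric.
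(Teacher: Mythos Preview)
Your proposal is correct and follows essentially the same approach as the paper: apply the $\bfM$-descent (quadratic upper bound) inequality with $\bfw=\bfw_{k+1}$, $\overline{\bfw}=\bfw_k$, substitute the preconditioned update, use $\|\bfM^{-1}g\|_{\bfM}^2=\|g\|_{\bfM^{-1}}^2$, and take $\mathbb{E}_{\bfxi_k}$. You also correctly note that the relevant hypothesis is the $\bfM$-Lipschitz gradient (Assumption~\ref{assumption:55}) rather than $\bfM$-strong convexity, despite the lemma's stated reference to Assumption~\ref{assumption:44}.
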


\begin{proof}
    Let $\bfw=\bfw_{k+1}$ and $\overline{\bfw}=\bfw_k$. Then, by Assumption \ref{assumption:44},
    \[F(\bfw_{k+1})-F(\bfw_k)\leq \nabla F(\bfw_k)^\top(\bfw_{k+1}-\bfw_k)+\frac{1}{2}\hat{L}||\bfw_{k+1}-\bfw_k||^2_\bfM\]
    
    Recalling that Eq. \ref{alg:PSGD} gives $\bfw_{k+1}=\bfw_k-\alpha_k \bfM^{-1} g(\bfw_k,\bfxi_k)$, we then have,
    \begin{align*}
        F(\bfw_{k+1})-F(\bfw_k) &\leq \nabla F(\bfw_k)^\top(-\alpha_k \bfM^{-1}g(\bfw_k,\bfxi_k)) + \frac{1}{2}\hat{L}||-\alpha_k \bfM^{-1}g(\bfw_k,\bfxi_k)||^2_{\bfM} \\
        &\leq -\alpha_k \nabla F(\bfw_k)^\top \bfM^{-1} g(\bfw_k,\bfxi_k)+\frac{1}{2}\alpha_k^2 \hat{L}||\bfM^{-1} g(\bfw_k,\bfxi_k)||^2_{\bfM} \\
        &\leq -\alpha_k \nabla F(\bfw_k)^\top \bfM^{-1}g(\bfw_k,\bfxi_k) + \frac{1}{2} \alpha_k^2 \hat{L} || g(\bfw_k,\bfxi_k)||_{\bfM^{-1}}^2
    \end{align*}
    Take the expectation of both sides

    \[\mathbb{E}_{\bfxi_k}[F(\bfw_{k+1})]-F(\bfw_k)\leq -\alpha_k \nabla F(\bfw_k)^\top \bfM^{-1}\mathbb{E}_{\bfxi_k}[g(\bfw_k,\bfxi_k)]+\frac{1}{2}\alpha_k^2 \hat{L}\mathbb{E}_{\bfxi_k}[||g(\bfw_k,\bfxi_k)||_{\bfM^{-1}}^2]\]
    Thus, the desired result is achieved.
\end{proof}

  \begin{lemma}\label{lemma:2}
      Under Assumptions \ref{assumption:44} and \ref{assumption:55}, the iterates of Eq. \ref{alg:PSGD} satisfy the following inequalities for all $k\in\mathbb{N}$:
      \begin{align}
          \mathbb{E}_{\bfxi_k}[F(\bfw_{k+1})] - F(\bfw_k) &\leq -\mu \alpha_k ||\nabla F(\bfw_k)||_{\bfM^{-1}}^2 + \frac{1}{2} \alpha_k^2 \hat{L} \mathbb{E}_{\bfxi_k}\left[||g(\bfw_k,\bfxi_k)||_{\bfM^{-1}}^2\right] \\
          &\leq -(\mu - \frac{1}{2}\alpha_k \hat{L}K_G)\alpha_k ||\nabla F(\bfw_k)||_{\bfM^{-1}}^2 + \frac{1}{2}\alpha_k^2 \hat{L}K
      \end{align}
  \end{lemma}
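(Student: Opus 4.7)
The plan is to chain Lemma~\ref{lemma:1} with the moment bounds from Assumption~\ref{assumption:66}. The first inequality should follow by lower-bounding the inner-product term $\nabla F(\bfw_k)^\top \bfM^{-1} \mathbb{E}_{\bfxi_k}[g(\bfw_k,\bfxi_k)]$, which equals $\langle \nabla F(\bfw_k), \mathbb{E}_{\bfxi_k}[g(\bfw_k,\bfxi_k)]\rangle_{\bfM^{-1}}$, using \eqref{eq:constmu}. Since this term appears with a negative sign of $-\alpha_k$ on the right-hand side of Lemma~\ref{lemma:1}, the inequality flips and we pick up $-\mu\alpha_k\|\nabla F(\bfw_k)\|_{\bfM^{-1}}^2$, giving the first desired bound exactly.

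For the second inequality, I would then bound the stochastic second-moment term $\mathbb{E}_{\bfxi_k}[\|g(\bfw_k,\bfxi_k)\|_{\bfM^{-1}}^2]$ using the consequence \eqref{eq:9} of Assumption~\ref{assumption:66}, namely $\mathbb{E}_{\bfxi_k}[\|g(\bfw_k,\bfxi_k)\|_{\bfM^{-1}}^2] \le K_G\|\nabla F(\bfw_k)\|_{\bfM^{-1}}^2 + K$. Substituting into the first inequality and collecting the $\|\nabla F(\bfw_k)\|_{\bfM^{-1}}^2$ terms yields
\[
  -\mu\alpha_k \|\nabla F(\bfw_k)\|_{\bfM^{-1}}^2 + \tfrac{1}{2}\alpha_k^2\hat L\bigl(K_G\|\nabla F(\bfw_k)\|_{\bfM^{-1}}^2 + K\bigr)
  = -\bigl(\mu - \tfrac{1}{2}\alpha_k\hat L K_G\bigr)\alpha_k\|\nabla F(\bfw_k)\|_{\bfM^{-1}}^2 + \tfrac{1}{2}\alpha_k^2\hat L K,
\]
which is precisely the claimed second inequality.

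There is no real obstacle here: both inequalities are direct algebraic consequences of Lemma~\ref{lemma:1} combined with two moment assumptions. The only subtlety worth flagging is notational — one must interpret the inner product in Lemma~\ref{lemma:1} as the $\bfM^{-1}$-inner product (as its proof makes clear, since $\nabla F(\bfw_k)^\top \bfM^{-1} \mathbb{E}_{\bfxi_k}[g] = \langle \nabla F(\bfw_k), \mathbb{E}_{\bfxi_k}[g]\rangle_{\bfM^{-1}}$) so that \eqref{eq:constmu} from Assumption~\ref{assumption:66} can be applied directly. No additional estimates, Cauchy–Schwarz manipulations, or regrouping of terms are needed.
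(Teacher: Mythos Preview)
Your proposal is correct and follows essentially the same argument as the paper: apply Lemma~\ref{lemma:1}, use \eqref{eq:constmu} to bound the inner-product term for the first inequality, then invoke \eqref{eq:9} and regroup to obtain the second. Your observation that the proof actually relies on Assumption~\ref{assumption:66} (not just Assumptions~\ref{assumption:44}--\ref{assumption:55} as stated in the lemma) is well taken --- the paper's own proof uses those moment bounds but cites the assumptions somewhat loosely.
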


  \begin{proof}
      By Lemma \ref{lemma:1} and Assumption \ref{assumption:55}, it follows that
      \begin{align*}
          \mathbb{E}_{\bfxi_k}[F(\bfw_{k+1})]-F(\bfw_k) &\leq -\alpha_k \mu ||\nabla F(\bfw_k)||_{\bfM^{-1}}^2 + \frac{1}{2} \alpha_k^2 \hat{L}\mathbb{E}_{\bfxi_k}\left[|| g(\bfw_k,\bfxi_k)||_{\bfM^{-1}}^2\right] \\
          &\leq -\alpha_k \mu ||\nabla F(\bfw_k)||_{\bfM^{-1}}^2 + \frac{1}{2}\alpha_k^2 \hat{L}\left(K_G||\nabla F(\bfw_k)||_{\bfM^{-1}}^2 + K\right) \\
          &\leq -\left(\mu - \frac{1}{2} \alpha_k \hat{L}K_G\right)\alpha_k ||\nabla F(\bfw_k)||_{\bfM^{-1}}^2 + \frac{1}{2}\alpha_k^2 \hat{L}K
      \end{align*}
      Hence, we have the desired inequalities.
  \end{proof}

The final lemma necessary is as follows.
 \begin{lemma}\label{lemma:3}
        Under assumptions \ref{assumption:44}, \ref{assumption:55}, and \ref{assumption:66} (with $F_\ast$ being the minimum of $F$), suppose Eq. \ref{alg:PSGD} is run with a learning rate sequence such that for all $k\in\mathbb{N}$, assume $\alpha_k\leq \frac{\mu}{\hat{L}K_G}$. (Note that $\alpha_k$ could be constant for all $k\in\mathbb{N}$). Then the following inequality holds
        \begin{equation}\label{lemma:3result}
            \mathbb{E}[F(\bfw_{k+1})-F_\ast]\leq (1-\alpha_k\hat{c}\mu)\mathbb{E}[F(\bfw_k)-F_\ast]+\frac{1}{2}\alpha_k^2\hat{L}K
        \end{equation}
    \end{lemma}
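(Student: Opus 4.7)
The plan is to combine the one-step descent bound already established in Lemma \ref{lemma:2} with the $\bfM$-strong convexity consequence
$$2\hat c\bigl(F(\bfw) - F_\ast\bigr) \;\le\; \|\nabla F(\bfw)\|_{\bfM^{-1}}^2$$
stated in the text just before the lemma. The learning-rate restriction $\alpha_k \le \mu/(\hat L K_G)$ is precisely what is needed to turn the second bound of Lemma \ref{lemma:2} into a clean geometric contraction plus a noise term.

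First I would start from the second inequality in Lemma \ref{lemma:2} and apply the hypothesis on $\alpha_k$ to lower-bound the gradient coefficient: since $\tfrac{1}{2}\alpha_k \hat L K_G \le \tfrac{\mu}{2}$, we have $\mu - \tfrac{1}{2}\alpha_k \hat L K_G \ge \tfrac{\mu}{2}$, which weakens the inequality to
$$\mathbb{E}_{\bfxi_k}[F(\bfw_{k+1})] - F(\bfw_k) \;\le\; -\tfrac{1}{2}\alpha_k\mu\,\|\nabla F(\bfw_k)\|_{\bfM^{-1}}^2 + \tfrac{1}{2}\alpha_k^2\hat L K.$$
Next I would invoke the $\bfM$-strong convexity inequality above to replace $\|\nabla F(\bfw_k)\|_{\bfM^{-1}}^2$ by $2\hat c(F(\bfw_k) - F_\ast)$ in the dominant term, subtract $F_\ast$ from both sides, and rearrange to obtain the conditional recursion
$$\mathbb{E}_{\bfxi_k}[F(\bfw_{k+1}) - F_\ast] \;\le\; (1 - \alpha_k\hat c\mu)\bigl(F(\bfw_k) - F_\ast\bigr) + \tfrac{1}{2}\alpha_k^2 \hat L K.$$
Finally, taking total expectation via the tower property $\mathbb{E}[\mathbb{E}_{\bfxi_k}[\cdot]] = \mathbb{E}[\cdot]$ (and using that the deterministic factor $1-\alpha_k\hat c\mu$ pulls out) yields \eqref{lemma:3result}.

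The argument is essentially bookkeeping and mirrors the Euclidean proof in \citet{bottou_optimization_2018} transported into the $\bfM$-metric. The only subtlety is ensuring that the gradient lower bound is used in its preconditioned form $\|\nabla F(\bfw)\|_{\bfM^{-1}}^2$ rather than the Euclidean $\|\nabla F(\bfw)\|_2^2$; this follows from the change of variables $\bfw = \bfP\bfz$ introduced in the proof of Lemma \ref{lemma:bound}, which converts $\bfM$-strong convexity of $F$ into Euclidean strong convexity of $f(\bfz) := F(\bfP\bfz)$ with the same constant $\hat c$ and $\|\nabla_\bfz f(\bfz)\|_2 = \|\nabla F(\bfw)\|_{\bfM^{-1}}$. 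One sanity check worth performing is that $\alpha_k \hat c\mu \le \hat c/\hat L \le 1$ (using $K_G \ge \mu^2$ from Assumption \ref{assumption:66} and $\hat c \le \hat L$ from Lemma \ref{lemma:bound}), so the contraction factor $1 - \alpha_k\hat c\mu$ lies in $[0,1)$ and the recursion is genuinely contractive, as needed for the downstream proofs of Theorems \ref{1stmainthm}–\ref{2ndmainthm}.
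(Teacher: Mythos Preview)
Your proposal is correct and follows essentially the same route as the paper: start from the second inequality of Lemma~\ref{lemma:2}, use $\alpha_k \le \mu/(\hat L K_G)$ to reduce the coefficient to $\tfrac{1}{2}\alpha_k\mu$, invoke the $\bfM$-strong-convexity consequence $2\hat c(F(\bfw_k)-F_\ast)\le \|\nabla F(\bfw_k)\|_{\bfM^{-1}}^2$, subtract $F_\ast$, and take total expectation via the tower property. The paper's proof is terser (it jumps straight to the conditional bound $\mathbb{E}_{\bfxi_k}[F(\bfw_{k+1})]-F(\bfw_k)\leq -\hat c\alpha_k\mu(F(\bfw_k)-F_\ast)+\tfrac12\alpha_k^2\hat L K$), but your expanded steps and the sanity check on $1-\alpha_k\hat c\mu\in[0,1)$ are consistent with it.
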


\begin{proof}
    Given the assumptions and using Lemma \ref{lemma:2}, we have 
     $\mathbb{E}_{\bfxi_k}[F(\bfw_{k+1})]-F(\bfw_k)\leq -\hat{c}\alpha_k\mu(F(\bfw_k)-F_\ast)+\frac{1}{2}\alpha_k^2 \hat{L}K$.  Subtract $F_\ast$ from both sides and take the total expectation. We denote this total expectation as $\mathbb{E}[\cdot]$, which represents the expected value taken with respect to all random variables. That is, $\mathbb{E}[F(\bfw_k)]=\mathbb{E}_{\xi_1}\mathbb{E}_{\xi_2}\hdots \mathbb{E}_{\xi_{k-1}}[F(\bfw_k)]$.  
     \begin{align*}
        \mathbb{E}[\mathbb{E}_{\bfxi_k}[F(\bfw_{k+1})]-F(\bfw_k)-F_\ast] &\leq \mathbb{E}\left[-\hat{c}\alpha_k\mu (F(\bfw_k)-F_\ast)+\frac{1}{2}\alpha_k^2 \hat{L}K - F_\ast\right] \\
        \mathbb{E}[\mathbb{E}_{\bfxi_k}[F(\bfw_{k+1})]-F_\ast] &\leq \mathbb{E}\left[-\hat{c}\alpha_k\mu (F(\bfw_k)-F_\ast) - F(\bfw_k)-F_\ast\right] + \frac{1}{2}\alpha_k^2 \hat{L}K \\
        &\leq \mathbb{E}\left[-\hat{c}\alpha_k\mu F(\bfw_k)+\hat{c}\alpha_k\mu F_\ast + F(\bfw_k) - F_\ast\right] + \frac{1}{2}\alpha_k^2 \hat{L}K \\
        &\leq (1-\hat{c}\alpha_k\mu)\mathbb{E}[F(\bfw_k)-F_\ast] + \frac{1}{2} \alpha_k^2 \hat{L}K
    \end{align*}
    which is our desired inequality (\ref{lemma:3result}).
\end{proof}

\subsection{Proofs of main theorems}
\subsubsection{Proof of Theorem \ref{1stmainthm}}
\begin{proof}
    Using Lemma \ref{lemma:2}, we have for all $k\in\mathbb{N}$:
    \begin{align*}
        \mathbb{E}_{\bfxi_k}[F(\bfw_{k+1})]-F(\bfw_k) &\leq -(\mu - \frac{1}{2} \overline{\alpha} \hat{L}K_G)\overline{\alpha} ||\nabla F(\bfw_k)||_{\bfM^{-1}}^2 + \frac{1}{2}\overline{\alpha}^2 \hat{L}K \\
        &\leq -\left(\mu - \frac{1}{2} \left( \frac{\mu}{\hat{L}K_G} \right) \hat{L}K_G\right) \overline{\alpha} ||\nabla F(\bfw_k)||_{\bfM^{-1}}^2 + \frac{1}{2} \overline{\alpha}^2 \hat{L}K \\
        &= -\frac{1}{2}\overline{\alpha} \mu ||\nabla F(\bfw_k)||_{\bfM^{-1}}^2 + \frac{1}{2}\overline{\alpha}^2 \hat{L}K \\
        &\leq -\frac{1}{2}\overline{\alpha}\mu [2\hat{c}(F(\bfw_k)-F(\bfw_\ast))] + \frac{1}{2}\overline{\alpha}^2 \hat{L}K \\
        &\leq -\overline{\alpha}\hat{c}\mu(F(\bfw_k)-F_\ast) + \frac{1}{2}\overline{\alpha}^2 \hat{L}K
    \end{align*}

    Now, subtract the constant $\frac{\overline{\alpha}\hat{L}K}{2\hat{c}\mu}$ from both sides of inequality (Eq. \ref{lemma:3result})
    \begin{align}
        \mathbb{E}[F(\bfw_{k+1})-F_\ast] - \frac{\overline{\alpha}\hat{L}K}{2\hat{c}\mu} &\leq (1-\overline{\alpha}\hat{c}\mu) \mathbb{E}[F(\bfw_k)-F_\ast] + \frac{1}{2}\overline{\alpha}\hat{L}K - \frac{\overline{\alpha}\hat{L}K}{2\hat{c}\mu} \\
        &= (1-\overline{\alpha}\hat{c}\mu) \left( \mathbb{E}[F(\bfw_k)-F_\ast] - \frac{\overline{\alpha}\hat{L}K}{2\hat{c}\mu} \right)
    \end{align}

    We must now notice the following chain of inequalities.
    \[0<\overline{\alpha}\hat{c}\mu\leq \frac{\hat{c}\mu^2}{\hat{L}K_G}\]
    This inequality holds by the theorem assumption that $0<\overline{\alpha}\leq \frac{\mu}{\hat{L}K_G}$.
\[\frac{\hat{c}\mu^2}{\hat{L}K_G} \leq \frac{\hat{c}\mu^2}{\hat{L}\mu^2}=\frac{\hat{c}}{\hat{L}}\]
    This inequality holds by (\ref{eq:9}) from Assumption \ref{assumption:66}.

    Now, note that since $\hat{c}\leq \hat{L}$, it follows that $\frac{\hat{c}}{\hat{L}}\leq 1$.
    The result thus follows by applying $\ref{lemma:2}$ repeatedly through iteration $k\in \mathbb{N}$. 
\end{proof}

\begin{corollary}
If $g(\bfw_k,\bfxi_k)$ is an unbiased estimate of $\nabla F(w_k)$, and the variance of $g(\bfw_k,\bfxi_k)$ is bounded by a constant $K$ independent of  $\nabla F(\bfw_k)$, Then for a fixed learning rate bounded by $ \frac{K_G}{\hat{L}K_G}$, $ \bbE[F(\bfw_k)-F_\ast]$ decreases to below $ \frac{\overline{\alpha}\hat{L}K}{2\hat{c}\mu}$ at the rate of  $\frac{\hat{c}}{\hat{L}}$.
\end{corollary}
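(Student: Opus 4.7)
The plan is to read this result off as an immediate specialization of Theorem~\ref{1stmainthm}. First I would translate the two simplifying hypotheses into the constants of Assumption~\ref{assumption:66}: unbiasedness $\mathbb{E}_{\bfxi_k}[g(\bfw_k,\bfxi_k)] = \nabla F(\bfw_k)$ makes both \eqref{eq:constmu} and \eqref{eqn:constmuG} hold with equality, so one may take $\mu = \mu_G = 1$; and a variance bound that is a pure constant forces $K_V = 0$ in \eqref{eqn:varGrad}, whence $K_G = K_V + \mu_G^2 = 1$. This reduction exploits the full strength of both assumptions and leaves only $\hat c$, $\hat L$, and $K$ as the surviving problem constants.

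Next I would substitute these values into the statement of Theorem~\ref{1stmainthm}. The admissible step-size bound $\mu/(\hat L K_G)$ collapses to $1/\hat L$, which matches the corollary's fraction $K_G/(\hat L K_G)$. With $\mu = K_G = 1$, the estimate in \eqref{eqn:1stOptGap} specializes to
\[
\mathbb{E}[F(\bfw_k) - F_\ast] \;\leq\; \frac{\overline\alpha\,\hat L\,K}{2\hat c} \;+\; (1 - \overline\alpha\,\hat c)^{k-1}\!\left(F(\bfw_1) - F_\ast - \frac{\overline\alpha\,\hat L\,K}{2\hat c}\right),
\]
so the iterates contract geometrically to the stated noise floor $\overline\alpha\,\hat L\,K/(2\hat c)$. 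Taking the maximal admissible step size $\overline\alpha = 1/\hat L$ turns the contraction factor $1 - \overline\alpha\,\hat c$ into $1 - \hat c/\hat L$, which is the geometric rate at which the gap above the floor closes---precisely the ``rate of $\hat c/\hat L$'' asserted in the corollary.

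Since Theorem~\ref{1stmainthm} already carries the analytical load, no real obstacle remains; the proof is purely a substitution and a verification of corner cases. The only care needed is bookkeeping: confirming that the denominator $\mu$ in $\overline\alpha\,\hat L\,K/(2\hat c\mu)$ drops out under unbiasedness, that the slightly compressed notation $K_G/(\hat L K_G)$ is just a rewriting of the admissible-rate bound with $K_G = 1$, and that the chain $0 < \overline\alpha\,\hat c \le \hat c/\hat L \le 1$ used in the parent theorem still holds so that the geometric factor $(1-\overline\alpha\,\hat c)^{k-1}$ is a genuine contraction.
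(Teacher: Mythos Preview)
Your proposal is correct and matches the paper's approach: the paper states this corollary immediately after the proof of Theorem~\ref{1stmainthm} with no separate proof, treating it as a direct specialization, and your substitution $\mu=\mu_G=1$, $K_V=0$ (hence $K_G=1$) into \eqref{eqn:1stOptGap} is exactly that specialization. The only cosmetic point is that the corollary's stated floor $\overline\alpha\hat LK/(2\hat c\mu)$ retains the symbol $\mu$, so you need not argue that it ``drops out''---it is simply equal to $1$ under unbiasedness, and your reading of the step-size bound $K_G/(\hat LK_G)=1/\hat L$ and the rate $\hat c/\hat L$ as the per-step decrement in the contraction factor $1-\overline\alpha\hat c$ at $\overline\alpha=1/\hat L$ is the intended one.
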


\subsubsection{Proof of Theorem \ref{2ndmainthm}}
\begin{proof}
    Since the learning rates are diminishing and by the theorem statement, we have $\alpha_k\hat{L}K_G\leq \alpha_1 \hat{L}K_G\leq \mu$ for all $k\in\mathbb{N}$. By Lemma \ref{lemma:2} and Assumption \ref{assumption:66},
    \begin{align*}
        \mathbb{E}_{\bfxi_k}[F(\bfw_{k+1})] - F(\bfw_k) &\leq -(\mu - \frac{1}{2}\alpha_k \hat{L}K_G)\alpha_k || \nabla F(\bfw_k)||_{\bfM^{-1}}^2 + \frac{1}{2}\alpha_k^2 \hat{L}K \\ 
        &\leq -(\mu - \frac{1}{2}\mu)\alpha_k || \nabla F(\bfw_k)||_{\bfM^{-1}}^2 + \frac{1}{2} \alpha_k^2 \hat{L}K \\
        &\leq -\alpha_k \mu \hat{c} (F(\bfw_k)-F_\ast) + \frac{1}{2} \alpha_k^2 \hat{L}K
    \end{align*}
By Lemma \ref{lemma:3}, using (\ref{lemma:3result}), we have 
\begin{equation*}
    \mathbb{E}[F(\bfw_{k+1})-F_\ast] \leq (1-\alpha_k\hat{c}\mu)\mathbb{E}[F(\bfw_k)-F_\ast] + \frac{1}{2} \alpha_k^2 \hat{L}K
\end{equation*}

\noindent    Now, we prove the convergence result via induction. 
    Consider the base case, $k=1$. 
    
    \noindent Since $\nu\geq (\gamma+1)(F(\bfw_1)-F_\ast)$ and $\nu\geq \frac{\beta^2 \hat{L}K}{2(\beta\hat{c}\mu-1)}$, it follows that $\mathbb{E}[F(\bfw_1)-F_\ast]\leq \frac{\nu}{\gamma+1}$.
    
    Now, we assume that (\ref{eq:13}) holds for some $k\geq 1$. Thus
    \begin{align*}
        \mathbb{E}[F(\bfw_{k+1})-F_\ast] &\leq (1-\alpha_k\hat{c}\mu)\mathbb{E}[F(\bfw_k)-F_\ast] + \frac{1}{2} \alpha_k^2\hat{L}K \\
        &\leq (1-\alpha_k\hat{c}\mu) \frac{\nu}{\gamma+k} + \frac{1}{2}\alpha_k^2\hat{L}K \\
        &= \left( 1-\frac{\beta}{\gamma+k}\hat{c}\mu \right)\frac{\nu}{\gamma+k} + \frac{1}{2}\left( 
\frac{\beta}{\gamma+k} \right)^2 \hat{L}K \\
&= \left( 1-\frac{\beta \hat{c}\mu}{\Tilde{k}} \right) \frac{\nu}{\Tilde{k}} + \frac{\beta^2 \hat{L}K}{2\Tilde{k}^2} \\
&= \left( \frac{\Tilde{k}-1}{\Tilde{k}^2} \right)\nu - \left( \frac{\beta \hat{c}\mu-1}{\Tilde{k}^2} \right) \nu + \frac{\beta^2 \hat{L}K}{2\Tilde{k}^2}
    \end{align*}
    where $\Tilde{k}:=\gamma+k$. Note that $\left( \frac{\beta \hat{c}\mu-1}{\Tilde{k}^2} \right) \nu - \frac{\beta^2 \hat{L}K}{2\Tilde{k}^2} \geq 0$ since $\nu\geq \frac{\beta^2 \hat{L}K}{2(\beta \hat{c}\mu-1)}$.

    Thus, 
    \[\mathbb{E}[F(\bfw_{k+1})-F_\ast] \leq \left( \frac{\Tilde{k}-1}{\Tilde{k}^2} \right)\nu - \left( \frac{\beta \hat{c}\mu-1}{\Tilde{k}^2} \right)\nu + \frac{\beta \hat{L}K}{2\Tilde{k}^2} \stackrel{\dagger}{\leq}\frac{\nu}{\Tilde{k}+1}\]
    where $(\dagger)$ follows since $\Tilde{k}^2\geq (\Tilde{k}+1)(\Tilde{k}-1)$.
\end{proof}

\subsubsection{Proof of Lemma~\ref{lem:onestep}}
\begin{proof}
Fix $k\le T-1$ and assume $\bw_k\in\mathcal N_r$, i.e. $\mathrm{dist}_{\bfM}(\bw_k,\mathcal S)\le r$.
If $\bw_{k+1}\notin\mathcal N_{r_+}$ then $\mathrm{dist}_{\bfM}(\bw_{k+1},\mathcal S)>r_+=r+\Delta$.
By the triangle inequality,
\[
\mathrm{dist}_{\bfM}(\bw_{k+1},\mathcal S)
\le \mathrm{dist}_{\bfM}(\bw_k,\mathcal S)+\|\bw_{k+1}-\bw_k\|_{\bfM}
\le r+\|\bw_{k+1}-\bw_k\|_{\bfM},
\]
hence $\|\bw_{k+1}-\bw_k\|_{\bfM}>\Delta$.
Using $\bw_{k+1}-\bw_k=-\alpha_k\bfM^{-1}g_k$ we have
$\|\bw_{k+1}-\bw_k\|_{\bfM}=\alpha_k\|g_k\|_{\bfM^{-1}}$, so
\[
\mathbb P(\bw_{k+1}\notin\mathcal N_{r_+}\mid\mathcal F_k)
\le
\mathbb P(\alpha_k\|g_k\|_{\bfM^{-1}}>\Delta\mid\mathcal F_k).
\]
Markov's inequality and Assumption~\ref{assumption:onestep} yield
\[
\mathbb P(\alpha_k\|g_k\|_{\bfM^{-1}}>\Delta\mid\mathcal F_k)
\le
\frac{\alpha_k^2\,\mathbb E[\|g_k\|_{\bfM^{-1}}^2\mid\mathcal F_k]}{\Delta^2}
\le \delta_k.
\]
\end{proof}

\subsubsection{Proof of Theorem~\ref{thm:3rdmainthm}}
\begin{proof}
Fix $\alpha_k=\overline\alpha$ and let $\mathcal F_k:=\sigma(\boldsymbol\xi_1,\dots,\boldsymbol\xi_{k-1})$.
Write $g_k:=g(\bw_k,\boldsymbol\xi_k)$ and define
\[
\tau:=\inf\{k\ge 1:\ \bw_k\notin\mathcal N_r\},\qquad \Omega_T:=\{\tau>T\}.
\]

Fix $k\le T-1$ and work on $\Omega_T$. Then $\bw_k,\bw_{k+1}\in\mathcal N_r\subset\mathcal N_{r_+}\subset\mathcal V$.
By convexity of $\mathcal V$, the segment $[\bw_k,\bw_{k+1}]\subset\mathcal V$, and by
Assumption~\ref{assumption:localL-PL} (local $\bfM$--smoothness),
\begin{equation}\label{eq:descent-on-OT-fixed}
F(\bw_{k+1})
\le
F(\bw_k)
-\overline\alpha\,\nabla F(\bw_k)^\top\bfM^{-1}g_k
+\frac{\hat L}{2}\overline\alpha^2\|g_k\|_{\bfM^{-1}}^2
\qquad \text{on }\Omega_T.
\end{equation}
Taking conditional expectation given $(\mathcal F_k,\Omega_T)$ and using
the conditional-moment version of Assumption~\ref{assumption:localNoise-PL} on $\Omega_T$
  yields
\begin{align*}
\mathbb E\!\left[F(\bw_{k+1})-F_\ast \,\middle|\,\mathcal F_k,\Omega_T\right]
&\le
(F(\bw_k)-F_\ast)
-\overline\alpha\,\mu\,\|\nabla F(\bw_k)\|_{\bfM^{-1}}^2 \\
&\quad+
\frac{\hat L}{2}\overline\alpha^2\Bigl(K_G\|\nabla F(\bw_k)\|_{\bfM^{-1}}^2+K\Bigr).
\end{align*}
Using $\overline\alpha \le \mu/(\hat L K_G)$ gives
$\overline\alpha\mu-\frac{\hat L}{2}\overline\alpha^2K_G\ge \frac{\mu}{2}\overline\alpha$, hence
\begin{equation}\label{eq:descent-OT-2-fixed}
\mathbb E\!\left[F(\bw_{k+1})-F_\ast \,\middle|\,\mathcal F_k,\Omega_T\right]
\le
(F(\bw_k)-F_\ast)
-\tfrac{\mu}{2}\overline\alpha\,\|\nabla F(\bw_k)\|_{\bfM^{-1}}^2
+\frac{\hat L}{2}\overline\alpha^2K.
\end{equation}
On $\Omega_T$ we have $\bw_k\in\mathcal N_r$, so Assumption~\ref{assumption:localPL} implies
$\|\nabla F(\bw_k)\|_{\bfM^{-1}}^2\ge 2\hat\mu_{\mathrm{PL}}(F(\bw_k)-F_\ast)$.
Substituting into \eqref{eq:descent-OT-2-fixed} gives
\[
\mathbb E\!\left[F(\bw_{k+1})-F_\ast \,\middle|\,\mathcal F_k,\Omega_T\right]
\le
(1-\rho)\,(F(\bw_k)-F_\ast)+\rho C,
\]
with $\rho:=\overline\alpha\hat\mu_{\mathrm{PL}}\mu\in(0,1)$ and
$C:=\frac{\overline\alpha\hat L K}{2\hat\mu_{\mathrm{PL}}\mu}$.
Taking expectations under $\mathbb P(\cdot\mid\Omega_T)$ and defining
$x_k:=\mathbb E[F(\bw_k)-F_\ast\mid\Omega_T]$ yields for $k\le T-1$,
\[
x_{k+1}\le (1-\rho)x_k+\rho C.
\]
Iterating gives, for all $1\le k\le T$,
\[
x_k
\le
C+(1-\rho)^{k-1}\bigl(F(\bw_1)-F_\ast-C\bigr),
\]
which is the desired conditional geometric bound.

Define overshoot events
\[
A_k:=\{\bw_k\in\mathcal N_r,\ \bw_{k+1}\notin\mathcal N_{r_+}\},\qquad k=1,\dots,T-1,
\]
and the no-overshoot event $\mathcal E_T:=\bigcap_{k=1}^{T-1}A_k^c$.
By Lemma~\ref{lem:onestep}, $\mathbb P(A_k)\le \delta_k$, hence by the union bound
\begin{equation}\label{eq:ETc-fixed}
\mathbb P(\mathcal E_T^c)\le \sum_{k=1}^{T-1}\delta_k.
\end{equation}

Let $\sigma:=\tau\wedge T$. On $\mathcal E_T\cap\{\tau\le T\}$ we have
$\bw_\tau\in\mathcal N_{r_+}\setminus\mathcal N_r$, hence by Assumption~\ref{assumption:localQG},
\[
F(\bw_\tau)-F_\ast\ge B:=\frac{\alpha_{\rm QG}}{2}r^2.
\]
Since $\bw_\sigma=\bw_\tau$ on $\{\tau\le T\}$,
\[
B\,\mathbf 1_{\{\tau\le T\}}\mathbf 1_{\mathcal E_T}
\le
(F(\bw_\sigma)-F_\ast)\,\mathbf 1_{\mathcal E_T}.
\]
Taking expectations gives
\begin{equation}\label{eq:barrier-lb-fixed}
B\,\mathbb P(\tau\le T,\mathcal E_T)
\le
\mathbb E\!\left[(F(\bw_\sigma)-F_\ast)\mathbf 1_{\mathcal E_T}\right].
\end{equation}

We upper bound the RHS of \eqref{eq:barrier-lb-fixed}.
For each $k=1,\dots,T-1$, define the prefix no-overshoot event
\[
\mathcal E_{k+1}:=\bigcap_{j=1}^{k}A_j^c,
\]
so that $\mathcal E_{k+1}\in\mathcal F_{k+1}$ and $\mathcal E_T\subseteq \mathcal E_{k+1}$.
On $\mathcal E_{k+1}\cap\{k<\tau\}$ we have $\bw_k\in\mathcal N_r$ and $\bw_{k+1}\in\mathcal N_{r_+}\subset\mathcal V$,
so by smoothness,
\[
F(\bw_{k+1})-F(\bw_k)
\le
-\overline\alpha\,\nabla F(\bw_k)^\top\bfM^{-1}g_k
+\frac{\hat L}{2}\overline\alpha^2\|g_k\|_{\bfM^{-1}}^2
\qquad \text{on }\mathcal E_{k+1}\cap\{k<\tau\}.
\]
Taking conditional expectation given $\mathcal F_k$ and using Assumption~\ref{assumption:localNoise-PL}
(valid on $\{k<\tau\}$ since then $\bw_k\in\mathcal N_r$) yields
\[
\mathbb E\!\left[F(\bw_{k+1})-F(\bw_k)\,\middle|\,\mathcal F_k\right]
\le
-\overline\alpha\mu\|\nabla F(\bw_k)\|_{\bfM^{-1}}^2
+\frac{\hat L}{2}\overline\alpha^2\bigl(K_G\|\nabla F(\bw_k)\|_{\bfM^{-1}}^2+K\bigr)
\le \frac{\hat L}{2}\overline\alpha^2K,
\]
where the last inequality uses that the first term is nonpositive and we drop it.

Now note that
\(
F(\bw_\sigma)-F(\bw_1)=\sum_{k=1}^{T-1}\bigl(F(\bw_{k+1})-F(\bw_k)\bigr)\mathbf 1_{\{k<\tau\}}
\)
and that on $\mathcal E_T$ we have $\mathcal E_T\subseteq \mathcal E_{k+1}$, hence the above bound applies on
$\mathcal E_T\cap\{k<\tau\}$ for every $k\le T-1$.
Therefore,
\begin{align*}
\mathbb E\!\left[(F(\bw_\sigma)-F(\bw_1))\mathbf 1_{\mathcal E_T}\right]
&=
\sum_{k=1}^{T-1}\mathbb E\!\left[(F(\bw_{k+1})-F(\bw_k))\mathbf 1_{\mathcal E_T}\mathbf 1_{\{k<\tau\}}\right]\\
&=
\sum_{k=1}^{T-1}\mathbb E\!\left[\mathbf 1_{\mathcal E_T}\mathbf 1_{\{k<\tau\}}
\mathbb E\!\left[F(\bw_{k+1})-F(\bw_k)\,\middle|\,\mathcal F_k\right]\right]\\
&\le
\sum_{k=1}^{T-1}\frac{\hat L}{2}\overline\alpha^2K
=
\frac{\hat L}{2}\overline\alpha^2K\,(T-1),
\end{align*}
which implies
\begin{equation}\label{eq:barrier-ub-fixed}
\mathbb E\!\left[(F(\bw_\sigma)-F_\ast)\mathbf 1_{\mathcal E_T}\right]
\le
(F(\bw_1)-F_\ast)+\frac{\hat L}{2}\overline\alpha^2K\,(T-1).
\end{equation}
Combining \eqref{eq:barrier-lb-fixed} and \eqref{eq:barrier-ub-fixed} yields
\[
\mathbb P(\tau\le T,\mathcal E_T)
\le
\frac{F(\bw_1)-F_\ast+\frac{\hat L}{2}\overline\alpha^2K\,(T-1)}{B}.
\]
Finally, using \eqref{eq:ETc-fixed},
\[
\mathbb P(\tau\le T)
\le
\mathbb P(\tau\le T,\mathcal E_T)+\mathbb P(\mathcal E_T^c)
\le
\frac{F(\bw_1)-F_\ast+\frac{\hat L}{2}\overline\alpha^2K\,(T-1)}{B}
+\sum_{k=1}^{T-1}\delta_k,
\]
and rearranging gives the stated lower bound on $\mathbb P(\tau>T)$ (with truncation at $0$).
\end{proof}

\subsubsection{Proof of Theorem~\ref{thm:4thmainthm}}
\begin{proof}
Let $\mathcal F_k:=\sigma(\boldsymbol\xi_1,\dots,\boldsymbol\xi_{k-1})$, set $\alpha_k=\beta/(\gamma+k)$, and write
$g_k:=g(\bw_k,\boldsymbol\xi_k)$. Define $\tau:=\inf\{k\ge1:\bw_k\notin\mathcal N_r\}$,
$\Omega_T:=\{\tau>T\}$, and $S_k:=F(\bw_k)-F_\ast$.

Fix $k\le T-1$ and work on $\Omega_T$. Then $\bw_k,\bw_{k+1}\in\mathcal N_r\subset\mathcal N_{r_+}\subset\mathcal V$.
Since $\mathcal V$ is convex, $[\bw_k,\bw_{k+1}]\subset\mathcal V$ and Assumption~\ref{assumption:localL-PL}
implies the $\bfM$--smoothness inequality:
\[
F(\bw_{k+1})
\le
F(\bw_k)
-\alpha_k\,\nabla F(\bw_k)^\top\bfM^{-1}g_k
+\frac{\hat L}{2}\alpha_k^2\|g_k\|_{\bfM^{-1}}^2
\qquad\text{on }\Omega_T.
\]
Take conditional expectation given $\mathcal F_k$ and using Assumption~\ref{assumption:localNoise-PL}
(valid on $\{k<\tau\}$ since then $\bw_k\in\mathcal N_r$) yields:
\begin{align*}
\mathbb E[S_{k+1}\mid \mathcal F_k,\Omega_T]
&\le
S_k
-\alpha_k\mu\,\|\nabla F(\bw_k)\|_{\bfM^{-1}}^2
+\frac{\hat L}{2}\alpha_k^2\Bigl(K_G\|\nabla F(\bw_k)\|_{\bfM^{-1}}^2+K\Bigr).
\end{align*}
Because $\alpha_k\le \alpha_1=\beta/(\gamma+1)\le \mu/(\hat L K_G)$, we have
$\mu\alpha_k-\frac{\hat L}{2}\alpha_k^2K_G\ge \frac{\mu}{2}\alpha_k$, hence
\[
\mathbb E[S_{k+1}\mid \mathcal F_k,\Omega_T]
\le
S_k-\frac{\mu}{2}\alpha_k\|\nabla F(\bw_k)\|_{\bfM^{-1}}^2
+\frac{\hat L}{2}\alpha_k^2K.
\]
On $\Omega_T$ we have $\bw_k\in\mathcal N_r$, so Assumption~\ref{assumption:localPL} yields
$\|\nabla F(\bw_k)\|_{\bfM^{-1}}^2\ge 2\hat\mu_{\mathrm{PL}}S_k$. Therefore, with
$m:=\mu\hat\mu_{\mathrm{PL}}$ and $c:=\hat L K/2$,
\[
\mathbb E[S_{k+1}\mid \mathcal F_k,\Omega_T]\le (1-m\alpha_k)S_k+c\alpha_k^2.
\]
Now take expectation under $\mathbb P(\cdot\mid\Omega_T)$ and define
$x_k:=\mathbb E[S_k\mid\Omega_T]$. Then for all $k\le T-1$,
\[
x_{k+1}\le (1-m\alpha_k)x_k+c\alpha_k^2.
\]
Substituting $\alpha_k=\beta/(\gamma+k)$ gives
\[
x_{k+1}\le \Bigl(1-\frac{a}{\gamma+k}\Bigr)x_k+\frac{b}{(\gamma+k)^2},
\qquad a:=\beta m,\quad b:=c\beta^2.
\]
Since $\beta>2/(\hat\mu_{\mathrm{PL}}\mu)$, we have $a>1$. Let
\[
\nu:=\max\Bigl\{\frac{b}{a-1},\,(\gamma+1)x_1\Bigr\},
\qquad x_1=F(\bw_1)-F_\ast.
\]
We prove by induction that $x_k\le \nu/(\gamma+k)$ for $1\le k\le T$.
The base case holds because $x_1\le \nu/(\gamma+1)$ by definition of $\nu$.
Assuming $x_k\le \nu/(\gamma+k)$, we obtain
\[
x_{k+1}\le \Bigl(1-\frac{a}{\gamma+k}\Bigr)\frac{\nu}{\gamma+k}+\frac{b}{(\gamma+k)^2}
=\frac{\nu}{\gamma+k}+\frac{b-a\nu}{(\gamma+k)^2}.
\]
Using $\nu\ge b/(a-1)$ implies $b-a\nu\le -\nu$, hence
\[
x_{k+1}\le \frac{\nu}{\gamma+k}-\frac{\nu}{(\gamma+k)^2}
\le \frac{\nu}{\gamma+k}-\frac{\nu}{(\gamma+k)(\gamma+k+1)}
=\frac{\nu}{\gamma+k+1}.
\]
Thus $x_k\le \nu/(\gamma+k)$ for all $1\le k\le T$, i.e.
\[
\mathbb E[F(\bw_k)-F_\ast\mid \Omega_T]\le \frac{\nu}{\gamma+k},\qquad 1\le k\le T.
\]

Define overshoot events $A_k:=\{\bw_k\in\mathcal N_r,\ \bw_{k+1}\notin\mathcal N_{r_+}\}$ for $k=1,\dots,T-1$
and $\mathcal E_T:=\bigcap_{k=1}^{T-1}A_k^c$. By Lemma~\ref{lem:onestep}, $\mathbb P(A_k)\le \delta_k$, hence
\[
\mathbb P(\mathcal E_T^c)\le \sum_{k=1}^{T-1}\delta_k.
\]
Let $\sigma:=\tau\wedge T$. On $\mathcal E_T\cap\{\tau\le T\}$ we have $\bw_\tau\in \mathcal N_{r_+}\setminus\mathcal N_r$,
so Assumption~\ref{assumption:localQG} yields
\[
F(\bw_\tau)-F_\ast\ge B:=\frac{\alpha_{\rm QG}}{2}r^2.
\]
Since $\bw_\sigma=\bw_\tau$ on $\{\tau\le T\}$, it follows that
\[
B\,\mathbf 1_{\{\tau\le T\}}\mathbf 1_{\mathcal E_T}
\le (F(\bw_\sigma)-F_\ast)\mathbf 1_{\mathcal E_T}.
\]
Taking expectations gives
\[
B\,\mathbb P(\tau\le T,\mathcal E_T)\le \mathbb E[(F(\bw_\sigma)-F_\ast)\mathbf 1_{\mathcal E_T}].
\]

We upper bound the right-hand side by telescoping. For $k=1,\dots,T-1$, define the prefix event
$\mathcal E_k:=\bigcap_{j=1}^{k-1}A_j^c$ (so $\mathcal E_k\in\mathcal F_k$ and $\mathcal E_T\subseteq \mathcal E_k$).
On $\mathcal E_k\cap\{k<\tau\}$ we have $\bw_k\in\mathcal N_r$ and $\bw_{k+1}\in\mathcal N_{r_+}\subset\mathcal V$,
so the smoothness inequality and Assumption~\ref{assumption:localNoise-PL} imply
\[
\mathbb E[F(\bw_{k+1})-F(\bw_k)\mid \mathcal F_k]\le c\,\alpha_k^2
\qquad\text{on }\mathcal E_k\cap\{k<\tau\},
\]
using again $\alpha_k\le \mu/(\hat L K_G)$ to drop the (nonpositive) gradient-dependent part.
Multiplying by $\mathbf 1_{\mathcal E_T}\mathbf 1_{\{k<\tau\}}$ and taking expectations yields
\[
\mathbb E[(F(\bw_{k+1})-F(\bw_k))\,\mathbf 1_{\mathcal E_T}\mathbf 1_{\{k<\tau\}}]\le c\,\alpha_k^2.
\]
Summing over $k=1,\dots,T-1$ and using
$F(\bw_\sigma)-F(\bw_1)=\sum_{k=1}^{T-1}(F(\bw_{k+1})-F(\bw_k))\mathbf 1_{\{k<\tau\}}$
gives
\[
\mathbb E[(F(\bw_\sigma)-F(\bw_1))\mathbf 1_{\mathcal E_T}]
\le c\sum_{k=1}^{T-1}\alpha_k^2,
\]
hence
\[
\mathbb E[(F(\bw_\sigma)-F_\ast)\mathbf 1_{\mathcal E_T}]
\le (F(\bw_1)-F_\ast)+c\sum_{k=1}^{T-1}\alpha_k^2.
\]
Therefore,
\[
\mathbb P(\tau\le T,\mathcal E_T)
\le
\frac{F(\bw_1)-F_\ast+c\sum_{k=1}^{T-1}\alpha_k^2}{B}.
\]
Finally,
\[
\mathbb P(\tau>T)
\ge 1-\mathbb P(\tau\le T,\mathcal E_T)-\mathbb P(\mathcal E_T^c)
\ge
1-\frac{F(\bw_1)-F_\ast+c\sum_{k=1}^{T-1}\alpha_k^2}{B}
-\sum_{k=1}^{T-1}\delta_k,
\]
and truncation gives the $\max\{0,\cdot\}$ form.
\end{proof}

\section{Numerical experiments}\label{appendix:experiments}

\subsection{Implementation details}

The algorithms in this paper were implemented in Python using \texttt{jax} (version 0.5.0), \texttt{flax} (version 0.10.0), and \texttt{optax} (version 0.2.4). All timing results reported in Section~\ref{sec:experiments} were measured on a consistent hardware platform running Ubuntu 24.04.2 LTS, equipped with an Intel(R) Core(TM) i7-12700K CPU (8 Performance-cores @ 3.60 GHz and 4 Efficient-cores @ 2.70 GHz), and 64 GB of system memory. All experiments were executed in double precision arithmetic to ensure numerical stability for the challenging SciML problems.

\subsection{Baseline methods and experimental setting}\label{appendix:experiments-baseline}

Our experiments evaluated several optimization algorithms to validate our theoretical analysis of preconditioning effects. 
We implemented vanilla SGD, SGD with momentum ($\beta = 0.9$), and the preconditioned methods using GGN and Hessian approximations. The goal was to minimize the mean square error loss. For plotting purposes, we normalized the training losses so that the first loss was recorded as 1.0.
The preconditioned methods employ conjugate gradient to efficiently approximate matrix-vector products with the inverse preconditioner, avoiding the prohibitive cost of explicitly forming and inverting the full matrices. This approach provides a computationally tractable way to incorporate curvature information into the optimization process.
For Adam (with $\beta_1=0.9$, $\beta_2=0.999$) and L-BFGS (with memory size $100$ and maximum line search of $100$ steps), we utilized the implementations available in the \texttt{optax} library.

Our experimental protocol employed a structured two-phase optimization strategy. Phase~I uses Adam to reach a comparable local basin; Phase~II switches to the target optimizer to isolate late-stage behavior. Because our nonconvex theory is local, the basin reached at the end of Phase~I can influence the local constants \((\hat L,\hat\mu_{\mathrm{PL}},K)\) encountered in Phase~II and hence may affect which optimizer performs best after the switch. We therefore use the same Adam warm start, switch point, architecture, and seed protocol across all methods to control for basin selection and interpret the Phase~II results as comparisons conditional on entering a comparable basin rather than fully basin-agnostic rankings. This established a common starting point in the optimization landscape and helped navigate past initial high-gradient regions. In Phase II, we transitioned to the respective optimization methods for direct performance comparison. The specific duration of each phase varied by task complexity and is detailed in the respective experimental sections.

We individually optimized learning rates for each method-task combination through grid search, deliberately omitting learning rate schedulers to isolate the inherent convergence properties of each optimizer. For Adam, we searched within the range $\{0.001,0.0005,0.0002,0.0001,\ldots,0.00001\}$. The preconditioned methods required different learning rate ranges due to their curvature properties: CG-Hessian and CG-GGN used $\{1.0,0.5,\ldots,0.001\}$. This difference reflects our theoretical analysis that effective preconditioning can support larger learning rates when operating near local minima. For vanilla SGD and momentum SGD, we initially explored the same ranges as Adam and expanded to wider intervals when necessary to ensure optimal performance. This methodology ensured a fair comparison by allowing each optimizer to operate at its most effective learning rate for each specific task.

To ensure robust experimental results, we conducted each experiment five times using different random seeds ($42$ to $46$ for Phase I and $43$ to $47$ for Phase II). This approach accounts for the inherent stochasticity in neural network training processes and allows us to report mean performance metrics. For our timing analysis, we implemented a precise measurement protocol that isolates the computational efficiency of the optimization methods themselves. Specifically, we excluded all data generation and preprocessing overhead, capturing only the cumulative duration of the actual training iterations on identical hardware configurations. This methodology provides an equitable assessment of computational efficiency, particularly important when comparing methods with substantially different per-iteration costs, such as first-order methods versus preconditioned approaches that require conjugate gradient iterations.

\subsection{Noisy data regression}

For the Franke function regression experiment, we used a neural network with two hidden layers of $50$ neurons each and ReLU activation functions. We resampled the dataset every epoch, generating $256$ points with additive Gaussian noise as described in Section~\ref{sec:datasets} and illustrated in the left panel of Figure~\ref{fig:franke dataset}. For the preconditioned methods, we employed $5$ conjugate gradient iterations. The right panel of Figure~\ref{fig:franke dataset} extends our main results by displaying not only the mean performance across $5$ independent runs but also the variance bands for each optimization method.

\begin{figure}[htbp]
    \centering
    \includegraphics[width=0.32\linewidth]{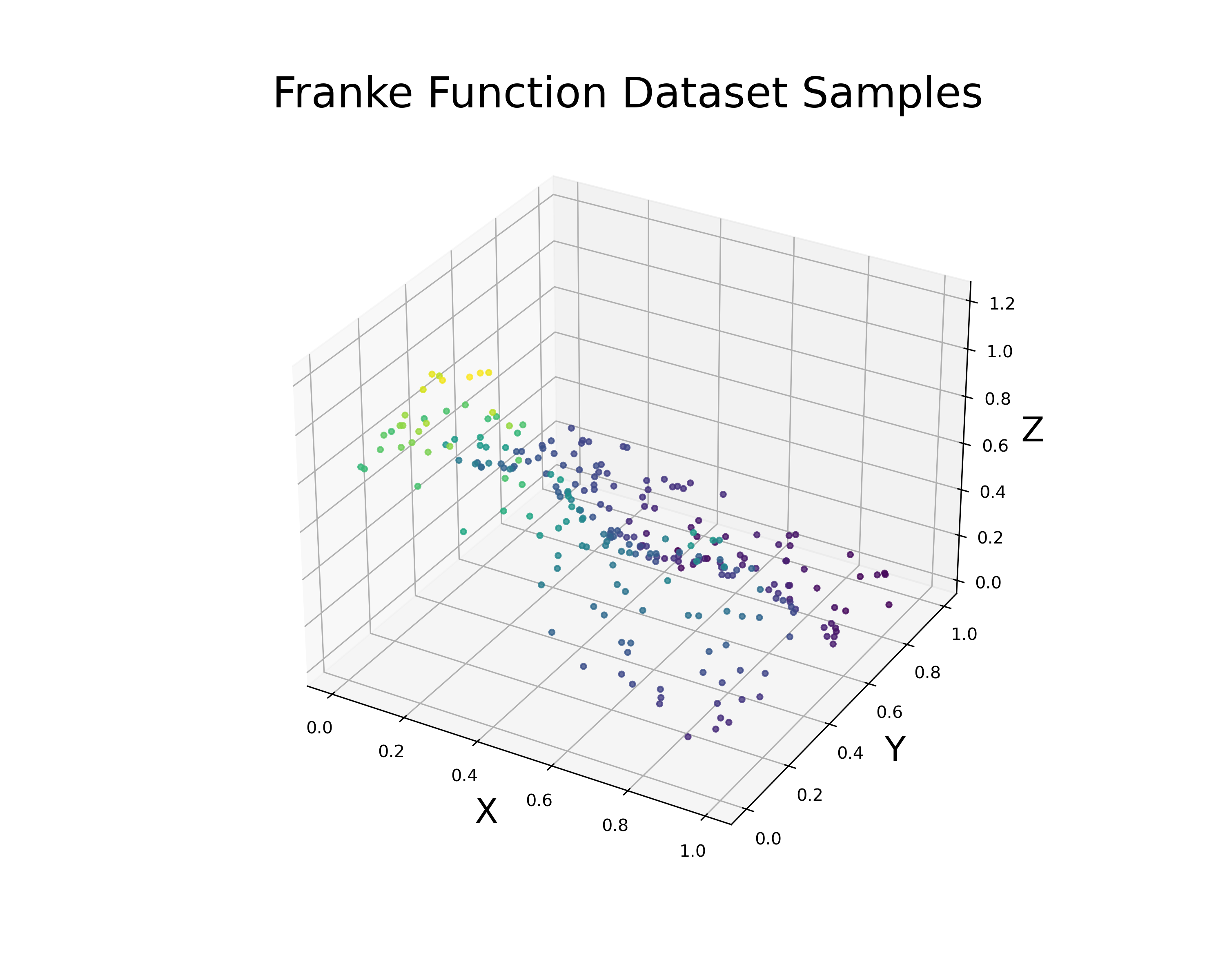}
    \includegraphics[width=0.32\linewidth]{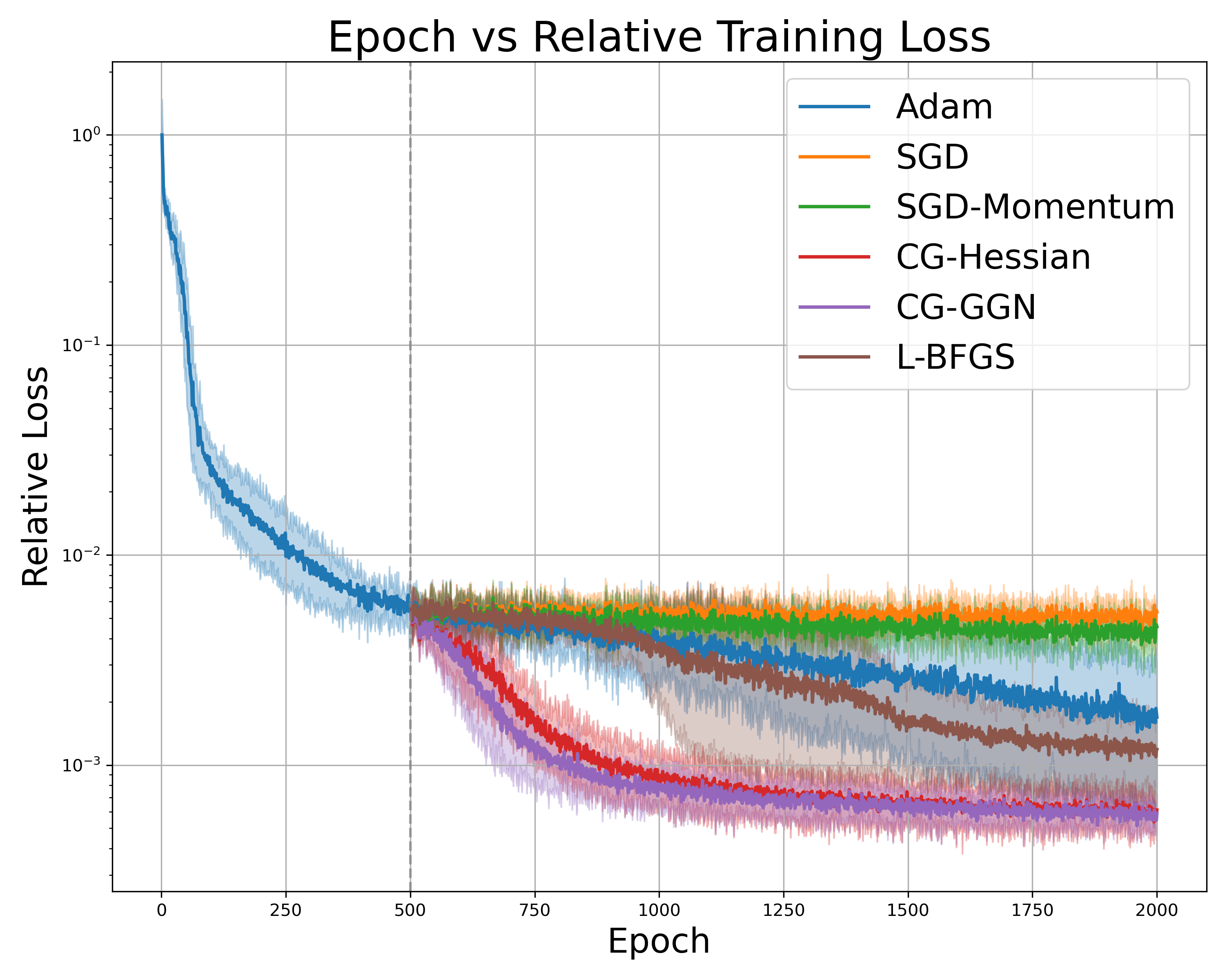}
    \caption{Left: Visualization of the Franke function dataset sampling. Right: Franke function regression performance averaged over $5$ independent runs. Left: Training loss versus epochs with Phase I transitioning to Phase II at epoch $500$ with variance.}
    \label{fig:franke dataset}
\end{figure}

\subsection{Physics-informed neural networks}

For solving the Poisson equation with PINNs, we used a neural network with two hidden layers of $50$ neurons each and tanh activation functions. We resampled the dataset every epoch, generating $1,000$ points within the domain and $200$ points on the boundary, as described in Appendix~\ref{sec:datasets} and illustrated in the left panel of Figure~\ref{fig:pinns dataset}. For the preconditioned methods, we employed $20$ conjugate gradient iterations. The right panel of Figure~\ref{fig:pinns dataset} shows that the mean loss trajectory is accompanied by a tight variance envelope across $5$ independent runs.

\begin{figure}[htbp]
    \centering
    \includegraphics[width=0.32\linewidth]{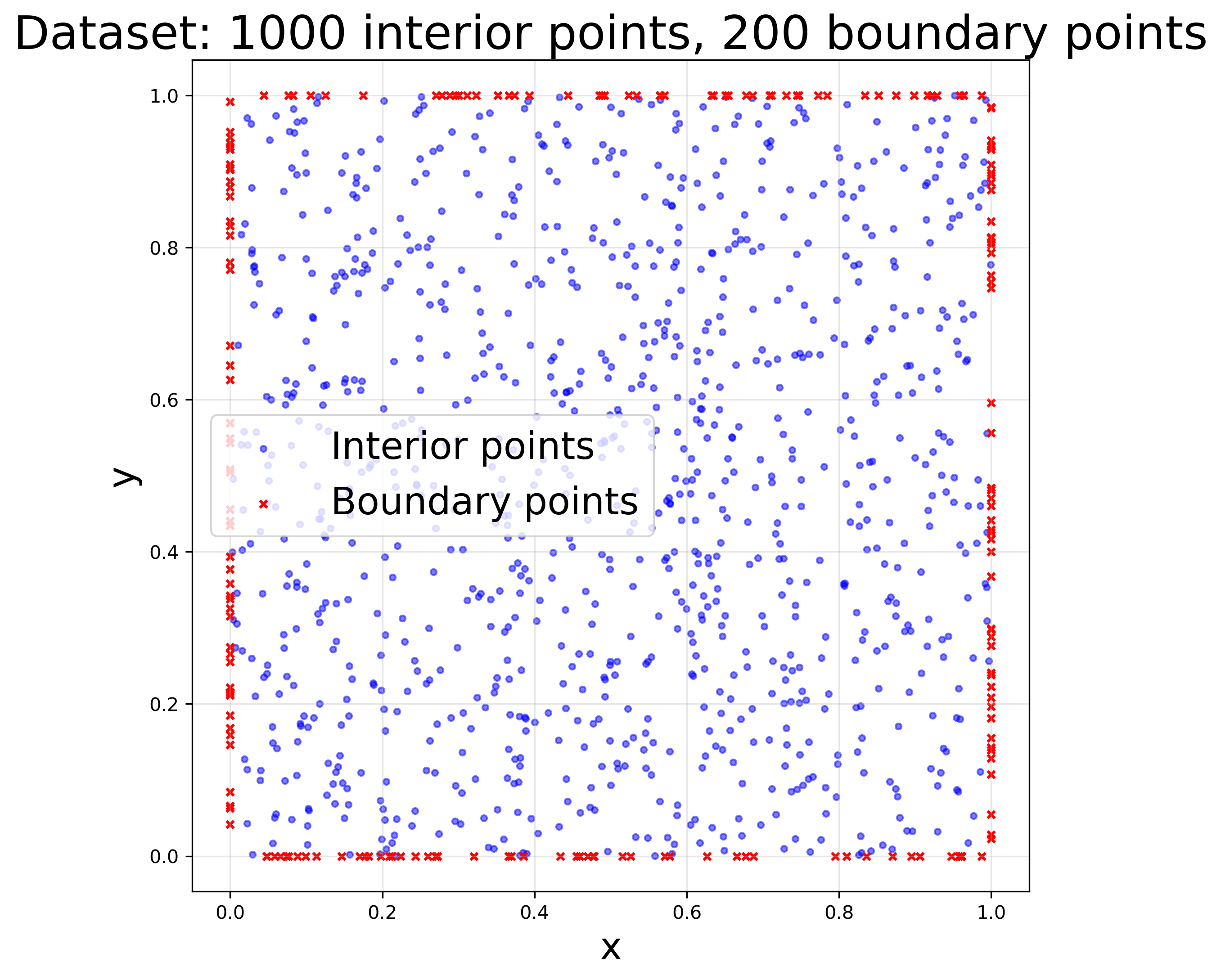}
    \includegraphics[width=0.32\linewidth]{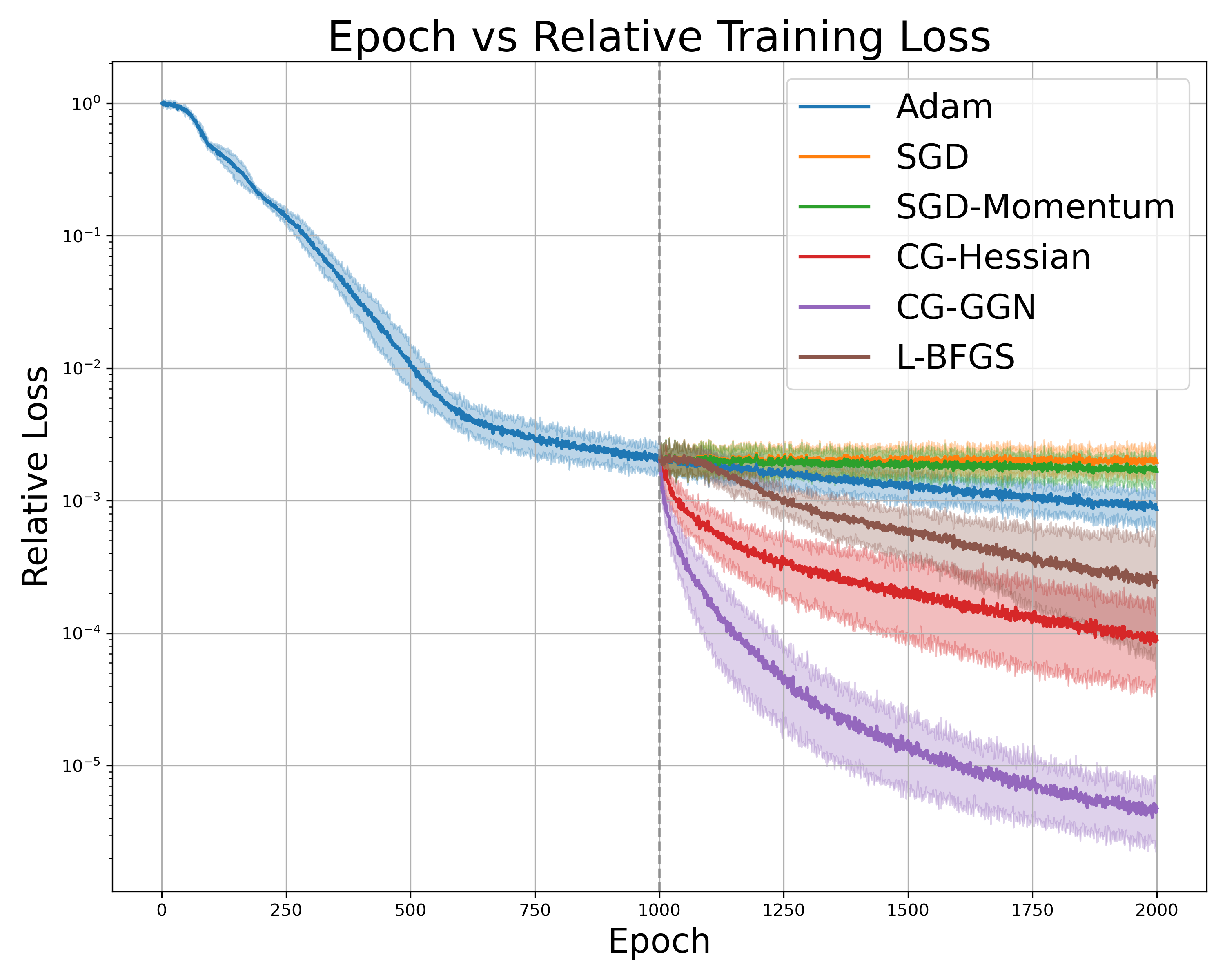}
    \caption{Left: Visualization of the sampling strategy for the 2D Poisson equation PINNs. The plot shows the distribution of $1,000$ collocation points within the domain (blue) and $200$ points along the boundary (red) used for enforcing the PDE and boundary conditions respectively. Right: Poisson equation PINNs performance averaged over 5 independent runs. Training loss versus epochs with Phase I transitioning to Phase II at epoch $1,000$ with variance.}
    \label{fig:pinns dataset}
\end{figure}

\subsection{Green's function learning}

\begin{figure}[htbp]
    \centering
    \includegraphics[width=0.5\linewidth]{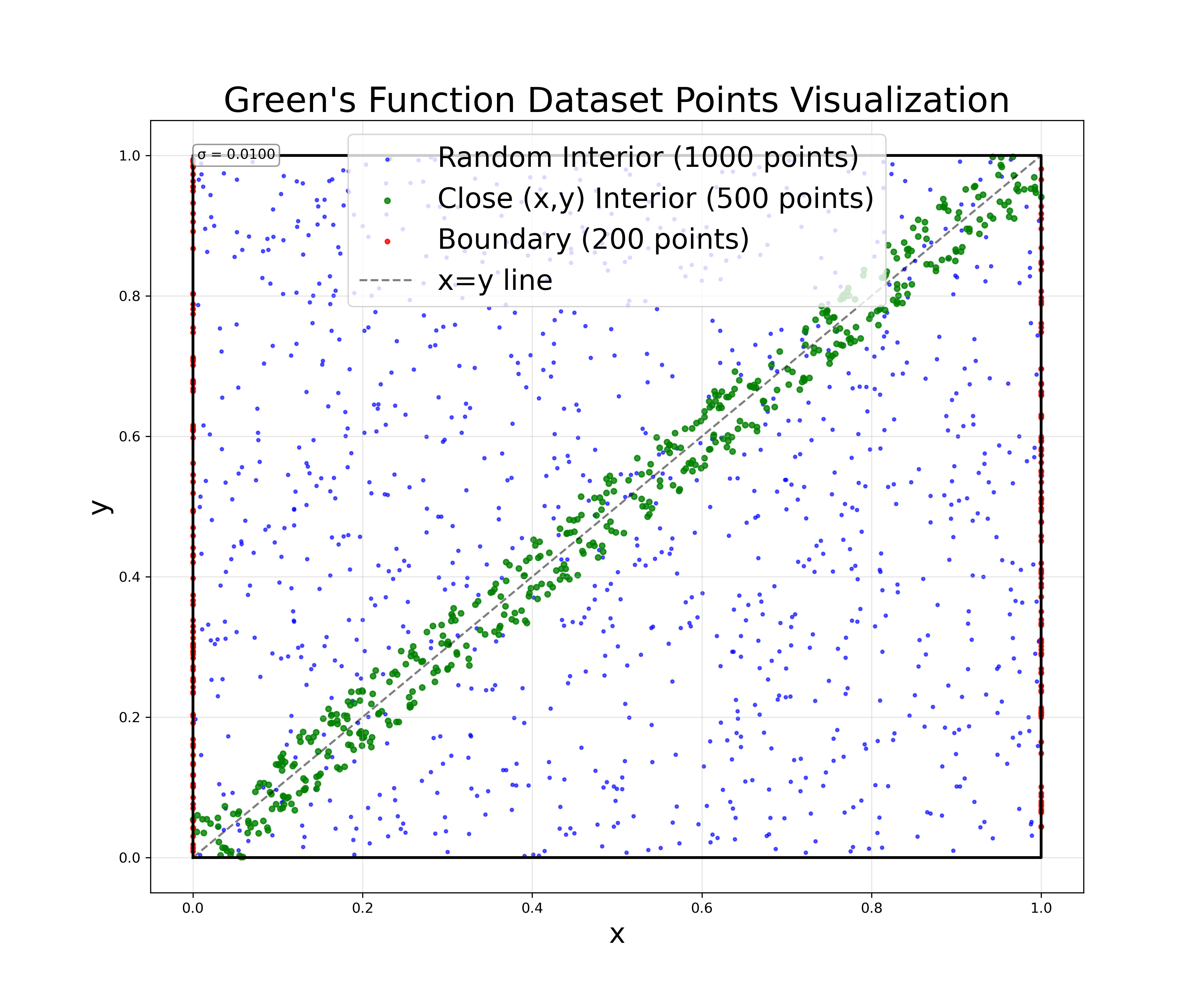}
    \caption{Visualization of the sampling strategy for Green's function learning. The plot shows three categories of training points: randomly distributed interior points (blue, $1,000$ points), points concentrated near the diagonal where $x$ is close to $y$ (green, $500$ points) to capture the near-singularity behavior characteristic of Green's functions, and boundary points (red, $200$ points) used to enforce homogeneous Dirichlet boundary conditions. }
    \label{fig:green dataset}
\end{figure}

For both cases in the Green's function experiments, we used a neural network with five hidden layers of $20$ neurons each and tanh activation functions. We resampled the dataset every epoch, generating $1,000$ points within the domain, $500$ points such that $x$ is close to $y$, and $200$ points on the boundary. For the preconditioned methods, we employed $20$ conjugate gradient iterations. Figure~\ref{fig:green_var} extends our main results by displaying not only the mean performance across $5$ independent runs but also the variance bands for each optimization method.

\begin{figure}[htbp]
        \centering
        \includegraphics[width=0.32\linewidth]{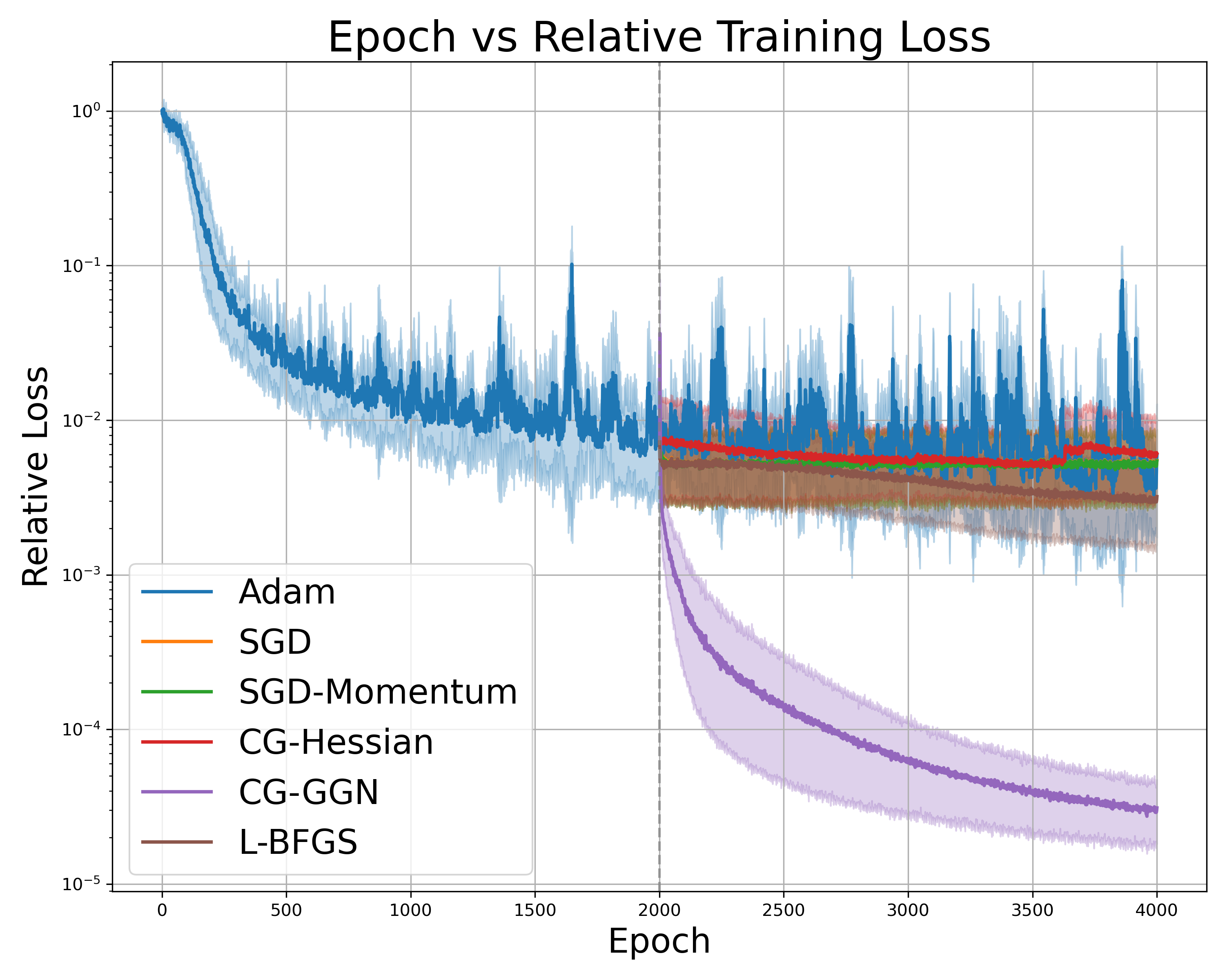}
        \includegraphics[width=0.32\linewidth]{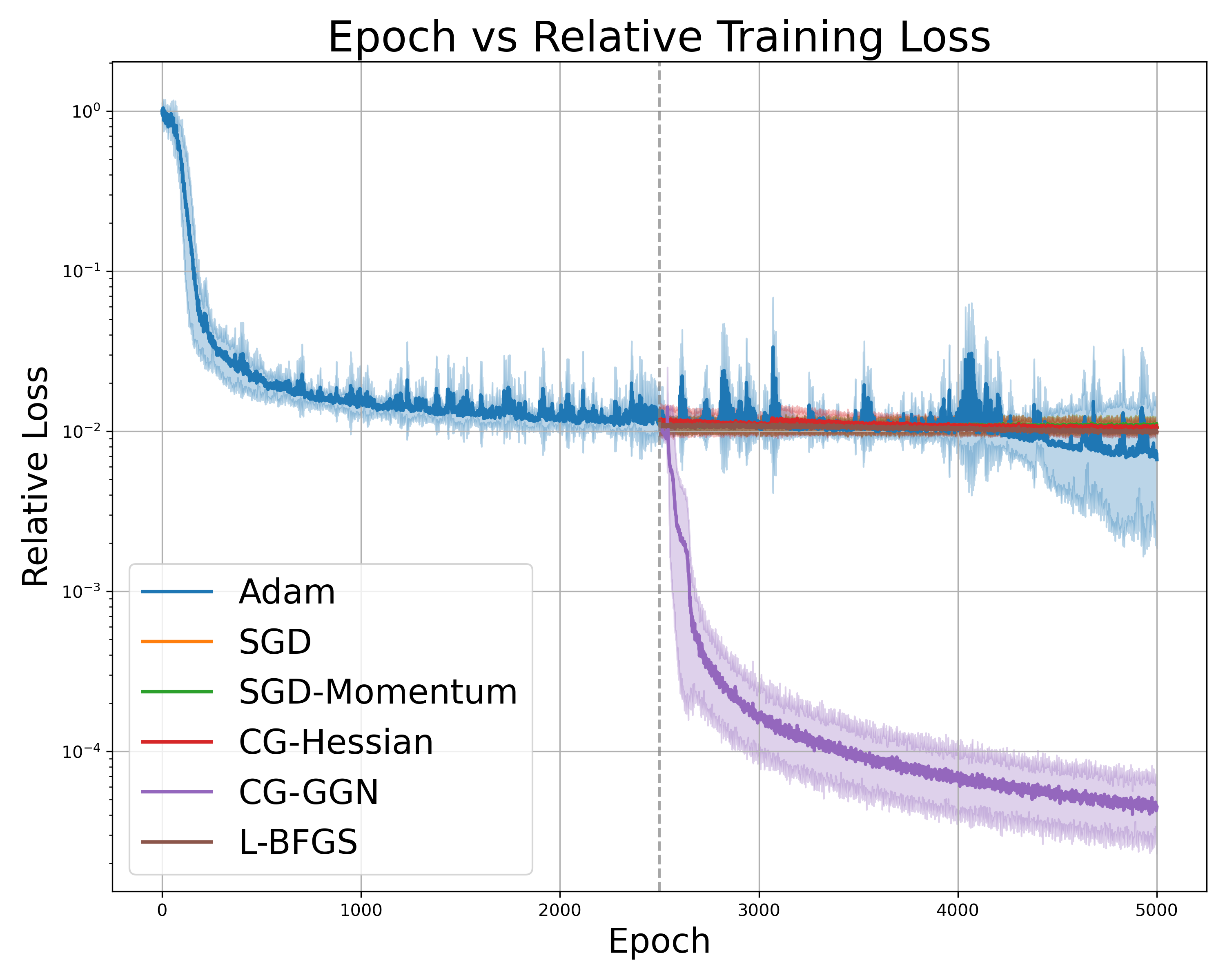}
    \caption{Green's function learning performance averaged over $5$ independent runs. Left: Training loss versus epochs with Phase I transitioning to Phase II at epoch $2,000$ with variance for Laplacian. Right: Training loss versus epochs with Phase I transitioning to Phase II at epoch $2,500$ with variance for convection-diffusion.}
    \label{fig:green_var}
\end{figure}

\end{document}